% This is samplepaper.tex, a sample chapter demonstrating the
% LLNCS macro package for Springer Computer Science proceedings;
% Version 2.20 of 2017/10/04
%
\documentclass[runningheads]{llncs}
\usepackage{graphicx}
% Used for displaying a sample figure. If possible, figure files should
% be included in EPS format.
%
% If you use the hyperref package, please uncomment the following line
% to display URLs in blue roman font according to Springer's eBook style:
% \renewcommand\UrlFont{\color{blue}\rmfamily}
\usepackage{enumitem}
\usepackage{amsmath}
\usepackage{amssymb}
\usepackage{tikz}
\usetikzlibrary{positioning}

\newcommand{\dom}{\textbf{Dom}}
\newcommand{\tuple}{\vec}
\newcommand {\const} [1][\cdot]{=\!\!(#1)}

\newcommand {\A}{\mathfrak A}
\newcommand {\M}{\mathfrak M}

\newcommand {\All}{\textbf{All}}
\newcommand {\cneg}{\sim \!\!}

\newcommand{\FO}{\mathbf{FO}}
\newcommand{\D}{\mathbf D}
\newcommand{\E}{\mathbf E}
\newcommand{\F}{\mathbf F}

\newcommand{\DD}{\mathcal D}

\begin{document}
\title{Doubly Strongly First Order Dependencies}
\titlerunning{Doubly Strongly First Order Dependencies}
% If the paper title is too long for the running head, you can set
% an abbreviated paper title here
%
\author{Pietro Galliani}
\authorrunning{P. Galliani}
% First names are abbreviated in the running head.
% If there are more than two authors, 'et al.' is used.
%
\institute{Free University of Bozen-Bolzano\\
Universit\"atsplatz 1 - Piazza Universit\'a, 1
Italy - 39100, Bozen-Bolzano
\email{Pietro.Galliani@unibz.it}}
\maketitle              % typeset the header of the contribution
\begin{abstract}
Team Semantics is a generalization of Tarskian Semantics that can be used to add to First Order Logic atoms and connectives expressing dependencies between the possible values of variables. Some of these extensions are more expressive than First Order Logic, while others are reducible to it. In this work, I fully characterize the (relativizable) atoms and families of atoms that do not increase the expressive power of First Order Logic with Team Semantics when they and their negations are added to it, separately or jointly (or, equivalently, when they are added to First Order Logic augmented with a contradictory negation connective applicable only to literals and dependency atoms). 
\keywords{Team Semantics  \and Dependence Logic \and Second Order Logic.}
\end{abstract}

\section{Introduction}
Team Semantics generalizes Tarski's semantics for First Order Logic by letting formulas be satisfied or not satisfied by sets of assignments (called \emph{Teams}) rather than by single assignments. This semantics arises naturally from the analysis of the game theoretic semantics of First Order Logic and its extensions: in brief, a team represents a set of possible game states (= variable assignments) that can be reached at some subformula, and a team satisfies a subformula if the existential player has a strategy that is winning for the corresponding subgame for every starting assignment in the team.\footnote{More precisely, the so-called ``lax'' version of Team Semantics -- which is the most common form of Team Semantics, and the only one that we will consider in this work -- is what arises if the existential player is allowed to also play non-deterministic strategies.} This approach works equally well for extensions of First Order Logic whose game-theoretic semantics yield \emph{imperfect information} games, as is the case for \emph{Independence-Friendly Logic} \cite{hintikkasandu89,hintikka96,mann11}, the original motivation for the development of Team Semantics as an equivalent compositional semantics \cite{hodges97}.

Jouko V\"a\"an\"anen \cite{vaananen07} observed that a logic roughly equivalent to Independence-Friendly Logic, but with arguably more convenient formal properties (most importantly \emph{locality}, in the sense that the interpretation of a formula in a team depends only on the restriction of the team to the free variables of the formula), can be obtained by adding to the Team Semantics for First Order Logic, in place of the so-called \emph{slashed quantifiers} (``there exists a $y$, chosen independently from $x$, such that \ldots'') of Independence-Friendly Logic, \emph{functional dependence atoms} $=\!\!(x; y)$ that state that the values of $y$ are \emph{determined} by those of $x$. The resulting logic, called \emph{Dependence Logic}, has been the subject of a considerable amount of research that cannot be summarized here (for an up-to-date introduction, we refer the reader to \cite{sep-logic-dependence}); but, more importantly for the purposes of this work, it was soon noticed that Team Semantics could be used to extend First Order Logic via \emph{other} types of atoms, for example from database theory (see e.g. \cite{gradel13,galliani12}), or via new types of connective (like the ``contradictory negation'' of \cite{vaananen07b}, the ``intuitionistic implication'' of \cite{abramsky09}, or the generalized quantifiers of \cite{engstrom12}) that can have no analogue in Tarskian Semantics as their definition involves possible interactions between different assignments. Other logics based on Team Semantics have disposed with the connectives arising directly from the Game-Theoretic Semantics of First Order Logic, selecting instead different families of connectives: of particular interest in this context is the \textbf{FOT} logic of \cite{kontinen2019logics}, that relates to ordinary First Order Logic not through Game Theoretic Semantics but on the level of \emph{team definability}, in the sense that a family of teams is defined by a \textbf{FOT} formula if and only if the corresponding family of relations is first order definable. 

Team Semantics -- aside from its applications and connections with other areas, which we will not discuss here -- can thus be seen as a generalization of Tarskian Semantics that allows for the construction of new kinds of extensions and fragments of First Order Logic; and while some of these extensions have been studied in some depth by now (see e.g. \cite{kontinennu11,luck2018complexity} for the contradictory negation, \cite{yang10} for the intuitionistic implications, or \cite{galliani12,galliani13b,gallhella13,hannula2015hierarchies,ronnholm2018capturing} for database-theoretic atoms), not much is yet known regarding the \emph{general properties} of the extensions of First Order Logic obtainable through Team Semantics. 

This work is a partial answer to the following question: which extensions of First Order Logic based on Team Semantics are genuinely more expressive than First Order Logic itself, and which ones instead can only specify properties that were already first order definable? This question is the obvious starting point for a classification of Team Semantics-based extensions of First Order Logic; and yet, at the moment only some very partial answers (such as the ones of \cite{galliani2019characterizing,galliani2019nonjumping}) are known. 

The main result of this work is a full characterization -- aside from the technical condition of \emph{relativizability}, that most dependencies of interest satisfy -- of the dependencies that are \emph{doubly strongly first order} in the sense that both they and their negations can be added (jointly or separately) to First Order Logic with Team Semantics without increasing their expressive power; and as we will see, these dependencies are the same ones that are ``safe'' for First Order Logic with Team Semantics augmented with the contradictory negation connective of \cite{vaananen07b} \emph{restricted to literals and dependency atoms}. 
\section{Preliminaries}
\subsection{Notation}
Tuples of variables or constants will be written as $\tuple x = x_1 x_2 \ldots x_n$  and as $\tuple c = c_1 c_2 \ldots c_k$ respectively, and their lengths will be written as $|\tuple x| = n$ and $|\tuple c| = k$. When the context is clear we will freely treat tuples as if they were sets, writing e.g. $c_i \in \tuple c$ for ``$c_i$ occurs in $\tuple c$'', $\tuple c \subseteq \tuple d$ for ``every $c_i$ occurring in $\tuple c$ also occurs somewhere in $\tuple d$'' and $M \backslash \tuple m$ for the set $\{m \in M : m \not \in \tuple m\}$. Given any set $A$, we will write $\mathcal P(A)$ for the powerset of $A$, i.e. $\mathcal P(A) = \{B : B \subseteq A\}$.

First-order models $\mathfrak M$, $\mathfrak A$, $\mathfrak B$, \ldots will have domains of discourse $\dom(\mathfrak M) = M$, $\dom(\mathfrak A) = A$, $\dom(\mathfrak B) = B$ and so on, and we will always assume that such domains have at least two elements. When no ambiguity is possible, we will freely write the relation symbols $R, S, \ldots$ and the constant symbols $a, b, c, \ldots$ for their corresponding interpretations, e.g. we will write $\M = (M, R, a, b)$ instead of $\M = (M, R^\M, a^\M, b^\M)$ for the model $\M$ with domain $M$ whose signature contains a relation symbol $R$ (whose interpretation is the relation also written as $R$) and two constant symbols $a$ and $b$ (whose interpretations in $\M$ we will also write as $a$ and $b$). We will use the $:=$ symbol to represent assignments of relations and elements to relation and constant symbols, writing for example $(M, R:=S, a:=m)$ for the first order model with domain $M$, with the $k$-ary relation symbol $R$ interpreted as the relation $S \subseteq M^k$, and with the constant symbol $a$ interpreted as the element $m \in M$. In this work we will not need to consider first order models with function symbols in their signature, but the results of this work do not depend on their absence. If $\M$ is a model with domain $M$ over the empty signature, we will sometimes write $M \models \phi$ instead of $\M \models \phi$.

If $s: V \rightarrow M$ is a variable assignment from some set $V$ of variables to the domain $M$ of some model $\M$, $v$ is some variable symbol (not necessarily in $V$) and $m \in M$ is an element, we will write $s[m/v]: V \cup \{v\} \rightarrow M$ for the variable assignment obtained from $s$ by setting $s[m/v](v) = m$. Likewise, if $\tuple m = m_1 \ldots m_k$ and $\tuple v = v_1 \ldots v_k$ are tuples of elements of $M$ and of pairwise distinct variables, we write $s[\tuple m/\tuple v]$ for the assignment obtained from $s$ by assigning each variable $v_i \in \tuple v$ to the corresponding element $m_i \in \tuple m$. Given a tuple $\tuple v = v_1 \ldots v_k$ of variables and an assignment $s$, we will write $s(\tuple v)$ for the tuple of elements $s(v_1) \ldots s(v_k)$.

We will write $\FO$ for First Order Logic, and we will assume that all expressions of First Order Logic are in Negation Normal Form (i.e. negation appears only in front of atomic subformulas). For any first order formula $\phi \in \FO$, we will write $\lnot \phi$ for the Negation Normal Form expression logically equivalent to the negation of $\phi$. Repeated existential or universal quantifiers will be often omitted: for example, if $\tuple x = x_1 x_2 x_3$, we will write $\exists \tuple x P \tuple x$ or $\exists x_1 x_2 x_3 P x_1 x_2 x_3$ in place of $\exists x_1 \exists x_2 \exists x_3 P x_1 x_2 x_3$, and similarly for $\forall$. As is usual, $\top$ will represent the always-true literal and $\bot$ will represent the always-false one.

Given two logics $\textbf{L}_1$ and $\textbf{L}_2$ (generally extensions $\FO(\mathcal C, \DD)$ of First Order Logic via some connectives $\mathcal C$ and some atoms $\DD$), we will write $\textbf{L}_1 \equiv \textbf{L}_2$ if every sentence (\emph{not} formula) of $\textbf{L}_1$ is equivalent to some sentence of $\textbf{L}_2$, and vice versa; and we will write $\textbf{L}_1 \succ \textbf{L}_2$ if every sentence of $\textbf{L}_2$ is equivalent to some sentence of $\textbf{L}_1$, but there are sentences of $\textbf{L}_1$ that are not equivalent to any sentence of $\textbf{L}_2$.
\subsection{Team Semantics}
In this section, we will briefly recall the main definitions and some useful properties of Team Semantics.
\begin{definition}[Team]
	Let $\mathfrak M$ be a first order model with domain of discourse $\dom(\mathfrak M) = M$ and let $V$ be a finite set of variables. Then a team $X$ over $\M$ with domain $\dom(X) = V$ is a set of variable assignments $s: V \rightarrow M$. Given such a team and some tuple of variables $\tuple v \subseteq \dom(X)$, we will write $X(\tuple v)$ for the $|\tuple v|$-ary relation $\{s(\tuple v) : s \in X\} \subseteq M^{|v|}$. 
\end{definition}
\begin{definition}[Team Semantics for First Order Logic]
	Let $\M$ be a first order model, let $\phi$ be a first order formula in Negation Normal Form\footnote{It is possible to formulate Team Semantics for expressions not in Negation Normal Form, but that leads to more elaborate definitions -- one has to take care of truth and falsity separately, since in general the Principle of the Excluded Middle fails to hold in Team Semantics-based extensions of $\FO$ for the dual negation operator -- and not much is gained by doing that. In this work, we will only apply Team Semantics to logical expressions in Negation Normal Form.} over the signature of $\M$, and let $X$ be a team over $\M$ whose domain contains the free variables of $\phi$. Then we say that $X$ satisfies $\phi$ in $\M$, and we write $\M \models_X \phi$, if this follows from the following rules:
\begin{description}
    \item[TS-lit:] If $\phi$ is a first order literal, $\M \models_X \phi$ if and only if, for all assignments $s \in X$, $\M \models_s \phi$ in the usual sense of Tarski's Semantics; 
    \item[TS-$\vee$:] $\M \models_X \phi_1 \vee \phi_2$ if and only if $X = Y \cup Z$ for two $Y, Z \subseteq X$ such that $\M \models_Y \phi_1$ and $\M \models_Z \phi_2$;
    \item[TS-$\wedge$:] $\M \models_X \phi_1 \wedge \phi_2$ if and only if $\M \models_X \phi_1$ and $\M \models_X \phi_2$; 
    \item[TS-$\exists$:] $\M \models_X \exists v \psi$ if and only if there exists some function $H: X \rightarrow \mathcal P(M) \backslash \{\emptyset\}$ such that $\M \models_{X[H/v]} \psi$, where $X[H/v] = \{s[m/v] : s \in X, m \in H(s)\}$;
    \item[TS-$\forall$:] $\M \models_X \forall v \psi$ if and only if $\M \models_{X[M/v]} \psi$, where $X[M/v] = \{s[m/v] : s \in X, m \in M\}$.
\end{description}
    A sentence $\phi$ is said to be true in a model $\M$ if and only if $\M \models_{\{\emptyset\}} \phi$, where $\{\emptyset\}$ is the team containing only the empty assignment.
\end{definition}
A couple of commonly known and often useful derived rules for Team Semantics are the following:
\begin{proposition}
	If $\tuple v$ is a tuple of pairwise distinct variables,
	\begin{description}
		\item[TS-$\exists$-vec:] $\M \models_X \exists \tuple v \psi$ if and only if there exists some $H: X \rightarrow \mathcal P(M^{|\tuple v|}) \backslash \{\emptyset\}$ such that $\M \models_{X[H/\tuple v]} \psi$, where $X[H/\tuple v] = \{s[\tuple m/\tuple v] : \tuple m \in H(\tuple s)\}$; 
		\item[TS-$\forall$-vec:] $\M \models_X \forall \tuple v \psi$ if and only if $\M \models_{X[M/\tuple v]} \psi$, where $X[M/\tuple v] = \{s[\tuple m/\tuple v] : \tuple m \in M^{|\tuple v|}\}$. 
	\end{description}
	\label{propo:quant-vec}
\end{proposition}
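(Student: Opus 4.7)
The plan is to prove both items by induction on $k = |\tuple v|$, using the single-variable rules \textbf{TS-$\exists$} and \textbf{TS-$\forall$} as the base case ($k = 1$), where the vector rules coincide with the given semantic clauses. For the inductive step I will write $\tuple v = \tuple v' w$ with $|\tuple v'| = k$ and use that $\exists \tuple v\, \psi$ parses as $\exists \tuple v'\, \exists w\, \psi$ (and similarly for $\forall$).

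For \textbf{TS-$\forall$-vec}, the induction step is essentially bookkeeping: by the induction hypothesis $\M \models_X \forall \tuple v'\, (\forall w\, \psi)$ iff $\M \models_{X[M/\tuple v']} \forall w\, \psi$, and by \textbf{TS-$\forall$} this holds iff $\M \models_{X[M/\tuple v'][M/w]} \psi$. I will then observe that $X[M/\tuple v'][M/w] = X[M/\tuple v]$ directly from the definition, completing the step.

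For \textbf{TS-$\exists$-vec}, the interesting direction is converting between a single ``vector choice function'' $H : X \to \mathcal P(M^{k+1}) \setminus \{\emptyset\}$ and a pair $(H', G)$ where $H' : X \to \mathcal P(M^k) \setminus \{\emptyset\}$ and $G : X[H'/\tuple v'] \to \mathcal P(M) \setminus \{\emptyset\}$. In one direction, from $(H', G)$ I build $H(s) = \{(\tuple m, n) : \tuple m \in H'(s),\ n \in G(s[\tuple m/\tuple v'])\}$, which is nonempty because both $H'(s)$ and each $G(s[\tuple m/\tuple v'])$ are, and verify $X[H/\tuple v] = X[H'/\tuple v'][G/w]$. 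In the other direction, given $H$, I set $H'(s) = \{\tuple m : \exists n\, (\tuple m, n) \in H(s)\}$ and $G(s[\tuple m/\tuple v']) = \{n : (\tuple m, n) \in H(s)\}$, noting that nonemptiness of $H(s)$ yields nonemptiness of $H'(s)$, and nonemptiness of each $G$-value follows from the definition of $H'$. Again the two extended teams coincide, and combining with the induction hypothesis and \textbf{TS-$\exists$} closes the equivalence.

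The only mildly delicate point is in the existential case: one must check that the constructed $G$ is well-defined on $X[H'/\tuple v']$ rather than merely on some smaller or larger set, and that the ``nonempty'' side condition on choice functions is preserved in both directions of the translation. Everything else reduces to unfolding the definitions of $X[\cdot/\cdot]$ and composing extensions of assignments, so I do not anticipate any substantive obstacle beyond careful notation.
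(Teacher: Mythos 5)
Your proposal is correct and is precisely the ``straightforward induction over the length of $\tuple v$'' that the paper itself invokes for this proposition without supplying further detail, including the key conversion between a vector choice function $H$ and a pair $(H',G)$. The one delicate point you flag --- well-definedness of $G$ when distinct pairs $(s,\tuple m)$ yield the same extended assignment $s[\tuple m/\tuple v']$ --- is discharged by taking $G(t)$ to be the union of $\{n : (\tuple m,n)\in H(s)\}$ over all such representations of $t$, which still gives $X[H'/\tuple v'][G/w]=X[H/\tuple v]$ and preserves nonemptiness.
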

These two rules can be proved from TS-$\exists$ and TS-$\forall$ respectively via straightforward induction over the length of $\tuple v$, and are also true for all Team Semantics-based extensions of First Order Logic discussed in this work. 

For first order logic proper, Team Semantics reduces to ordinary Tarskian Semantics:
\begin{proposition}(\cite{vaananen07}, Corollary 3.32)
Let $\M$ be a first order model, let $\phi$ be a first order formula in Negation Normal Form over the signature of $\M$, and let $X$ be a team over $\M$ whose domain contains the free variables of $\phi$. Then $\M \models_X \phi$ if and only if, for all $s \in X$, $\M \models_s \phi$ in the sense of Tarskian Semantics. 

In particular, if $\phi$ is a sentence, $\M \models \phi$ in the sense of Team Semantics if and only if $\M \models \phi$ in the sense of Tarskian Semantics. 
	\label{propo:flat}
\end{proposition}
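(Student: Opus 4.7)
The plan is a straightforward structural induction on $\phi$, proving both directions of the equivalence simultaneously. The base case (first order literals) is immediate from \textbf{TS-lit}, which by definition stipulates $\M \models_X \phi$ iff $\M \models_s \phi$ in the Tarskian sense for every $s \in X$. The conjunction and universal quantifier cases are routine: for conjunction this is the direct composition of \textbf{TS-$\wedge$} with the induction hypothesis; for the universal quantifier, combining \textbf{TS-$\forall$} with the induction hypothesis gives that $\M \models_X \forall v \psi$ iff every element of $X[M/v]$ Tarski-satisfies $\psi$, which unpacks precisely to ``every $s \in X$ Tarski-satisfies $\forall v \psi$''.

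The two substantive cases are disjunction and existential quantification. In each, the forward direction (team to pointwise) is routine from the induction hypothesis applied to the given witnessing split or witnessing function. For the backward direction, the key move is to \emph{construct} the required witness from the pointwise Tarskian data. For \textbf{TS-$\vee$}, assuming every $s \in X$ Tarski-satisfies $\phi_1 \vee \phi_2$, I would take $Y = \{s \in X : \M \models_s \phi_1\}$ and $Z = X \setminus Y$: by construction every element of $Z$ Tarski-satisfies $\phi_2$, so the induction hypothesis yields $\M \models_Y \phi_1$ and $\M \models_Z \phi_2$, as required. For \textbf{TS-$\exists$}, choosing for each $s \in X$ a Tarskian witness $m_s \in M$ with $\M \models_{s[m_s/v]} \psi$ and setting $H(s) = \{m_s\}$ (which is nonempty, as required) gives $X[H/v] = \{s[m_s/v] : s \in X\}$, whose every element Tarski-satisfies $\psi$; the induction hypothesis then delivers $\M \models_{X[H/v]} \psi$.

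The ``in particular'' clause is a direct specialization: a sentence has no free variables, so the team $\{\emptyset\}$ contains precisely the empty assignment under which Tarskian satisfaction is defined, and the main equivalence reduces to the stated biconditional. I do not anticipate a genuine obstacle, as this is a standard warm-up result for Team Semantics; the only minor point worth flagging is the edge case $X = \emptyset$, where the team-semantic side is vacuously true because the empty function $H \colon \emptyset \to \mathcal P(M) \setminus \{\emptyset\}$ trivially witnesses \textbf{TS-$\exists$}, matching the vacuous truth of the Tarskian side.
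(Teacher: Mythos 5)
Your structural induction is correct and is exactly the standard argument for this ``flatness'' property; the paper itself gives no proof, merely citing V\"a\"an\"anen's Corollary 3.32, whose proof proceeds by the same induction with the same witness constructions (splitting $X$ by which disjunct each assignment satisfies, and taking singleton-valued $H$ from pointwise Tarskian witnesses). Your handling of the empty-team edge case is a welcome extra precaution, though nothing in the induction actually breaks there.
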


What is then the purpose of Team Semantics? Historically, Team Semantics was developed in \cite{hodges97} as a means to provide a compositional semantics to Independence-Friendly Logic, which generalizes First Order Logic $\FO$ by turning its Game-Theoretic Semantics into a game of imperfect information \cite{hintikkasandu89,hintikka96,mann11}. The same logic can be defined by Team Semantics by introducing rules for ``slashed'' quantifiers $(\exists v / \tuple w) \psi$ requiring that the choice of $v$ for any assignment $s$ is \emph{independent} from the values of the variables in $\tuple w$, in the sense that for any two assignments that only disagree with respect to $\tuple w$ the same value for $v$ is selected; but, as observed by V\"a\"an\"anen in \cite{vaananen07}, an essentially expressively equivalent logic may be obtained by introducing instead a (functional) \emph{dependence atom} $=\!\!(\tuple x; \tuple y)$, where $\tuple x$ and $\tuple y$ are tuples of variables, whose intended meaning is ``The value of $\tuple y$ is determined by the value of $\tuple x$'' and whose corresponding rule in Team Semantics is 
\begin{description}
    \item[TS-dep:] $\M \models_X =\!\!(\tuple x; \tuple y)$ if and only, if for any two $s, s' \in X$ 
    \[
        s(\tuple x) = s'(\tuple x) \Rightarrow s(\tuple y) = s'(\tuple y)
    \]
\end{description}
This rule corresponds precisely to the database-theoretic notion of \emph{functional dependence} \cite{armstrong74}; and it was soon recognized that other dependency notions may also be studied in the same context, such as \emph{independence statements} \cite{gradel13}
\begin{description}
\item[TS-ind:] $\M \models_X \tuple x \bot_{\tuple y} \tuple z$ if and only if for any two $s, s' \in X$ with $s(\tuple y) = s'(\tuple y)$ there exists some $s'' \in X$ with $s''(\tuple x \tuple y) = s(\tuple x \tuple y)$ and $s''(\tuple y \tuple z) = s'(\tuple y \tuple z)$
\end{description}
which as per \cite{engstrom12} have a close connection with database-theoretic \emph{embedded multivalued dependencies}, \emph{inclusion dependencies} \cite{galliani12,gallhella13,hannula2015hierarchies}
\begin{description}
\item[TS-inc:] $\M \models_X \tuple x \subseteq \tuple y$ if and only if, for all $s \in X$, there exists some $s' \in X$ with $s(\tuple x) = s'(\tuple y)$.
\end{description}
and \emph{exclusion dependencies} \cite{galliani12,ronnholm2019expressive}
\begin{description}
\item[TS-exc:] $\M \models_X \tuple x | \tuple y$ if and only if, for all $s, s' \in X$, $s(\tuple x) \not = s'(\tuple y)$.
\end{description}

But what is, in general, an atom in Team Semantics? As observed in \cite{kuusisto2015double}, such an atom can be defined in the same manner in which \emph{generalized quantifiers} are defined in First Order Logic: 
\begin{definition}[Generalized Dependency]
	Let $k \in \mathbb N$. Then a $k$-ary \emph{generalized dependency atom} is a class $\D$, closed under isomorphisms, of models $\M = (M, R)$ over the signature $\{R\}$, where $R$ is a $k$-ary relation. Such an atom gives rise to the rule
\begin{description}
\item[TS-$\D$:] $\M \models_X \D \tuple x$ if and only if $(\dom(\M), R:=X(\tuple x)) \in \D$
\end{description}
where $(\dom(\M), R:=X(\tuple x))$ is the first order model with domain $\dom(\M)$ and signature $\{R\}$, where the relation symbol $R$ is interpreted as $X(\tuple x) = \{s(\tuple x) : s \in X\}$.
\end{definition}
\begin{remark}
	\label{rem:noterms}
	Sometimes dependency atoms are permitted to apply not only to tuples of variables but more in general to tuples \emph{terms}, thus allowing expressions like $=\!\!(f(x); f(f(x)))$ where $f$ is a function symbol. For simplicity, in this work we ignore this possibility and assume that dependencies are always applied to variables. All the results in this work, nonetheless, transfer seamlessly to the case in which dependencies involving terms are allowed: indeed, one can write e.g. $\exists z w (z = f(x) \wedge w = f(f(x)) \wedge =\!\!(z; w))$ in place of $=\!\!(f(x); f(f(x)))$ and preserve the same meaning.
\end{remark}
\begin{definition}[$\FO(\mathcal D)$]
Let $\mathcal D = \{\D_1, \D_2, \ldots\}$ be a set of generalized dependencies. Then we write $\FO(\mathcal D)$ to the logic obtained by taking First Order Logic (with Team Semantics) and adding to it all the generalized dependency atoms $\D \in \mathcal D$. 
\end{definition}
\begin{remark}
We will write $\FO(\D)$ rather than $\FO(\{\D\})$, $\FO(\D_1, \D_2, \ldots)$ rather than $\FO(\{\D_1, \D_2, \ldots\})$, and $\FO(\mathcal D, \mathcal E)$ rather than $\FO(\mathcal D \cup \mathcal E)$.
\end{remark}

An important class of generalized dependencies is that of \emph{first order} generalized dependencies: 
\begin{definition}[First Order Generalized Dependencies]
A $k$-ary generalized dependency $\D$ is \emph{first order} if there exists some first order sentence $\D(R)$, over the signature $\{R\}$ where $R$ is a $k$-ary relation symbol, such that 
\[
    \D = \{(M, R): (M, R) \models \D(R)\}.
\]
\end{definition}
\begin{remark}
As the above definition shows, we will use the same symbol for a first order dependency and for the first order sentence that defines it: for $\D$ first order, $(M, R) \in \D$ if and only if $(M, R) \models \D(R)$.
\end{remark}

If $\D$ is a first order generalized dependency, it is immediate to see that rule TS-$\D$ is equivalent to 
\begin{description}
\item[TS-$\D$-FO:] $\M \models_X \D \tuple x$ if and only if $(\dom(\M), R:=X(\tuple x)) \models \D(R)$. 
\end{description}
Functional dependence atoms, independence atoms, inclusion atoms, and exclusion atoms (like most other dependency atoms studied in the context of Team Semantics so far) are first order dependencies, generated respectively by the first order sentences 
\begin{align}
    & \forall \tuple x \tuple y \tuple y' ((R \tuple x \tuple y \wedge R \tuple x \tuple y') \rightarrow \tuple y = \tuple y');\\
    & \forall \tuple x \tuple y \tuple z \tuple x' \tuple z' ((R \tuple x \tuple y \tuple z \wedge R \tuple x' \tuple y \tuple z') \rightarrow R \tuple x \tuple y \tuple z');\\
    & \forall \tuple x \tuple y (R \tuple x \tuple y \rightarrow \exists \tuple z (R \tuple z \tuple x));\\
    & \forall \tuple x \tuple y \tuple x' \tuple y' ((R \tuple x \tuple y \wedge R \tuple x' \tuple y') \rightarrow \tuple x \not = \tuple y').
\end{align}
Nonetheless, the logics obtained by adding these dependencies to First Order Logic with Team Semantics (called respectively ``Dependence Logic'', ``Independence Logic'' ``Inclusion Logic'' and ´´Exclusion Logic'') are more expressive than first order logic: more precisely, Dependence Logic, Independence Logic and Exclusion Logic are equiexpressive with Existential Second Order Logic $\Sigma_1^1$ (\cite{vaananen07}, Corollary 6.3; \cite{gradel13}, Corollary 15; \cite{galliani12}, Corollary 4.18 respectively), in the sense that every sentence of any of these logics is equivalent to some sentence of $\Sigma_1^1$ and vice versa\footnote{On the other hand, it is not true that any formula with free variables in any of these three logics is equivalent to some formula in either of the other: in this stronger sense, Dependence Logic and Exclusion Logic are equivalent to each other and properly contained in Independence Logic.} while Inclusion Logic is equivalent to the positive fragment of Greatest Fixed Point Logic.

The fact that adding first order dependencies to First Order Logic with Team Semantics may lead to logics stronger than First Order Logic is a consequence of the higher order character of Team Semantics, and in particular to the second-order existential quantifications implicit in the rules TS-$\vee$ and TS-$\exists$ for disjunction and existential quantification.\footnote{Ultimately, the existential second order character of Team Semantics derives from the existential second order character of Game-Theoretic Semantics, that defines truth in terms of the existence of winning strategies -- that is to say, functions from positions to moves or sets of moves -- in certain semantic games.} This raises immediately a natural (and, for now, unsolved in the general case) problem: can we characterize the generalized dependencies (or the families of dependencies, or even more in general the families of connectives and dependencies) that do \emph{not} increase the expressive power of First Order Logic if added to it?
\subsection{Strongly First Order Dependencies}
\label{subsec:sfo}
In this section, we will recall some partial results related to the problem of characterizing the dependencies that are \emph{Strongly First Order} in the sense of the following definition:
\begin{definition}
A generalized dependency $\D$ is \emph{strongly first order} if and only if $\FO(\D) \equiv \FO$, i.e., if and only if every sentence of $\FO(\D)$ is equivalent to some first order sentence.
Likewise, a family of generalized dependencies $\DD$ is strongly first order if and only if $\FO(\mathcal D) \equiv \FO$. 
\end{definition}

\begin{remark}
Clearly, if $\DD$ is a strongly first order family of dependencies then every $\D \in \DD$ is strongly first order. The converse however is not self-evident: in principle, there could exist two dependencies $\D_1$ and $\D_2$ such that $\FO(\D_1) \equiv \FO$, $\FO(\D_2) \equiv \FO$, but $\FO(\D_1, \D_2) \succ \FO$. In general, one must thus be mindful of the distinction between ``a strongly first order family of dependencies'' and ``a family of strongly first order dependencies''. If the conjecture about the characterization of strongly first order dependencies of \cite{galliani2019nonjumping} held, then indeed $\DD$ would be strongly first order if and only if every $\D \in \DD$ is strongly first order; but at the time of writing that conjecture is unproven. In this work, we will show that a similar result holds for a certain strengthening of the notion of strong first orderness.
\end{remark}

A strongly first order generalized dependency must necessarily be first order:  
\begin{proposition}
Let $\D$ be a strongly first order generalized dependency. Then $\D$ is first order. 
\end{proposition}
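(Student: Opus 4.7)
The plan is to construct, for an arbitrary $k$-ary strongly first order dependency $\D$, a sentence of $\FO(\D)$ over the signature $\{R\}$ whose truth on a model $(M,R)$ captures exactly membership in $\D$, and then use the hypothesis $\FO(\D) \equiv \FO$ together with flatness (Proposition \ref{propo:flat}) to extract a first order defining sentence for $\D$.

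Concretely, I would consider the sentence
\[
\phi \;:=\; \forall x_1 \ldots x_k \bigl(\lnot R x_1 \ldots x_k \,\vee\, (R x_1 \ldots x_k \wedge \D x_1 \ldots x_k)\bigr).
\]
Starting from the team $\{\emptyset\}$ on the model $\M = (M,R)$, rule TS-$\forall$-vec produces the team $X$ with $X(\tuple x) = M^k$. Splitting $X = Y \cup Z$ according to TS-$\vee$ then forces $Y(\tuple x) \subseteq M^k \setminus R$ (by TS-lit applied to $\lnot R\tuple x$) and $Z(\tuple x) \subseteq R$ (by TS-lit applied to $R\tuple x$), while $X(\tuple x) = M^k$ forces $Y(\tuple x) \cup Z(\tuple x) = M^k$ and hence $Z(\tuple x) = R$ exactly. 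The remaining conjunct $\D \tuple x$ on $Z$ then amounts, by rule TS-$\D$, to $(M, R) \in \D$. So $\M \models_{\{\emptyset\}} \phi$ if and only if $(M, R) \in \D$.

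Because $\D$ is strongly first order, $\phi$ is equivalent, as a sentence, to some first order sentence $\psi$ over the signature $\{R\}$. By Proposition \ref{propo:flat}, Team Semantics and Tarskian Semantics agree on first order sentences, so for every $\{R\}$-structure $(M,R)$ we have $(M,R) \models \psi$ in the Tarskian sense if and only if $(M,R) \in \D$. Taking $\D(R) := \psi$ witnesses that $\D$ is first order.

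I do not expect any serious obstacle: the only delicate point is checking that the team semantics of $\phi$ really pins down $Z(\tuple x) = R$ rather than some larger or smaller relation, which is handled by the combination of the disjunction with the conjunct $R\tuple x$ in the right disjunct (without that conjunct, one would only obtain $Z(\tuple x) \supseteq R$, which is insufficient because $\D$ need not be upward closed in $R$). Everything else is a direct unfolding of the Team Semantics rules together with the flatness of first order logic.
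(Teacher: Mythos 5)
Your proof is correct and follows exactly the paper's approach: the paper uses the very same sentence $\forall \tuple z (\lnot R \tuple z \vee (R \tuple z \wedge \D \tuple z))$ and appeals to strong first orderness to obtain the defining first order sentence, leaving the team-semantic verification implicit. You have simply spelled out the details that the paper leaves to the reader, and your handling of the split (pinning down $Z(\tuple x) = R$ exactly via the conjunct $R\tuple x$) is accurate.
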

\begin{proof}
	Consider the $\FO(\D)$ sentence $\forall \tuple z (\lnot R \tuple z \vee (R \tuple z \wedge \D \tuple z))$. Since $\D$ is strongly first order, this sentence corresponds to some first order sentence $\phi(R)$; and this sentence defines the generalized dependency $\D$, in the sense that $(M, R) \in \D$ if and only if $(M, R) \models \phi(R)$ (this can be verified by applying the rules of Team Semantics). Therefore, $\D$ is defined by a first order sentence.
\end{proof}

A simple and important example of strongly first order dependencies is given by the \emph{constancy atoms}\footnote{The notation $\const[\tuple x]$ for ``$\tuple x$ is constant'' derives from Dependence Logic: indeed, $\const[\tuple x]$ is equivalent to $=\!\!(\emptyset; \tuple x)$, where $\emptyset$ is the empty tuple of variables. Thus, the constancy atom $=\!\!(\tuple x)$ can be seen as the degenerate  functional dependency atom that states that the value of $\tuple x$ is determined by the value of the empty tuple of variables.}
\begin{description}
	\item[TS-Const:] $\M \models_X \const[\tuple x]$ if and only if for all $s, s' \in X$ it holds that $s(\tuple x) = s'(\tuple x)$. 
\end{description}

A common use of constancy atoms is to force an existentially quantified variable (or a tuple of existentially quantified variables) to take only one value in a team, as the following commonly known result (that follows straightforwardly from Proposition \ref{propo:quant-vec} and from the rules of Team Semantics) shows: 
\begin{proposition}
	For all families of dependencies $\DD$, formulas $\phi$ in $\FO(\const, \DD)$, tuples of pairwise distinct variables $\tuple v$, models $\M$ with domain $M$ whose signature contains that of $\phi$, and teams $X$ over the free variables of $\exists \tuple v \phi$, 
	\[
		\M \models_X \exists \tuple v (\const[\tuple v] \wedge \phi) \text{ if and only if } \exists \tuple m \in M^{|\tuple v|} \text{ s.t. } \M \models_{X[\tuple m/\tuple v]} \phi
	\]
	where $X[\tuple m/\tuple v] = \{s[\tuple m/\tuple v] : s \in X\}$. 
	\label{propo:exists1}
\end{proposition}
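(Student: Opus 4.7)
The plan is to unfold the Team Semantics rules and observe that the combination of an existential quantifier with a constancy atom on the same tuple forces the witness to be a single value across the entire team. By Proposition \ref{propo:quant-vec} together with rule TS-$\wedge$, the statement $\M \models_X \exists \tuple v (\const[\tuple v] \wedge \phi)$ holds if and only if there is some $H : X \to \mathcal P(M^{|\tuple v|}) \setminus \{\emptyset\}$ such that $X[H/\tuple v]$ satisfies both $\const[\tuple v]$ and $\phi$. So the task reduces to showing that such an $H$ is equivalent to a choice of a single tuple $\tuple m \in M^{|\tuple v|}$.

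For the forward direction, suppose such an $H$ exists. If $X = \emptyset$, then $X[H/\tuple v] = \emptyset = X[\tuple m/\tuple v]$ for any $\tuple m$, and since the standing assumption is that $|M| \geq 2$ such a tuple exists; the right-hand side follows trivially. If $X \neq \emptyset$, pick any $s_0 \in X$ and any $\tuple m \in H(s_0)$. By rule TS-Const, all assignments in $X[H/\tuple v]$ must agree on $\tuple v$, so for every $s \in X$ we have $H(s) \subseteq \{\tuple m\}$; combined with $H(s) \neq \emptyset$ this forces $H(s) = \{\tuple m\}$. Hence $X[H/\tuple v] = X[\tuple m/\tuple v]$, and $\M \models_{X[\tuple m/\tuple v]} \phi$ as required.

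For the backward direction, given $\tuple m$ with $\M \models_{X[\tuple m/\tuple v]} \phi$, define $H(s) = \{\tuple m\}$ for every $s \in X$. Then by definition $X[H/\tuple v] = X[\tuple m/\tuple v]$, so $\phi$ is satisfied on $X[H/\tuple v]$, and constancy holds because every assignment in the expanded team has $\tuple v = \tuple m$. Applying TS-$\exists$-vec and TS-$\wedge$ in the other direction closes the equivalence.

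The only real subtlety is the uniqueness step in the forward direction: one must use the fact that the witnessing function $H$ is required to be strictly nonempty-valued to promote the inclusion $H(s) \subseteq \{\tuple m\}$ (given by TS-Const) to an equality. Apart from this and the empty-team edge case (handled by $|M| \geq 2$), the argument is just a direct unfolding of the semantic rules and makes no use of the internal structure of $\phi$ or of the family $\DD$, which is why the statement holds uniformly across all extensions $\FO(\const, \DD)$.
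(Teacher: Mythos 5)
Your proof is correct and is precisely the argument the paper has in mind: the paper states this result without a detailed proof, noting only that it "follows straightforwardly from Proposition \ref{propo:quant-vec} and from the rules of Team Semantics," and your unfolding via TS-$\exists$-vec, TS-$\wedge$, and TS-Const (including the nonemptiness of $H(s)$ to force $H(s)=\{\tuple m\}$, and the empty-team edge case) is exactly that straightforward argument, carried out carefully.
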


A more general strongly first order family of generalized dependencies is given by first order \emph{upwards closed dependencies}, that are strongly first order even taken together with each other and with the constancy atom: 
\begin{definition}[Upwards Closed Dependencies]
A k-ary dependency $\D$ is \emph{upwards closed} if and only if 
\[
    (M, R) \in \D, R \subseteq R' \subseteq M^k \Rightarrow (M, R') \in \D
\]
	for all first order models $(M, R)$ with signature $\{R\}$, where $R$ is a $k$-ary relation symbol. We write $\mathcal{UC}$ for the set of all first order upwards closed dependencies. 
\end{definition}
\begin{theorem}[\cite{galliani2015upwards}, Theorem 21]
	$\mathcal{UC} \cup \{\const\}$ is strongly first order. 
    \label{thm:upwards_sfo}
\end{theorem}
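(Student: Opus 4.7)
The plan is to construct an effective translation from sentences of $\FO(\mathcal{UC}, \const)$ to equivalent first-order sentences. I would prove the stronger inductive claim that for every formula $\phi(\tuple v) \in \FO(\mathcal{UC}, \const)$ there exists a first-order formula $\phi^*(R)$ over the signature enlarged with a fresh $|\tuple v|$-ary relation symbol $R$ such that for every model $\M$ and nonempty team $X$ with $\tuple v \subseteq \dom(X)$,
\[
    \M \models_X \phi \iff (\M, R := X(\tuple v)) \models \phi^*(R).
\]
For a sentence the team $\{\emptyset\}$ makes $R$ the trivial $0$-ary relation, so $\phi^*$ collapses to a first-order sentence equivalent to $\phi$ and the theorem follows.

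The atomic and conjunctive steps of the induction are routine. A first-order literal $\alpha$ translates via flatness (Proposition~\ref{propo:flat}) as $\alpha^* = \forall \tuple v\,(R \tuple v \to \alpha)$; a UC atom $\D \tuple x$ translates by substituting the projection of $R$ onto the coordinates occupied by $\tuple x$ in $\tuple v$ for the predicate symbol $S$ in the first-order defining sentence $\D(S)$; $\const[\tuple x]$ translates as $\forall \tuple v \forall \tuple v'\,(R \tuple v \wedge R \tuple v' \to \tuple x = \tuple x')$; and conjunction translates componentwise. The non-trivial cases are disjunction and existential quantification, whose naive translations require second-order quantification over the team split (for $\vee$) or the Skolem function (for $\exists$). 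The key insight is that the upward closure of UC atoms lets the ``UC side'' of any such choice be pushed to a canonical first-order-definable extreme: for example, in a disjunction $\alpha \vee \D \tuple x$ with $\alpha$ first-order, flatness of $\alpha$ and upward closure of $\D$ mean the optimal split is $Y = \{s \in X : \M \models_s \alpha\}$ and $Z = X \setminus Y$, both first-order definable from $R$; for the existential, upward closure of the UC part permits $H(s) = M$ (collapsing $\exists$ to $\forall$), while Proposition~\ref{propo:exists1} turns constancy-bound existentials into ordinary first-order ones.

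The main obstacle is that disjunctions, existentials, UC atoms, and constancy atoms can be nested arbitrarily, and the two reduction tricks above may interfere when mixed freely. I would overcome this by first establishing a normal form lemma: every $\FO(\mathcal{UC}, \const)$ formula is equivalent, on nonempty teams, to one of the shape $\exists \tuple c\,(\const[\tuple c] \wedge \forall \tuple u\,(\alpha \vee \theta))$, where $\alpha$ is first-order and $\theta$ is a conjunction of UC atoms over the variables $\tuple v, \tuple c, \tuple u$. The induction proving this normal form pushes disjunctions and existentials outward using quantifier commutations, distributivity of $\wedge$ over $\vee$ and of both over $\exists$, and the introduction of fresh constancy-bound witnesses to absorb nested second-order choices; upward closure justifies collapsing intermediate UC constraints into the single outer $\theta$. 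Once the normal form is available, the translation $\phi \mapsto \phi^*$ is mechanical as described above, and the sentence-level reduction to $\FO$ drops out immediately.
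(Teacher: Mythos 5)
Note first that the paper does not prove this theorem itself: it is imported verbatim from \cite{galliani2015upwards} (Theorem 21), so I am comparing your proposal against the proof given there. Your overall architecture --- a normal form $\exists \tuple c\,(\const[\tuple c] \wedge \forall \tuple u\,(\alpha \vee \theta))$ with $\alpha$ first order and $\theta$ built from upwards closed atoms, followed by a translation of team satisfaction into a first-order property of $(\M, R:=X(\tuple v))$ --- is essentially the strategy of the cited proof, and the endpoint is correct: satisfaction of such a normal form really is a first-order condition on $X(\tuple v)$.

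However, the ``key insight'' is stated backwards, in a way that would make your translation wrong. Under the lax semantics used here (rule \textbf{TS-}$\vee$), the two halves of a split need not be disjoint, and for an upwards closed disjunct the canonical extreme is to give it the \emph{whole} team, not the complement of the flat part: $\alpha \vee \D \tuple x$ is equivalent to $\D \tuple x$ outright (take $Y = \emptyset$, $Z = X$, noting $\M \models_\emptyset \alpha$ vacuously; conversely any witnessing $Z$ has $Z(\tuple x) \subseteq X(\tuple x)$, so upward closure forces $(M, X(\tuple x)) \in \D$). Your split $Y = \{s : \M \models_s \alpha\}$, $Z = X \setminus Y$ fails already when $\D$ is the nonemptiness atom and every member of $X$ satisfies $\alpha$: then your $Z$ is empty and your translation declares the formula false although it is true. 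The analogous correction is needed for existentials: $H(s) = M$ is legitimate only when the body is monotone in the team; in general $H(s)$ must be the set of values compatible with the flat part of the body. Finally, the normal form lemma is where essentially all of the difficulty of the theorem is concentrated, and your sketch of it invokes identities that do not hold ($\forall$ does not distribute over $\vee$) and does not engage with the delicate cases: for instance $\const[x] \vee \const[x]$ expresses $|X(x)| \le 2$ and is brought into normal form by introducing \emph{two} fresh constancy-bound witnesses and the flat disjunction $x = c_1 \vee x = c_2$, not by commuting connectives, and splits nested under universal quantifiers require a genuine inductive argument. So the proposal points at the right theorem-shaped object, but the two reduction steps it rests on are misstated and the lemma meant to reconcile them is asserted rather than proved.
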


An upwards closed, first order dependence atom which will be of some use in this work is the \emph{totality atom} $\All (\tuple x)$, which says that $\tuple x$ takes \emph{all possible values} in the current team: 
\begin{description}
	\item[TS-\textbf{All}:] $\M \models_X \textbf{All}(\tuple x)$ if and only if $X(\tuple x) = \dom(\M)^{|\tuple x|}$.
\end{description}

Also of interest are the families $\DD_0$ and $\DD_1$ of $0$-ary and unary first order dependencies. A $0$-ary first order dependency atom $[\psi] = \{M : M \models \psi\}$ is nothing but a family of models over the empty signature characterized by some first order sentence over the empty signature $\psi$, and $\M \models_X [\psi]$ if and only if $M \models \psi$ (that is to say, $0$-ary dependencies do not ``look'' at the team $X$ but only at the domain $\dom(\M) = M$). A unary dependency atom is instead a family of models $(M, P)$ over the signature $\{P\}$, where $P$ is a unary predicate, characterized by some first order sentence over the signature $\{P\}$. As shown in (\cite{galliani2016strongly}, Proposition 9 and Theorems 9 and 10), both of these families are strongly first order.

A notion closely related to strong first orderness is that of \emph{safety}: 
\begin{definition}[Safe Dependencies]
    Let $\D$ be a generalized dependency and let $\mathcal E$ be a family of dependencies. Then $\D$ is \emph{safe} for $\mathcal E$ if $\FO(\D, \mathcal E) \equiv \FO(\mathcal E)$, that is, if every sentence of $\FO(\D, \mathcal E)$ is equivalent to some sentence of $\FO(\mathcal E)$.
    
    Likewise, if $\mathcal D$ and $\mathcal E$ are two families of dependencies, $\mathcal D$ is safe for $\mathcal E$ if $\FO(\mathcal D, \mathcal E) \equiv \FO(\mathcal E)$. 
    
    A dependency $\D$ or a family of dependencies $\mathcal D$ is said to be \emph{safe} if it is safe for all families of dependencies $\mathcal E$. 
\end{definition}
Clearly, a dependency $\D$ or a family of dependencies $\mathcal D$ is strongly first order if and only if it is safe for $\emptyset$: thus, in this sense, safety is a generalization of strong first orderness. 

However, as shown in (\cite{galliani2020safe}, Theorem 53), strongly first order dependencies are not necessarily safe: in particular, if $\subseteq_1$ represents the ``unary'' inclusion atom $x \subseteq y$, in which $x$ and $y$ are single variables rather than tuples of variables\footnote{Note that, as a generalized dependency, $\subseteq_1$ is \emph{binary}, not a unary: and indeed, it is not strongly first order.}, we have that $\FO(\const, \subseteq_1) \succ \FO(\subseteq_1)$. Nonetheless, as shown in \cite{galliani2019characterizing}, strongly first order dependencies are indeed safe for the class of \emph{downwards closed} dependencies: 
\begin{definition}[Downwards Closed Dependencies]
A k-ary dependency $\D$ is \emph{downwards closed} if and only if 
\[
    (M, R) \in \D, R' \subseteq R \Rightarrow (M, R') \in \D
\]
for all first order models $(M, R)$ with signature $\{R\}$, where $R$ is a $k$-ary relation symbol. We write $\mathcal{DC}$ for the set of all first order downwards closed dependencies. 
\end{definition}
\begin{theorem}[\cite{galliani2019characterizing}, Theorem 3.8]
	Let $\mathcal D$ be a strongly first order family of dependencies and let $\mathcal E \subseteq \mathcal {DC}$ be a family of downwards closed dependencies. Then $\mathcal D$ is safe for $\mathcal E$: $\FO(\mathcal D, \mathcal E) \equiv \FO(\mathcal E)$. 
    \label{thm:sfo_safe_dc}
\end{theorem}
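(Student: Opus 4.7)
The plan is to prove by induction on the structure of $\phi \in \FO(\mathcal D, \mathcal E)$ that every sentence of $\FO(\mathcal D, \mathcal E)$ is equivalent to a sentence of $\FO(\mathcal E)$. The two ingredients are strong first-orderness of $\mathcal D$ (meaning $\FO(\mathcal D) \equiv \FO$) at the sentence level, and downward closure of $\mathcal E$-atoms. A preliminary observation, established by a routine structural induction, is that every $\FO(\mathcal E)$-formula is downward closed: if $\M \models_X \psi$ and $X' \subseteq X$ then $\M \models_{X'} \psi$, because literals and $\mathcal E$-atoms are downward closed and $\wedge, \vee, \exists, \forall$ all preserve downward closure under team semantics. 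This flexibility to pass to subteams inside $\FO(\mathcal E)$-subformulas is the essential technical lever.

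Since strong first-orderness is only a sentence-level equivalence, a naive formula-by-formula substitution of $\mathcal D$-atoms by $\FO(\mathcal E)$-formulas will not succeed; for instance, the totality atom $\textbf{All}$ lies in a strongly first order family but is not itself expressible as a first-order team-semantic formula. I would therefore prove a strengthened inductive claim about formulas with free variables, showing that any $\FO(\mathcal D, \mathcal E)$-formula $\phi(\tuple x)$ can be put into a normal form $\exists \tuple u (\gamma(\tuple x, \tuple u) \wedge \delta(\tuple x, \tuple u))$, with $\gamma \in \FO(\mathcal D)$ capturing all the $\mathcal D$-atoms and $\delta \in \FO(\mathcal E)$ capturing the downward-closed part. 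The existentially quantified witness $\tuple u$ would record enough team information that, once $\phi$ sits in the context of an entire sentence, $\gamma$ becomes an $\FO(\mathcal D)$-sentence to which the SFO property of $\mathcal D$ applies, yielding a first-order equivalent. Downward closure of $\delta$ ensures that the team-semantic conjunction with $\gamma$ does not force the team to grow, and permits team splits and extensions to distribute appropriately between the two components.

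The main obstacle will be the disjunction step of the induction, where a team $X$ is split as $Y \cup Z$ and the normal forms $\exists \tuple u (\gamma_1 \wedge \delta_1)$ and $\exists \tuple u (\gamma_2 \wedge \delta_2)$ obtained for the two sides must be combined into a single normal form for $X$. The $\mathcal D$-atoms inside $\gamma_1$ and $\gamma_2$ are evaluated on the respective subteams extended by the witnesses for $\tuple u$, and rebuilding a joint normal form requires patching them together so that the $\mathcal D$-atoms see suitably disjoint relation parameters. Downward closure of $\delta_1, \delta_2$ is what makes this patching possible: we can restrict each $\delta_i$ to any sub-witness without losing satisfaction, which lets us renormalize the witness variables for the whole of $X$. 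Once the normal form is established at the outermost sentence level, the $\gamma$-part is a closed $\FO(\mathcal D)$-expression which by strong first-orderness is equivalent to a first-order sentence; combining this with $\delta \in \FO(\mathcal E)$ yields the desired $\FO(\mathcal E)$-sentence and closes the induction.
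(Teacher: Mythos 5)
The paper does not reprove this theorem -- it imports it from \cite{galliani2019characterizing} -- so your argument can only be judged on its own terms, and it has a genuine gap at its final and most important step. Suppose you do establish the normal form $\phi \equiv \exists \tuple u\,(\gamma(\tuple u) \wedge \delta(\tuple u))$ at the sentence level. Then $\gamma$ is \emph{not} ``a closed $\FO(\mathcal D)$-expression'': it is an open formula evaluated at the team $\{\emptyset\}[H/\tuple u]$ for an existentially chosen nonempty set $H$ of witness tuples, conjoined with $\delta$ evaluated on the \emph{same} team. Strong first-orderness only guarantees that the sentence $\exists \tuple u\, \gamma$ has a first-order equivalent, i.e.\ it tells you whether \emph{some} $H$ satisfies $\gamma$; it gives no first-order handle on \emph{which} $H$ do, and you need an $H$ that simultaneously satisfies $\delta$. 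Substituting for $\gamma$ inside the context $\exists \tuple u(\,\cdot\, \wedge \delta)$ a formula that is only sentence-equivalent to it is precisely the kind of context-insensitive replacement that fails in Team Semantics -- this is the whole reason safety is a strictly stronger notion than strong first-orderness. The paper's own example $\FO(\const, \subseteq_1) \succ \FO(\subseteq_1)$ (with $\const$ strongly first order and $\subseteq_1$ union-closed but not downwards closed) is a useful sanity check: your argument must visibly break for $\mathcal E = \{\subseteq_1\}$, but in your sketch downward closure is invoked only for the preliminary observation and for ``patching'' disjunctive splits, not at the point where strong first-orderness is applied, so nothing blocks the counterexample.

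A secondary problem is that the normal form is asserted rather than proved, and the hardest case is not disjunction but universal quantification: in $\forall v(\D \tuple x v \vee \E \tuple x v)$ the split of $X[M/v]$ that would have to be communicated between the $\gamma$- and $\delta$-parts depends on $v$, while your outermost witness $\tuple u$ is chosen before $v$ is introduced and so cannot encode an arbitrary subset of the extended team. For comparison, the proof in \cite{galliani2019characterizing} works at the level of sentences rather than by a compositional normal form: each occurrence of an $\mathcal E$-atom $\E_i \tuple v_i$ is replaced by a literal $R_i \tuple v_i$ for a fresh relation symbol, yielding an $\FO(\mathcal D)$-sentence over an expanded signature that strong first-orderness converts into a first-order $\chi(\vec R\,)$; downward closure of the $\E_i$ then shows that $\M \models \phi$ if and only if there are relations $S_i$ with $(\M, \vec S\,) \models \chi$ and $(M, S_i) \in \E_i$ (the team at each occurrence satisfies $X_i(\tuple v_i) \subseteq S_i$, so downward closure recovers the atom), and the substantive remaining work -- entirely absent from your proposal -- is to express this existential second-order condition back in $\FO(\mathcal E)$.
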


As shown in \cite{galliani2020safe}, totality is safe for any collection of dependencies: 
\begin{proposition}[\cite{galliani2020safe}, Theorem 38]
	$\FO(\All, \DD) \equiv \FO(\DD)$ over sentences for any collection of dependencies $\DD$. 
	\label{propo:all_safe}
\end{proposition}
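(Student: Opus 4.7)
The plan is to show that every sentence $\phi \in \FO(\All, \DD)$ admits an equivalent sentence $\phi^* \in \FO(\DD)$, by inductively eliminating the occurrences of the totality atom. The key tool is the upwards-closedness of $\All$: since $\All \in \mathcal{UC}$, any team that satisfies $\All(\tuple x)$ continues to satisfy it after enlargement of its $\tuple x$-projection, so the verifier has considerable freedom to pick "maximal" strategies at existential and disjunction choices without losing $\All$-satisfaction.

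I would proceed by induction on the number of $\All$-atoms in $\phi$, picking an outermost occurrence of $\All(\tuple x)$ and rewriting the surrounding context to eliminate it. The basic equivalence at the heart of the rewriting is the following observation: whenever the current team is a singleton, we have $\exists \tuple x (\All(\tuple x) \wedge \psi) \equiv \forall \tuple x\, \psi$, since $\All(\tuple x)$ forces the existential function $H$ delivered by the verifier to cover all of $M^{|\tuple x|}$, which in turn forces the resulting team $X[H/\tuple x]$ to coincide with $X[M^{|\tuple x|}/\tuple x]$. More generally, I would prove a pushing lemma: at every point in $\phi$, the verifier can be assumed to play the maximal choice at each $\exists$ and the duplicating split $Y = Z = X$ at each $\vee$ lying strictly above the $\All$-atom, so that the projection of the current team onto $\tuple x$ is $M^{|\tuple x|}$ by the time the atom is evaluated. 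At that point $\All(\tuple x)$ becomes vacuous and can be replaced by $\top$, and the quantifiers binding $\tuple x$ can be universalized by the equivalence above.

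The main obstacle is justifying this "maximal strategy" assumption in the presence of arbitrary $\DD$-atoms that need not be upward- or downward-closed: enlarging a subteam at a $\vee$-split or using the full $M$ rather than an arbitrary $H(s)$ at an $\exists$ could cause some $\DD$-atom elsewhere to fail. To get around this, I would push the $\All$-atom upward through the formula by standard team-semantic rewritings (such as distributing $\All(\tuple x)$ over conjunctions and renaming the variables of $\tuple x$ so that the $\All$-atom ends up directly beneath the quantifiers introducing its variables), so that the maximization only affects variables and subteams local to the atom itself and cannot disrupt the verifier's strategies for the $\DD$-atoms. Since every step of this translation touches only $\All$ and the first-order connectives and leaves the $\DD$-atoms syntactically untouched, it lifts the strong first-orderness of $\FO(\All)$ from Theorem~\ref{thm:upwards_sfo} into the desired safety statement $\FO(\All, \DD) \equiv \FO(\DD)$ at the sentence level.
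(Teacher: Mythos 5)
First, a point of order: the paper does not prove this proposition at all --- it is imported verbatim from \cite{galliani2020safe} (Theorem 38) --- so your attempt can only be judged on its own merits, and unfortunately its core steps do not go through. Your pivotal equivalence $\exists \tuple x (\All(\tuple x) \wedge \psi) \equiv \forall \tuple x\, \psi$ is correct only over singleton teams, and in a sentence the team at the position of an $\All$-atom is almost never a singleton: any universal quantifier or disjunctive split above it destroys this. Over a non-singleton team $X$, rule \textbf{TS-$\exists$} only requires the \emph{union} $\bigcup_{s \in X} H(s)$ to cover $M^{|\tuple x|}$, not each individual $H(s)$, and that is a genuinely weaker condition than $X[M/\tuple x] \models \psi$. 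Concretely, take $\phi = \forall y\, \exists x\, (\All(x) \wedge =\!\!(y;x))$. This sentence asserts the existence of a surjection $M \to M$ and is true in every model, whereas $\forall y\, \forall x\, =\!\!(y;x)$ is false in every model with at least two elements; moreover the ``maximal strategy'' $H(s)=M$ is simply not available to the verifier here, because it falsifies the dependence atom. This one example already defeats both the key equivalence and the pushing lemma as stated.

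The proposed repair --- rewriting $\phi$ so that $\All(\tuple x)$ sits directly beneath the quantifiers introducing $\tuple x$ --- is not available either. Atoms do not commute with disjunction in Team Semantics: since $\All$ is upwards closed, $(\All(\tuple x) \wedge \psi) \vee \chi$ is in general strictly stronger than $\All(\tuple x) \wedge (\psi \vee \chi)$, so the atom cannot be hoisted past a $\vee$; and in the example above the variable $x$ is introduced \emph{below} the $\forall y$ that makes the team non-singleton, so there is nowhere safe to hoist it to. Finally, the closing appeal to Theorem \ref{thm:upwards_sfo} conflates strong first-orderness with safety: that theorem gives $\FO(\mathcal{UC}, \const) \equiv \FO$, and the paper explicitly records (via $\const$ and $\subseteq_1$) that a strongly first order atom need not be safe for other dependencies, so no such ``lifting'' is automatic. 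A correct argument must produce, uniformly in $\DD$, an $\FO(\DD)$ sentence encoding the covering condition $\bigcup_s H(s) = M^{|\tuple x|}$ without perturbing the verifier's choices at the $\DD$-atoms; that is precisely the nontrivial content of Theorem 38 of \cite{galliani2020safe}, and the maximal-strategy heuristic does not recover it.
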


%Using this result, in [CITE] it was possible to provide a partial characterization of the strongly first order dependencies that are downwards closed and \emph{relativizable} in the sense of the following definition: 
The main result of \cite{galliani2019characterizing} is a complete characterization of the strongly first order dependencies $\D$ that are downwards closed, have the \emph{empty team property} and are \emph{relativizable}:
\begin{definition}[Empty Team  Property]
	A dependency $\D$ has the \emph{empty team property} if $(M, \emptyset) \in \D$ for all domains of discourse $M$. 
\end{definition}
\begin{definition}[Relativization of a dependency]
    Let $P$ be a unary predicate and let $\D$ be a $k$-ary generalized dependency. Then the relativization of $\D$ to $P$ is the $k$-ary atom $\D^{(P)}$, whose semantics - for models $\M$ whose signature contains the predicate $P$ - is given by 
    \begin{description}
        \item[TS-$\D^{(P)}$:] $\M \models_X \D^{(P)} \tuple x$ if and only if $(P^{\mathcal M}, X(\tuple x)) \in \D$.
    \end{description}
    
	A dependency $\D$ is relativizable if every sentence of $\FO(\D^{(P)})$, i.e. of First Order Logic with Team Semantics augmented with the above rule, is equivalent to some sentence of $\FO(\D)$. A family of dependencies $\DD$ is relativizable if $\FO(\DD^{(P)}) \equiv \FO(\DD)$, where $\DD^{(P)} = \{\D^{(P)} : \D \in \DD\}$.
\end{definition}
As discussed in \cite{galliani2019characterizing}, it is possible to find examples of non-relativizable generalized dependencies.\footnote{The existence of non-relativizable dependencies was first observed in (Barbero, personal communication).} However, all the dependencies studied in the context of Team Semantics so far are relativizable, and no strongly first order non-relativizable dependency is known. Most dependencies of interest have the stronger property of even being \emph{domain independent}  in the sense of the following definition (first found in \cite{kontinen2016decidable}): 
\begin{definition}
    A $k$-ary generalized dependency $\D$ is \emph{domain independent} if and only if 
    \[
        (M, R) \in \D, R \subseteq (M')^k \subseteq M^k \Rightarrow (M', R) \in \D
    \]
\end{definition}
In other words, a dependency $\D$ is domain independent if the truth of an atom $\D \tuple v$ in a team $X$ over a model $\M$ does not depend on the existence or non-existence in $\M$ of elements that do not appear in $X(\tuple v)$. Functional dependence atoms, independence atoms, inclusion atoms and exclusion atoms are all domain-independent, and it would be easy to argue that domain-independence should be a requirement for any reasonable generalized dependency. A simple example of a non-domain-independent, strongly first order dependency is given by the totality atom $\textbf{All}$ seen above. However, this atom is still relativizable: indeed, it is straightforward to check that 
\[
	\textbf{All}^{(P)} (\tuple x) \equiv \left(\bigwedge_{x \in \tuple x} Px\right) \wedge \exists \tuple v \left(\left(\bigvee_{v \in \tuple v} \lnot P v  \vee \tuple v = \tuple x \right)  \wedge \All(\tuple v)\right)
\]
(in order to check that $\tuple x$ contains all tuples of elements in $P^k$, we add to it tuples of elements in $M^k \backslash P^k$ and we check if the result contains all tuples in $M^k$). Currently, no example of non-relativizable strongly first order dependency is known. 

As already mentioned, in \cite{galliani2019characterizing} a full characterization of downwards closed, relativizable, strongly first order dependencies with the empty team property was found: 
\begin{theorem}[\cite{galliani2019characterizing}, Theorem 4.5]
Let $\D$ be a downwards closed, relativizable generalized dependency with the empty team property. Then $\D$ is strongly first order if and only if it is definable in $\FO(\const)$. 
	\label{thm:sfo_dc}
\end{theorem}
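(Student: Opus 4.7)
The plan is to prove the two directions separately. For the ``if'' direction, suppose $\D \tuple x$ is equivalent to some formula $\phi(\tuple x) \in \FO(\const)$. Then any sentence of $\FO(\D)$ can be syntactically rewritten as a sentence of $\FO(\const)$ by replacing each occurrence of $\D \tuple y$ with $\phi(\tuple y)$; since $\FO(\const) \equiv \FO$ (an instance of Theorem \ref{thm:upwards_sfo}), we obtain $\FO(\D) \equiv \FO$, i.e., $\D$ is strongly first order.

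For the ``only if'' direction, I would aim to exhibit an $\FO(\const)$ formula of the specific shape
\[
\phi_\D(\tuple x) \;:=\; \exists \tuple c \bigl( \const[\tuple c] \wedge \gamma(\tuple x, \tuple c) \bigr),
\]
with $\gamma$ first order, that defines $\D\tuple x$ on every team. Such a formula corresponds, via Proposition \ref{propo:flat} and Proposition \ref{propo:exists1}, to the first-order condition
\[
\exists \tuple c\, \forall \tuple z \bigl( R\tuple z \rightarrow \gamma(\tuple z, \tuple c) \bigr)
\]
holding of $(M, R := X(\tuple x))$; that is, $\D$ must be defined by a first-order sentence in this ``Horn-in-$R$'' form. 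This is the shape one expects of a relativizable, downwards-closed first-order dependency that is safe for the empty relation, and the bulk of the work is showing that strong first-orderness forces this shape.

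The main steps I would carry out are the following. First, use relativizability and strong first-orderness to obtain, for suitable sentences of $\FO(\D^{(P)})$, equivalent first-order sentences. Second, apply this machinery to a sentence that captures the holding of $\D$ on the relation induced by a team when the domain is restricted to a unary predicate $P$ (an adaptation of the construction used in the proof that every strongly first order dependency is first order), producing a first-order sentence $\psi(P, R)$ characterizing $\D^{(P)}$. Third, exploit downwards closure in $R$ together with the empty team property to show that $\psi(P, R)$ is equivalent, over all $(M, P, R)$ with $R \subseteq P^k$, to a Horn-in-$R$ sentence of the displayed form. Fourth, convert this Horn form into the desired $\FO(\const)$ formula $\phi_\D(\tuple x)$ via Proposition \ref{propo:exists1}, and finally invoke relativizability to move from $\D^{(P)}$ back to $\D$.

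The main obstacle is the third step: justifying that downward closure plus the empty team property pin $\psi(P, R)$ down to Horn-in-$R$ form. Downward closure provides a preservation property (truth is preserved when $R$ shrinks) and the empty team property gives a base case at $R = \emptyset$; together, I would expect this to be handled either by appealing to a syntactic preservation theorem for first-order sentences monotone in a relation symbol, or by a direct quantifier-rearrangement argument that extracts the block of ``guessed'' parameters $\tuple c$ from the strong first order translation and reduces the remaining formula to a universal statement over $R$. The delicate point is to ensure that the extracted $\gamma$ is genuinely first order and does not require a dependent choice of parameters for each tuple in $R$ — and this is precisely where the bounded second-order content enforced by strong first-orderness is used.
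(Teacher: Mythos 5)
First, note that this theorem is not proved in the present paper at all: it is imported from \cite{galliani2019characterizing}, and what the paper actually extracts from that proof are the two ingredients recorded as Theorems~\ref{thm:max_exist} and~\ref{thm:max_def}. Your ``if'' direction is correct and is the standard argument. Your ``only if'' direction identifies essentially the right target normal form --- $\D$ should be defined by (a finite disjunction of) sentences $\exists \tuple c\, \forall \tuple z(R\tuple z \rightarrow \gamma(\tuple z,\tuple c))$ with $\gamma$ over the empty signature, which is exactly what Theorem~\ref{thm:max_def} together with downwards closure and the empty team property yields, and which translates into $\FO(\const)$ via Propositions~\ref{propo:flat} and~\ref{propo:exists1} --- but the route you sketch for reaching it breaks down at what you yourself flag as ``the main obstacle,'' namely the third step.

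The gap is this: downwards closure, the empty team property, relativizability, and the existence of a first-order sentence $\psi(P,R)$ defining $\D^{(P)}$ do \emph{not} suffice to force the Horn-in-$R$ form, so neither a monotone-preservation theorem (which only yields a sentence in which $R$ occurs negatively) nor a quantifier rearrangement of $\psi$ can close the argument. The functional dependence atom is a counterexample to any such derivation: it is downwards closed, has the empty team property, is relativizable, and is defined by a first-order sentence that is negative in $R$, yet it is not strongly first order and is not of the required form (its maximal relations are the graphs of arbitrary total functions, a family that cannot be captured by finitely many empty-signature formulas with finitely many parameters). Since the only information your step two extracts from strong first-orderness is that $\D^{(P)}$ is first-order definable --- something functional dependence already satisfies --- strong first-orderness must be re-invoked in step three in an essential, model-theoretic way, and your sketch does not say how. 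The actual argument (visible in Theorems~\ref{thm:max_exist} and~\ref{thm:max_def}, and mirrored in this paper's Lemmas~\ref{lemma:nostair} and~\ref{lemma:eta_M}) runs through \emph{maximal} relations: one uses compactness and L\"owenheim--Skolem, applied to the first-order translations of suitably chosen $\FO(\D)$ and $\FO(\D^{(P)})$ sentences, to show that every $(M,R)\in\D$ lies below some $(M,R')\in\D_{\max}$, that each such $R'$ is definable over the empty signature with parameters, and that finitely many defining formulas suffice; only then do downwards closure and the empty team property convert ``$R$ is contained in some maximal relation'' into the $\FO(\const)$ definition. Without that machinery, the ``bounded second-order content enforced by strong first-orderness'' you appeal to is never actually used.
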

\begin{corollary}
Let $\DD$ be a family of downwards closed, relativizable generalized dependencies with the empty team property. Then $\mathcal D$ is strongly first order if and only if all dependencies $\D \in \mathcal D$ are strongly first order. 
\end{corollary}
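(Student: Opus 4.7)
The forward direction is immediate: if $\FO(\mathcal D) \equiv \FO$, then since $\FO(\D) \subseteq \FO(\mathcal D)$ for every $\D \in \mathcal D$, each individual $\D$ is strongly first order. So the interesting content is the converse.

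For the converse, my plan is to apply Theorem \ref{thm:sfo_dc} dependency-by-dependency and then glue the resulting translations compositionally. Concretely, assume every $\D \in \mathcal D$ is strongly first order. Since each such $\D$ is also downwards closed, relativizable, and has the empty team property by hypothesis, Theorem \ref{thm:sfo_dc} yields for each $\D \in \mathcal D$ an $\FO(\const)$ formula $\phi_\D(\tuple v)$ (with $|\tuple v|$ equal to the arity of $\D$) such that $\M \models_X \D \tuple v \iff \M \models_X \phi_\D(\tuple v)$ for every model $\M$ and every team $X$ whose domain contains $\tuple v$. By renaming bound variables if necessary, I may assume that the bound variables of $\phi_\D$ are disjoint from any fixed reservoir of ``working'' variables, so that substitution $\phi_\D[\tuple x / \tuple v]$ into any context does not clash.

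Next I define a translation $(\cdot)^* : \FO(\mathcal D) \to \FO(\const)$ by structural recursion: literals and constancy atoms translate to themselves; the connectives $\vee$, $\wedge$, $\exists v$, $\forall v$ commute with $(\cdot)^*$; and each dependency atom $\D \tuple x$ with $\D \in \mathcal D$ is replaced by $\phi_\D[\tuple x / \tuple v]$. A straightforward induction on the construction of $\psi \in \FO(\mathcal D)$ shows $\M \models_X \psi \iff \M \models_X \psi^*$ for every team $X$ whose domain contains the free variables of $\psi$: the atomic case for $\D$ is the defining property of $\phi_\D$, the connective and quantifier cases are immediate from the Team Semantics rules TS-$\vee$, TS-$\wedge$, TS-$\exists$, TS-$\forall$, which are shared by both logics. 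Applied to sentences, this gives $\FO(\mathcal D) \subseteq \FO(\const)$, and since $\{\const\} \subseteq \mathcal{UC} \cup \{\const\}$ is strongly first order by Theorem \ref{thm:upwards_sfo}, we conclude $\FO(\mathcal D) \equiv \FO(\const) \equiv \FO$, as required.

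The only place where one has to be a little careful is the variable-substitution bookkeeping in the atomic step: one must know that the equivalence $\M \models_X \D \tuple v \iff \M \models_X \phi_\D(\tuple v)$ asserted by Theorem \ref{thm:sfo_dc} really holds \emph{uniformly} in the team $X$ and the tuple $\tuple v$, so that we may substitute arbitrary variables $\tuple x$ for $\tuple v$ while working inside a larger formula. This is the standard meaning of ``definable in $\FO(\const)$'' for a generalized dependency (compare the proof that strongly first order atoms must be first order earlier in the excerpt, which uses exactly this kind of substitutional reading), so no real obstacle arises; the corollary is essentially a direct consequence of Theorem \ref{thm:sfo_dc} together with the compositional nature of Team Semantics.
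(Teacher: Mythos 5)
Your proposal is correct and follows essentially the same route as the paper: both directions are handled identically, with the converse obtained by invoking Theorem~\ref{thm:sfo_dc} to get an $\FO(\const)$ definition of each atom and then replacing atoms compositionally to reduce $\FO(\DD)$ to $\FO(\const)\equiv\FO$. You merely spell out the substitution bookkeeping that the paper leaves implicit.
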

\begin{proof}
	If $\DD$ is strongly first order then every $\D \in \DD$ is strongly first order, since every sentence of $\FO(\D)$ is a sentence of $\FO(\DD)$. 

	Conversely, suppose that every $\D \in \DD$ is downwards closed, relativizable and strongly first order. Then by Theorem \ref{thm:sfo_dc} every such $\D$ is definable in $\FO(\const)$, and so every sentence of $\FO(\DD)$ is equivalent to some sentence of $\FO(\const)$ (and therefore to some first order sentence).
\end{proof}

Two of the results used in \cite{galliani2019characterizing} to prove Theorem \ref{thm:sfo_dc} will be of use: 
\begin{definition}
	Let $\D$ be any generalized dependency. Then $\D_{\max} = \{(M, R) : (M, R) \in \D, \forall R' \supsetneq R ~ (M, R') \not \in \D\}$ is the dependency satisfied by the \emph{maximal} teams that satisfy $\D$.
\end{definition}
\begin{theorem}[\cite{galliani2019characterizing}, Corollary 4.3]
	Let $\D(R)$ be a downwards closed, strongly first order, relativizable dependency and let $(M, R) \in \D$. Then there exists some $R' \supseteq R$ such that $(M, R') \in \D_{\max}$. 
	\label{thm:max_exist}
\end{theorem}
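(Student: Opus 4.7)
The plan is to exploit strong first orderness to put both $\D$ and $\D_{\max}$ into first-order form, and to combine this with relativizability to construct the desired maximal extension of $R$ from finite approximations, since a naive Zorn argument does not apply: chains in $\D$ above $R$ need not have upper bounds in $\D$ for FO downwards closed dependencies in general.

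First, since $\D$ is strongly first order, by the proposition immediately preceding Theorem~\ref{thm:upwards_sfo} there is a defining FO sentence $\phi(R)$; downward closure together with Lyndon's monotonicity theorem lets us take $R$ occurring only negatively in $\phi$. Then $\D_{\max}$ is itself first-order-definable, by
\[
    \chi(R) \;:=\; \phi(R) \wedge \forall \tuple z \Bigl( R(\tuple z) \vee \lnot \phi\bigl[R(\tuple u) \mapsto R(\tuple u) \vee \tuple u = \tuple z\bigr] \Bigr),
\]
so the theorem reduces to producing some $R' \subseteq M^k$ with $R \subseteq R'$ and $(M, R') \models \chi$. For this one wants to reduce to the finite case via relativization: for each sufficiently large finite $P \subseteq M$ with $R \subseteq P^k$, the relativized dependency $\D^{(P)}$ inherits downward closure and lives on the finite set $P^k$, so a trivial finite induction on $|P^k \setminus R|$ produces some $R_P \subseteq P^k$ with $R \subseteq R_P$ and $(P, R_P) \in (\D^{(P)})_{\max}$. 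Because $\D$ is relativizable and strongly first order, the existence and properties of such local maximal extensions are first-order-expressible properties of $(M, R, P)$, available as inputs to a compactness argument.

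The final step glues the local $R_P$'s into a single $R' \subseteq M^k$ with $(M, R') \in \D_{\max}$. One natural route is to consider the first-order theory
\[
    T \;:=\; \mathrm{Diag}(M, R) \cup \{R(\tuple a) \to R'(\tuple a) : \tuple a \in M^k\} \cup \{\chi(R')\}
\]
in the signature of $(M, R)$ expanded by a fresh $k$-ary relation symbol $R'$; the local extensions $R_P$ certify finite satisfiability of $T$, so by FO compactness $T$ has a model $(N, R^*)$. The delicate step --- and the technical core of the theorem, where I expect the main difficulty to lie --- is to reflect the maximality of $R^*$ in $(N, R^*)$ back down to a genuine $R' \subseteq M^k$ maximal in $\D$ over $M$: downward closure of $\D$ yields $R^* \cap M^k \in \D$ immediately, but not its maximality, and closing this gap requires combining the FO-definability of $\chi$ with the relativizability of $\D$ to push maximality through the elementary embedding $(M, R) \hookrightarrow (N, R^*)$.
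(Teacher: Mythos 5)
First, a point of order: the paper does not prove this statement at all --- it is imported verbatim from \cite{galliani2019characterizing} as Corollary 4.3 --- so your proof must stand on its own. The one solid step in it is the observation that, given downward closure, $\D_{\max}$ is first-order definable by your $\chi(R)$ (adding any single tuple destroys membership iff the relation is maximal); that is correct, and the Lyndon normal form is not even needed for it. Everything after that has genuine gaps. The finite reduction requires a finite $P$ with $R \subseteq P^k$, which already presupposes $R$ finite; worse, it conflates \emph{relativizability} with \emph{domain independence}: that $(P, R_P)$ is maximal for $\D$ over the subdomain $P$ says nothing about membership in $\D$ of any relation over the domain $M$, since relativizability is a statement about the expressive power of the logic $\FO(\D^{(P)})$, not a transfer principle for $\D$-membership across domains (the totality atom $\All$ illustrates how drastically $\D$ over $P$ and over $M$ can differ). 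Consequently the local $R_P$'s do not certify finite satisfiability of $T$: a finite fragment of $T$ containing sentences from the diagram of the infinite structure $(M,R)$ cannot be modelled over the finite domain $P$, and what is actually needed to satisfy such a fragment is a $\D$-maximal relation, over a structure (elementarily) matching $M$, containing a prescribed finite subset of $R$ --- an instance of the very theorem being proved. Finally, the pull-back from $(N,R^*)$ to $M$ is the hard part and you leave it open; even the one concrete claim made there is off, since downward closure yields $(N, R^* \cap M^k) \in \D$, i.e.\ membership over the domain $N$, not $(M, R^* \cap M^k) \in \D$.

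The deeper problem is that your argument uses strong first orderness only to obtain a first-order definition of $\D$, which is a strictly weaker hypothesis: the exclusion atom, for instance, is first order and downwards closed but not strongly first order. The actual proof in \cite{galliani2019characterizing} uses strong first orderness essentially, in the style of Lemma~\ref{lemma:nostair} of the present paper: one codes a putative increasing chain in $\D$ without an upper bound in $\D$ into a single model, writes an $\FO(\D)$ sentence asserting that the union of some initial segment of the chain falls outside $\D$ (a genuinely second-order-looking statement made possible by the team-semantic disjunction), replaces it by an equivalent first-order sentence, and derives a contradiction from non-standard elementary extensions via compactness and the L\"owenheim--Skolem theorem; existence of maximal extensions then follows because unions of chains in $\D$ above $R$ remain in $\D$. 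Nothing in your proposal plays this role, and without such an ingredient the compactness scheme you set up cannot be closed.
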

\begin{theorem}[\cite{galliani2019characterizing}, Proof of Theorem 4.5\footnote{This statement can be extracted from the first part of the proof of Theorem 4.5 of \cite{galliani2019characterizing}. Note in particular that the empty team property is not required for this result, although it is required for the rest of the proof of Theorem 4.5.}]
	Let $\D(R)$ be a downwards closed, strongly first order, relativizable dependency. Then there exists some first order sentence 
	\[
		\D^m(R) = \bigvee_{i = 1}^n \exists \tuple y \forall \tuple x(R \tuple x \leftrightarrow \theta_i(\tuple x, \tuple y)), 
	\]
	where each $\theta_i$ is a first order formula over the empty signature, such that if $(M, R) \in \D_{\max}$ then $(M, R) \models \D^m(R)$. 
	\label{thm:max_def}
\end{theorem}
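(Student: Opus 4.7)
The plan is to extract a first order sentence defining $\D$ itself (using strong first-orderness), use downwards closure to express membership in $\D_{\max}$, and then argue that every maximal relation must be first-order definable over its domain from parameters in a controlled way, yielding the required finite disjunction.

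The first step is routine: by the same trick used in the proof that every strongly first order $\D$ is first order, the $\FO(\D)$ sentence $\forall \tuple z(\lnot R\tuple z \vee (R\tuple z \wedge \D\tuple z))$ translates, under the hypothesis $\FO(\D)\equiv\FO$, to a first order sentence $\phi_\D(R)$ over $\{R\}$ defining $\D$. Downwards closure then lets me write a first order formula for $\D_{\max}$: if $\phi_\D^{+\tuple z}(R)$ is $\phi_\D(R)$ with each atom $R\tuple u$ replaced by $R\tuple u \vee \tuple u = \tuple z$, then $(M,R)\in\D_{\max}$ iff $(M,R)\models \phi_\D(R) \wedge \forall \tuple z(R\tuple z \vee \lnot\phi_\D^{+\tuple z}(R))$.

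The heart of the proof is then to show that every $(M,R)\in\D_{\max}$ has $R$ first order definable over $M$ from a finite tuple of parameters, using formulas drawn from a fixed finite pool independent of $M$. Since the base signature is empty, any such formula $\theta(\tuple x,\tuple y)$ is a Boolean combination of equalities, and for any fixed quantifier rank and fixed $|\tuple y|$ there are only finitely many such formulas up to logical equivalence, so finiteness of the pool reduces to a uniform \emph{bound} on both the rank and the number of parameters needed. I would try to obtain this bound as follows: given $(M,R)\in\D_{\max}$, the witnesses for the quantifiers in $\phi_\D$ that certify $R\in\D$, together with the tuples $\tuple z\notin R$ witnessing that each one-element extension falsifies $\phi_\D$, can be collected into a parameter set $\tuple y$ of size depending only on the rank of $\phi_\D$; an Ehrenfeucht--Fraïssé-style argument then shows that if another relation $R'\subseteq M^k$ realizes the same quantifier-free-with-equality type over $\tuple y$ as $R$, then $R'$ also lies in $\D_{\max}$. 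Relativizability enters here to allow us to restrict attention to the definable subdomain generated by $\tuple y$, ensuring that the construction of witnesses stays within a controlled, uniformly definable part of $M$.

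The main obstacle is this uniform-definability-with-parameters claim: showing that finitely many schemata $\theta_1,\ldots,\theta_n$ suffice for \emph{all} maximal relations across \emph{all} models. Downwards closure plus first order definability of $\D$ are not by themselves enough --- without relativizability one could imagine maximal $R$'s whose definition essentially depends on the ambient domain in an unbounded way. The delicate step is therefore to use relativizability (and Theorem \ref{thm:max_exist} to know maximal extensions exist) to bound the amount of information about $M$ needed to reconstruct $R$, thereby reducing the reconstruction to a Boolean combination of equalities in the parameters. Once this is established, taking $\D^m(R)$ to be the disjunction of $\exists \tuple y\,\forall\tuple x(R\tuple x\leftrightarrow\theta_i(\tuple x,\tuple y))$ over the finite enumeration of admissible $\theta_i$ completes the argument.
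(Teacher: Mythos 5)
First, a point of reference: the paper does not prove this statement at all --- it is imported from the proof of Theorem 4.5 of \cite{galliani2019characterizing}, so there is no in-paper argument to compare against, and your proposal must stand on its own. Your first two steps are fine but carry no real weight: that $\D$ is defined by a first order sentence $\phi_\D(R)$ is the paper's earlier observation, and for downwards closed $\D$ the reduction of maximality to ``no one-point extension satisfies $\phi_\D$'' is correct. Neither, however, touches the actual content of the theorem, which is that every maximal $R$ is \emph{itself} definable by an empty-signature formula with parameters --- i.e.\ that $R$ is a union of equality-types over some finite tuple $\tuple y$, equivalently fixed setwise by every permutation of $M$ fixing $\tuple y$ pointwise --- with the defining formulas drawn from a single finite pool valid across all models.

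Your argument for that core claim has two concrete failures. First, the parameter tuple you describe is not finite: there is one falsified extension $\phi_\D^{+\tuple z}$ for \emph{each} of the (possibly infinitely many) $\tuple z \notin R$, each with its own witnesses, so these cannot be collected into a tuple of length bounded by the rank of $\phi_\D$. Second, and more fundamentally, the Ehrenfeucht--Fra\"iss\'e step proves the wrong statement: showing that any $R'$ realizing the same bounded-rank type over $\tuple y$ as $R$ also lies in $\D_{\max}$ only shows that $\D_{\max}$ is closed under type-preserving replacement; it does not force $\sigma(R) = R$ for permutations $\sigma$ fixing $\tuple y$ (there can be many distinct maximal relations in one orbit), and hence does not yield definability of $R$. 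Downwards closure does not repair this, since $R \cup \sigma(R)$ need not belong to $\D$. The appeal to relativizability (``restrict attention to the definable subdomain generated by $\tuple y$'') is a gesture rather than an argument. You candidly identify this uniform-definability claim as ``the main obstacle,'' but that obstacle \emph{is} the theorem: it is exactly what the nontrivial model-theoretic argument in \cite{galliani2019characterizing} establishes, so the proposal as written reduces the statement to itself. (A minor further remark: in this paper's own style the finiteness of the disjunction would come from a compactness argument as in Lemma \ref{lemma:eta_M}, rather than from an a priori bound on quantifier rank and parameter count; but that issue is moot until the definability of maximal relations is actually proved.)
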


\cite{galliani2019nonjumping} contains a full characterization of strongly first order, relativizable, \emph{non-jumping}\footnote{In brief, a dependency $\D$ is non-jumping if whenever $(M, R) \in \D$ there is some $R' \supseteq R$ for which $(M, R') \in \D_{\max}$, and for this $R'$ there is no $S$ with $R \subseteq S \subseteq R'$ for which $(M, R) \not \in \D$. Many, but not all, of the generalized dependencies studied in the context of Team Semantics are non-jumping.} dependencies; but as this result is not necessary for the present work, we will not discuss it in detail here. 

More relevant for the purposes of this work is the observation that, even though so far we have been talking about generalized dependency atoms being first order or safe, the same concepts can be applied to \emph{connectives} in Team Semantics. Two connectives of particular interest in the study of Team Semantics are the \emph{Global} (or \emph{Boolean}) \emph{Disjunction} $\phi_1 \sqcup \phi_2$ and the \emph{Possibility Operator} $\diamond \phi$, whose rules are given by 
\begin{description}
\item[TS-$\sqcup$:] $\M \models_X \phi_1 \sqcup \phi_2$ if and only if $\M \models_X \phi_1$ or $\M \models_X \phi_2$; 
\item[TS-$\diamond$:] $\M \models_X \diamond \phi$ if and only if there exists some $Y \subseteq X$, $Y \not = \emptyset$, such that $\M \models_Y \phi$. 
\end{description}
As shown in \cite{galliani2019nonjumping}, global disjunction is not safe for arbitrary dependencies, but it is safe for strongly first order dependencies. We will also need this result for the case of relativized dependencies, so I report the adapted proof (which is not significantly different from the one in \cite{galliani2019nonjumping}):
\begin{proposition}[\cite{galliani2019nonjumping}, Proposition 14]
	If $\DD$ is a strongly first order family of dependencies then every sentence of $\FO(\sqcup, \DD)$ (i.e. of First Order Logic with Team Semantics, augmented with global disjunctions and with the -- possibly relativized -- atoms in $\DD$) is equivalent to some sentence of $\FO$. If $\DD$ is also relativizable, every sentence of $\FO(\sqcup, \DD^{(P)})$ is equivalent to some sentence of $\FO$. 
	\label{propo:sqcupsafe_sfo}
\end{proposition}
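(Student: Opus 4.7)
The plan is to show that every occurrence of $\sqcup$ in a formula of $\FO(\sqcup, \DD)$ can be ``lifted'' all the way to the top of the formula, after which the outermost $\sqcup$ in a sentence becomes an ordinary meta-level disjunction between sentences of $\FO(\DD)$, at which point strong first-orderness finishes the job.

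First I would establish the following distributive laws, by direct unfolding of the Team Semantics clauses:
\begin{align*}
\phi \wedge (\psi_1 \sqcup \psi_2) &\equiv (\phi \wedge \psi_1) \sqcup (\phi \wedge \psi_2),\\
\phi \vee (\psi_1 \sqcup \psi_2) &\equiv (\phi \vee \psi_1) \sqcup (\phi \vee \psi_2),\\
\exists v (\psi_1 \sqcup \psi_2) &\equiv (\exists v \psi_1) \sqcup (\exists v \psi_2),\\
\forall v (\psi_1 \sqcup \psi_2) &\equiv (\forall v \psi_1) \sqcup (\forall v \psi_2).
\end{align*}
The only case that requires a moment's thought is the one for $\vee$, since that connective itself is a team-splitting operation; but given a decomposition $X = Y \cup Z$ witnessing the left-hand side, one obtains either a $(Y,Z)$-witness for $\phi \vee \psi_1$ or one for $\phi \vee \psi_2$, and conversely any witness for either disjunct of the right-hand side witnesses the left-hand side. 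The existential quantifier case is analogous, using the function $H$ in place of the partition.

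Next, by structural induction on $\phi \in \FO(\sqcup, \DD)$, I would show that $\phi$ is equivalent to some formula of the form $\phi_1 \sqcup \phi_2 \sqcup \cdots \sqcup \phi_n$ in which each $\phi_i$ is $\sqcup$-free, hence in $\FO(\DD)$: literals and atoms in $\DD$ are already of this shape with $n = 1$, and in the inductive step one applies the appropriate distributive law above to push every $\sqcup$ outwards past $\wedge, \vee, \exists, \forall$. When $\phi$ is a sentence, unfolding the definition $\M \models \phi \iff \M \models_{\{\emptyset\}} \phi$ together with the clause for $\sqcup$ tells us that $\M \models \phi$ if and only if $\M \models \phi_i$ for some $i$. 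Since $\DD$ is strongly first order, each $\phi_i$ is equivalent to some first order sentence $\phi_i^\star$, and $\bigvee_{i=1}^n \phi_i^\star$ is then a first order sentence equivalent to $\phi$.

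For the relativizable case, observe that the distributive laws above depend only on the Team Semantics clauses for $\wedge, \vee, \exists, \forall$, so the same argument applies to $\FO(\sqcup, \DD^{(P)})$, reducing every sentence to a meta-disjunction of sentences of $\FO(\DD^{(P)})$; and by relativizability these are equivalent to sentences of $\FO(\DD)$, which by strong first-orderness are equivalent to first order sentences. The principal obstacle is the careful verification of the distributive law over $\vee$, where one has to check that no information is lost by allowing the two disjuncts of the outer $\sqcup$ to be witnessed by different partitions of $X$.
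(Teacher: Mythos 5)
Your proposal is correct and follows essentially the same route as the paper's own proof: distribute $\sqcup$ over $\wedge$, $\vee$, $\exists$, $\forall$ to pull every global disjunction to the top, observe that over the singleton team $\{\emptyset\}$ the outer $\sqcup$ collapses to an ordinary disjunction of sentences, and then apply strong first-orderness (and, in the relativized case, relativizability) to each disjunct. The one point you flag as the principal obstacle -- the distributive law over $\vee$ -- is verified exactly as you describe, so there is nothing to add.
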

\begin{proof}
	It suffices to observe that global disjunction commutes with all connectives: $(\phi \sqcup \psi) \wedge \theta \equiv (\phi \wedge \theta) \sqcup (\psi \wedge \theta)$, $(\phi \sqcup \psi) \vee \theta \equiv (\phi \vee \theta) \sqcup (\psi \vee \theta)$, $\phi \wedge (\psi \sqcup \theta) \equiv (\phi \wedge \psi) \sqcup (\phi \wedge \theta)$, $\phi \vee (\psi \sqcup \theta) \equiv (\phi \vee \psi) \sqcup (\phi \vee \theta)$, $\exists v (\phi \sqcup \psi) \equiv (\exists v \phi) \sqcup (\exists v \psi)$, $\forall v (\phi \sqcup \psi) \equiv (\forall v \phi) \sqcup (\forall v \psi)$.\footnote{Of course, it is not the case that $\forall v (\phi \vee \psi) \equiv (\forall v \phi) \vee (\forall v \psi)$. This illustrates the difference between global disjunction $\sqcup$ and classical disjunction $\vee$: in the first case, the \emph{whole} team has to satisfy the left or the right disjunct, while in the second the team can be ``split'' between the disjuncts.} Also, global disjunction is easily seen to be associative. Therefore, every sentence of $\FO(\sqcup, \DD)$ (resp. $\FO(\sqcup, \DD^{(P)})$) is equivalent to a global disjunction of sentences of $\FO(\DD)$ (resp. $\FO(\DD^{(P)})$, each one of which is equivalent to some first order sentence. Finally, we observe that if $\phi$ and $\psi$ are first order sentences $\M \models \phi \sqcup \psi$ if and only if $\M \models \phi \vee \psi$, because the team $\{\emptyset\}$ containing only the empty assignment cannot be subdivided further: therefore, every sentence of $\FO(\DD)$ (resp. $\FO(\DD^{(P)})$) is equivalent to some first order sentence. 
\end{proof}

On the other hand, in \cite{galliani2020safe} it was shown that $\diamond$ is safe for any collection of dependencies, in the sense that 
\begin{proposition}[\cite{galliani2020safe}, Corollary 42]
Let $\DD$ be any family of generalized dependencies, not necessarily strongly first order. Then every sentence of $\FO(\diamond, \DD)$ (i.e. of First Order Logic with Team Semantics, augmented with the possibility operator $\diamond$ and the atoms in $\DD$) is equivalent to some sentence of $\FO(\DD)$. 
	\label{propo:diamond_safe}
\end{proposition}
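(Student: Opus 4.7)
My plan is to prove this by structural induction on $\Phi \in \FO(\diamond, \DD)$, progressively moving each occurrence of $\diamond$ outward through the formula until it reaches the top position, where it can be dropped. The base observation justifying the last step is that for any $\Psi$, $\M \models_{\{\emptyset\}} \diamond \Psi$ if and only if $\M \models_{\{\emptyset\}} \Psi$, since the only non-empty subteam of the initial sentence team $\{\emptyset\}$ is $\{\emptyset\}$ itself; so a $\diamond$ at the outermost position is trivially removable.

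The technical core is a collection of commutation equivalences for $\diamond$ with each connective, derived directly from the semantic rules. The cleanest is $\diamond \exists v \phi \equiv \exists v \diamond \phi$: a non-empty subteam $Y \subseteq X$ paired with a non-empty-valued choice function $H : Y \to \mathcal{P}(M) \setminus \{\emptyset\}$ determines exactly the same non-empty teams inside $X[M/v]$ as a non-empty-valued $H' : X \to \mathcal{P}(M) \setminus \{\emptyset\}$ followed by selection of a non-empty subteam of $X[H'/v]$. The same calculation yields $\forall v \diamond \phi \equiv \diamond \exists v \phi$, since both express the existence of some non-empty subteam of $X[M/v]$ satisfying $\phi$; so $\diamond$ can always be pulled past a quantifier, possibly turning an outer $\forall$ into an $\exists$ in the process. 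Analogous but more delicate equivalences for $\wedge$ and $\vee$ would match the team-splitting of the propositional connectives to the subteam selection performed by $\diamond$; iterated, these relocations push every $\diamond$ to the top of $\Phi$, where the base observation eliminates it.

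The main obstacle I anticipate is the case $\diamond \forall v \phi$, where the naive commutation to $\forall v \diamond \phi$ is strictly weaker: $\diamond \forall v \phi$ demands a non-empty $Y \subseteq X$ whose full rectangular extension $Y[M/v]$ satisfies $\phi$, whereas $\forall v \diamond \phi$ merely asks for \emph{some} non-empty subteam of $X[M/v]$ to satisfy $\phi$. My plan to handle this is to introduce auxiliary quantifiers that explicitly encode the choice of the subteam $Y$, exploiting the fact that we only need sentence-level equivalence and may therefore freely insert fresh existentially or universally quantified variables inside $\Phi$, which will be absorbed once the outer quantifier prefix is applied against $\{\emptyset\}$. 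Should this syntactic pushout fail to close uniformly, the fallback is a semantic argument treating $\diamond$ as an additional existential move by Eloise in the game-theoretic semantics and Skolemizing it against the surrounding quantifier prefix of $\Phi$, yielding an equivalent sentence of $\FO(\DD)$.
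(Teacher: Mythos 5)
The paper does not prove this proposition; it imports it from \cite{galliani2020safe}, so I can only judge your plan on its own terms, and it has a genuine gap exactly where the content of the theorem lies: the propositional connectives. Your quantifier commutations are correct (indeed $\exists v \diamond\phi \equiv \forall v \diamond\phi \equiv \diamond\exists v\phi$, all three expressing that some nonempty subteam of $X[M/v]$ satisfies $\phi$), and the elimination of an outermost $\diamond$ over $\{\emptyset\}$ is fine; note also that the case $\diamond\forall v\phi$ you flag as the main obstacle never arises when pushing $\diamond$ \emph{outward}, so you are worrying about the wrong spot. The real obstruction is conjunction, which you wave through as ``analogous but more delicate.'' No such equivalence exists at the formula level: every formula of the form $\diamond\chi$ defines an \emph{upward closed} property of teams (a nonempty $Y\subseteq X$ witnessing $\chi$ still witnesses it inside any $X'\supseteq X$), whereas $(\diamond\phi)\wedge\theta$ is not upward closed as soon as $\theta$ is, say, a literal or a dependence atom (take $\phi=\top$ and $\theta = Px$ or $\theta ={=\!\!(x;y)}$). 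Hence $(\diamond\phi)\wedge\theta$ is equivalent to no formula with $\diamond$ outermost, so $\diamond$ cannot be pushed past $\wedge$; and since $(\diamond\phi)\vee\theta\equiv(\diamond\phi)\wedge(\top\vee\theta)$, the disjunction case reduces to the same dead end. This is a semantic obstruction, not a missing clever rewriting, so the induction cannot close, and appealing to sentence-level equivalence does not help because the induction must pass through subformulas evaluated on nontrivial teams.

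The argument that actually works (and is essentially that of the cited source) is global rather than local: one first rewrites $\diamond\phi$ as $\top\vee(\mathrm{NE}\wedge\phi)$, where $\mathrm{NE}$ is the nonemptiness atom ($\M\models_X\mathrm{NE}$ iff $X\neq\emptyset$, the degenerate instance of the totality atom of Proposition \ref{propo:all_safe}), and then proves that this atom is safe for an arbitrary $\DD$ by translating the \emph{whole sentence} at once -- roughly, padding the team with fresh universally quantified variables so that the subteam selected by $\diamond$ can be simulated by a split in which the discarded assignments remain available to the surrounding conjuncts. Your fallback of treating $\diamond$ as an extra existential move and Skolemizing it gestures in this direction, but as stated it does not explain how the selected subteam is communicated to sibling conjuncts, which is precisely what the upward-closure obstruction shows a purely local translation cannot do.
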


Another connective studied in the context of Team Semantics is the \emph{contradictory negation} 
\begin{description}
\item[TS-$\sim$:] $\M \models_X \cneg \phi$ if and only if $\M \not \models_X \phi$. 
\end{description}
Differently from global disjunction and from the possibility operator, contradictory negation is highly unsafe. For example, augmenting Dependence Logic with contradictory negation yields \emph{Team Logic} $\FO(\sim, =\!\!(\cdot;\cdot))$ \cite{vaananen07b}, which is expressively equivalent to full Second Order Logic; and as observed in (\cite{galliani2016strongly}, Corollary 2), even $\FO(\sim, \const)$ is already expressively equivalent to full Second Order Logic. On the other hand, $\sim$ is still ``strongly first order'', in the sense that $\FO(\sim) \equiv \FO$: this is mentioned in \cite{galliani2016strongly} as a consequence of (\cite{galliani2016strongly}, Theorem 4), but it may be verified more simply by observing that if $\phi$ is first order then -- as a consequence of Proposition \ref{propo:flat} -- $\cneg \phi$ is logically equivalent to $\diamond (\lnot \phi)$ and then applying Proposition \ref{propo:diamond_safe}.

We end this section with two simple results that will be of some use:

\begin{proposition}
	Let $\DD$ be a strongly first order, relativizable collection of dependencies. Then every sentence of $\FO(\DD, \const)$ is equivalent to some sentence of $\FO$, and every sentence of $\FO(\DD^{(P)}, \const)$ is also equivalent\footnote{Here by $\DD^{(P)}$ we mean the collection of relativized dependencies $\{\D^{(P)}: \D \in \DD\}$.} to some sentence of $\FO$.\footnote{The non-relativized part of this result could also be proved as a consequence of Theorem \ref{thm:sfo_safe_dc}.}
	\label{propo:const_safe_sfo}
\end{proposition}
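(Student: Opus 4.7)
The plan is to derive both claims as immediate consequences of Theorem \ref{thm:sfo_safe_dc} (strongly first order dependencies are safe for downwards closed ones) together with the fact that $\const$ is itself a first order, downwards closed, strongly first order dependency.

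First I would verify that the family of constancy atoms $\{\const[\cdot]\}$ (one atom for each arity) is downwards closed: viewed as a generalized dependency, $\const$ holds of $(M,R)$ exactly when $|R|\le 1$, and this property is clearly preserved under taking subrelations. Moreover, $\const$ is first order (definable by $\forall\tuple x\tuple y\,(R\tuple x\wedge R\tuple y\to\tuple x=\tuple y)$) and strongly first order by Theorem \ref{thm:upwards_sfo}, which handles the family $\mathcal{UC}\cup\{\const\}$.

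For the non-relativized claim, since $\DD$ is strongly first order and $\{\const\}\subseteq\mathcal{DC}$, Theorem \ref{thm:sfo_safe_dc} gives $\FO(\DD,\const)\equiv\FO(\const)$; and since $\const$ is strongly first order, $\FO(\const)\equiv\FO$. Chaining these equivalences yields that every sentence of $\FO(\DD,\const)$ is equivalent to some first order sentence.

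For the relativized claim, I would first upgrade the relativized family $\DD^{(P)}$ to a strongly first order family (as a collection, not atom by atom). By the relativizability of $\DD$, every sentence of $\FO(\DD^{(P)})$ is equivalent to some sentence of $\FO(\DD)$; and since $\DD$ is strongly first order, every such sentence is in turn equivalent to an $\FO$ sentence. Hence $\FO(\DD^{(P)})\equiv\FO$, so $\DD^{(P)}$ is itself strongly first order as a collection. Applying Theorem \ref{thm:sfo_safe_dc} again with $\DD^{(P)}$ in place of $\DD$ (note that the theorem requires strong first orderness but not relativizability of the outer family), we obtain $\FO(\DD^{(P)},\const)\equiv\FO(\const)\equiv\FO$. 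The only subtle point — and the one I would state explicitly — is that although the relativized atoms $\D^{(P)}$ themselves need not be relativizable, what is required for the invocation of Theorem \ref{thm:sfo_safe_dc} is only that the collection $\DD^{(P)}$ be strongly first order, which follows directly from the definition of relativizability applied to $\DD$.
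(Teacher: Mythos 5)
Your non-relativized argument is correct and is exactly the alternative route the paper itself sanctions in a footnote to this proposition: $\{\const\}$ is a first order, downwards closed, strongly first order family, so Theorem \ref{thm:sfo_safe_dc} gives $\FO(\DD,\const)\equiv\FO(\const)\equiv\FO$. The paper instead proves both halves by a single direct argument -- replace each constancy atom $\const[\tuple v^i]$ by the literal $\tuple v^i=\tuple d^i$ for fresh constant symbols $\tuple d$, show by induction that satisfaction of $\phi$ amounts to the existence of an interpretation of $\tuple d$ making the rewritten sentence true, and then existentially quantify the constants away after converting the resulting $\FO(\DD)$ (or $\FO(\DD^{(P)})$) sentence to first order form. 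The payoff of the paper's approach is precisely that it treats the relativized and non-relativized cases uniformly.

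The relativized half of your proposal has a genuine gap. Theorem \ref{thm:sfo_safe_dc} is stated for \emph{families of generalized dependencies}, and a generalized dependency is by definition an isomorphism-closed class of structures $(M,R)$ over the signature $\{R\}$: its truth in a team depends only on the domain and on $X(\tuple x)$. The relativized atoms $\D^{(P)}$ are not objects of this kind -- their satisfaction condition $(P^{\M},X(\tuple x))\in\D$ depends on the interpretation of the unary predicate $P$ in the ambient model. So showing that $\FO(\DD^{(P)})\equiv\FO$ (which you do correctly) does not make $\DD^{(P)}$ a ``strongly first order family of dependencies'' in the sense required to invoke Theorem \ref{thm:sfo_safe_dc}; you would have to check that the proof of that theorem goes through for model-dependent atoms of this sort, which you do not do. The subtle point you flag (relativizability of the outer family not being needed) is not the problematic one; the problematic one is the type mismatch between $\DD^{(P)}$ and the hypotheses of the theorem you are applying. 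To close the gap you would either need to verify that the proof of Theorem \ref{thm:sfo_safe_dc} extends to relativized atoms, or argue directly as the paper does.
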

\begin{proof}[Sketch]
	Let $\phi$ be some formula of $\FO(\DD, \const)$ or $\FO(\DD^{(P)}, \const)$, and let $\tuple v$ list without repetition all the variables that appear in constancy atoms inside $\phi$. Let $\tuple d$ be a tuple of $|\tuple v|$ new, distinct constant symbols that do not appear in $\phi$, and let $\phi'(\tuple d)$ be the sentence of $\FO(\DD)$ (or $\FO(\DD^{(P)})$) obtained by replacing each constancy atom $\const[\tuple v^i]$ (where, by definition, $\tuple v^i \subseteq \tuple v$) appearing in $\phi$ with the first order literal $\tuple v^i = \tuple d^i$, where $\tuple d^i$ lists the constant symbols corresponding to the variables of $\tuple v^i$ in the same order. 
	
	By structural induction on $\phi$ it can be shown that, for all models $\M$ whose signature contains the signature of $\phi$ and for all teams $X$ over $\M$ whose domain contains the free variables of $\phi$, $\M \models_X \phi$ if and only if there exists a tuple $\tuple m$ of elements of $\M$ such that $(\M, \tuple d := \tuple m) \models_{X} \phi'(\tuple d)$, where $(\M, \tuple d := \tuple m)$ is the expansion of $\M$ obtained by interpreting the constants $d \in \tuple d$ with the corresponding elements $m \in \tuple m$.

	Then, in particular, if $\phi$ is a sentence of $\FO(\DD, \const)$ or $\FO(\DD^{(P)}, \const)$ then $\phi'$ will be a sentence of $\FO(\DD)$ or $\FO(\DD^{(P)})$, which by assumption will be equivalent to some first order sentence $\theta(\tuple d) \in \FO$. But then, if $\tuple w$ is a tuple of $|\tuple d|$ new, distinct variables we will have that $\M \models \phi$ if and only if $\M \models \exists \tuple w \theta(\tuple w/\tuple d)$, where $\theta(\tuple w/\tuple d)$ is the first order sentence obtained from $\theta$ by replacing each constant symbol $d \in \tuple d$ with the corresponding $w \in \tuple w$; and therefore $\phi$ is equivalent to a first order sentence too. 
\end{proof}

\begin{corollary}
	Every sentence of $\FO(\DD_0, \sqcup, \const, \All)$ is equivalent to some sentence of $\FO$. 
	\label{coro:to_FO}
\end{corollary}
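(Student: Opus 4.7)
The plan is to chain three of the results already established in this section: Proposition~\ref{propo:sqcupsafe_sfo} (global disjunction is safe for strongly first order families), Proposition~\ref{propo:all_safe} ($\All$ is safe for any family of dependencies) and Proposition~\ref{propo:const_safe_sfo} (the constancy atom is safe for any strongly first order and relativizable family). Concretely, I would first establish the auxiliary claim that $\DD_0 \cup \{\const, \All\}$ is a strongly first order family of dependencies; once that is in hand, the corollary follows immediately by applying Proposition~\ref{propo:sqcupsafe_sfo} with $\DD := \DD_0 \cup \{\const, \All\}$, since that proposition takes care of the extra global disjunction $\sqcup$.

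To prove the auxiliary claim, I would peel off $\All$ and $\const$ from the inside out. Given a sentence $\phi \in \FO(\DD_0, \const, \All)$, Proposition~\ref{propo:all_safe} (applied with $\DD := \DD_0 \cup \{\const\}$) produces an equivalent sentence $\phi' \in \FO(\DD_0, \const)$. Then Proposition~\ref{propo:const_safe_sfo}, applied with $\DD := \DD_0$, yields a first order sentence $\phi'' \in \FO$ equivalent to $\phi'$. For the latter step I need to check the two hypotheses of Proposition~\ref{propo:const_safe_sfo}: $\DD_0$ is strongly first order (noted in the preliminaries, citing \cite{galliani2016strongly}) and $\DD_0$ is relativizable. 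The second hypothesis is essentially trivial: each $[\psi] \in \DD_0$ corresponds to a first order sentence $\psi$ over the empty signature, and for a model $\M$ whose signature includes a unary predicate $P$ the relativized atom $[\psi]^{(P)}$ holds on a team $X$ iff $P^\M \models \psi$, which is expressed in plain $\FO$ (and therefore in $\FO(\DD_0)$) by the standard syntactic relativization $\psi^{P}$ of $\psi$ to $P$.

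Having shown that $\DD_0 \cup \{\const, \All\}$ is strongly first order, Proposition~\ref{propo:sqcupsafe_sfo} gives the corollary directly: every sentence of $\FO(\sqcup, \DD_0, \const, \All)$ is equivalent to a global disjunction of sentences of $\FO(\DD_0, \const, \All)$, each of these is equivalent to some first order sentence by the auxiliary claim, and at the sentence level the global disjunction of first order sentences collapses to the ordinary disjunction (since the team $\{\emptyset\}$ admits no nontrivial split), producing a single first order sentence equivalent to the original.

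I do not foresee a serious obstacle. The only step that is not a direct citation is the verification that $\DD_0$ is relativizable, and that reduces to the observation that $0$-ary first order dependencies look only at the domain and can be relativized to $P$ by ordinary first order relativization. Everything else is a mechanical composition of results from the preliminaries.
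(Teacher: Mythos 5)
Your proposal is correct and follows essentially the same route as the paper's own proof: peel off $\sqcup$ via Proposition~\ref{propo:sqcupsafe_sfo}, then $\All$ via Proposition~\ref{propo:all_safe}, then $\const$ via Proposition~\ref{propo:const_safe_sfo} applied to the strongly first order family $\DD_0$. Your extra verification that $\DD_0$ is relativizable (needed to invoke Proposition~\ref{propo:const_safe_sfo} exactly as stated) is a point the paper silently skips, and your argument for it is fine.
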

\begin{proof}
	By Proposition \ref{propo:sqcupsafe_sfo}, global disjunction is safe for any strongly first order collection of dependencies: therefore, if we can prove that $\FO(\DD_0, \const, \All) \equiv \FO$ we are done. By Proposition \ref{propo:all_safe}, totality is safe for any collection of dependencies, so $\FO(\DD_0, \const, \All) \equiv \FO(\DD_0, \const)$. Finally, since $\DD_0$ is a strongly first order collection of dependencies, by Proposition \ref{propo:const_safe_sfo} we have that $\FO(\DD_0, \const) \equiv \FO$. 
\end{proof}

\subsection{Relations Definable over the Empty Signature}
Finally, in this work we will need a couple of simple results regarding the relations that are definable via First Order Logic formulas over the empty signature:
\begin{definition}
	Let $\M$ be a first order model with domain $M$ and let $\theta(\tuple x, \tuple y)$ be a first order formula. Then we say that a $|\tuple x|$-ary relation $R$ over $\M$ is \emph{defined} by $\theta$ if there is a tuple $\tuple a \in M^{|\tuple y|}$ of elements such that $R = \{\tuple m \in M^{|\tuple x|} : \M \models \theta(\tuple m, \tuple a)\}$. 
\end{definition}
\begin{definition}
	A first order formula $\theta(\tuple x, \tuple y)$ is said to \emph{fix the identity type} of $\tuple y$ if, for every two variables $y_i, y_j \in \tuple y$, $\models \forall \tuple x \tuple y (\theta(\tuple x, \tuple y) \rightarrow y_i = y_j)$ or $\models \forall \tuple x \tuple y(\theta(\tuple x, \tuple y) \rightarrow y_i \not = y_j)$.
\end{definition}
\begin{proposition}
	If a relation $R$ over some model $\M$ is defined by a formula $\theta(\tuple x, \tuple y)$ then it is defined by some formula $\theta'(\tuple x, \tuple y)$ that fixes the identity type of $\tuple y$. 
	\label{propo:fixtype}
\end{proposition}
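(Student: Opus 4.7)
The plan is to modify $\theta$ by conjoining a quantifier-free formula that pins down the identity type of the witness tuple. The construction is short and the only real content is checking that both required properties survive.

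First I would fix a witness: since $R$ is defined by $\theta(\tuple x, \tuple y)$, there is some tuple $\tuple a \in M^{|\tuple y|}$ with $R = \{\tuple m : \M \models \theta(\tuple m, \tuple a)\}$. Let $E = \{(i,j) : a_i = a_j\}$ and $N = \{(i,j) : a_i \not= a_j\}$; these two sets partition the pairs of indices of $\tuple y$. Define the quantifier-free formula
\[
\sigma(\tuple y) \;=\; \bigwedge_{(i,j) \in E} y_i = y_j \;\wedge\; \bigwedge_{(i,j) \in N} y_i \not= y_j,
\]
and set $\theta'(\tuple x, \tuple y) := \theta(\tuple x, \tuple y) \wedge \sigma(\tuple y)$.

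Next I would verify the two required properties. For definability, observe that $\M \models \sigma(\tuple a)$ holds by construction of $\sigma$, so $\M \models \theta'(\tuple m, \tuple a)$ if and only if $\M \models \theta(\tuple m, \tuple a)$; hence $\theta'$ (with the same witness $\tuple a$) defines the same relation $R$. For fixing the identity type, take any $y_i, y_j \in \tuple y$: either $(i,j) \in E$, in which case the conjunct $y_i = y_j$ of $\sigma$ forces $\models \forall \tuple x \tuple y(\theta'(\tuple x, \tuple y) \rightarrow y_i = y_j)$, or $(i,j) \in N$, in which case the conjunct $y_i \not= y_j$ of $\sigma$ forces $\models \forall \tuple x \tuple y(\theta'(\tuple x, \tuple y) \rightarrow y_i \not= y_j)$.

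There is no genuine obstacle here: the construction is essentially the observation that any tuple has a definite identity type, and adding the description of that type as a conjunct is harmless for definability while making the identity type of $\tuple y$ explicit in the formula. The mild point worth flagging is that $E$ and $N$ partition all index pairs, so every pair $(i,j)$ is covered by exactly one clause of $\sigma$, which is what guarantees that $\theta'$ fixes the identity type rather than merely constrains it.
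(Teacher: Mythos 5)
Your proof is correct and follows exactly the same route as the paper's: fix a witness tuple $\tuple a$, conjoin to $\theta$ the equalities and inequalities describing the identity type of $\tuple a$, and observe that the same witness still defines $R$. The paper states this construction without spelling out the verification, which you do explicitly.
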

\begin{proof}
	By assumption, there exists some tuple $\tuple a$ of elements of $\M$ such that $R = \{\tuple m \in M^{|\tuple x|}: \M \models \theta(\tuple m, \tuple a)\}$. Then we can just let 
	\[
		\theta'(\tuple x, \tuple y) = \bigwedge_{a_i = a_j} (y_i = y_j) \wedge \bigwedge_{a_i \not = a_j} (y_i \not = y_j) \wedge \theta(\tuple x, \tuple y)
	\]
	where $a_i$ and $a_j$ range over $\tuple a$.
\end{proof}
\begin{proposition}
	If two nonempty relations $R$ and $S$ over some model $\M$ is defined by the same formula $\theta(\tuple x, \tuple y)$ over the empty signature and $\theta$ fixes the identity type of $\tuple y$ then there is a bijection $\mathfrak h: M \rightarrow M$ that maps $R$ into $S$. 
	\label{propo:autom}
\end{proposition}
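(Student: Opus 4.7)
The plan is to exploit the emptiness of the signature (so that every bijection of $M$ is automatically an automorphism) together with the identity-type-fixing hypothesis to align the parameter tuples of $\theta$ defining $R$ and $S$, and then extend this alignment to a bijection of $M$.

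First I would unpack the hypotheses. By the definition of definability, there exist tuples $\tuple a, \tuple b \in M^{|\tuple y|}$ with $R = \{\tuple m : \M \models \theta(\tuple m, \tuple a)\}$ and $S = \{\tuple m : \M \models \theta(\tuple m, \tuple b)\}$. Since $R$ and $S$ are nonempty, for each pair of indices $i, j$ the identity-type-fixing condition on $\theta$ forces $a_i = a_j \Leftrightarrow b_i = b_j$: indeed, if $\models \forall \tuple x \tuple y(\theta \to y_i = y_j)$ then any witness for the nonemptiness of $R$ (resp. $S$) forces $a_i = a_j$ (resp. $b_i = b_j$), and analogously in the $\neq$ case. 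So $\tuple a$ and $\tuple b$ have the same identity type.

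Next I would build the bijection. The map $a_i \mapsto b_i$ is well-defined and injective on the set $A = \{a_i : i \leq |\tuple y|\}$, with image $B = \{b_i : i \leq |\tuple y|\}$, because $\tuple a$ and $\tuple b$ have matching identity types. Since $A$ and $B$ are finite subsets of $M$ of equal cardinality, their complements $M \setminus A$ and $M \setminus B$ also have equal cardinality (trivially if $M$ is finite; both of cardinality $|M|$ if $M$ is infinite). I would pick any bijection $M \setminus A \to M \setminus B$ and combine it with $a_i \mapsto b_i$ to obtain a bijection $\mathfrak{h} : M \to M$ with $\mathfrak{h}(\tuple a) = \tuple b$.

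Finally I would verify that $\mathfrak h(R) = S$. Because $\theta$ is over the empty signature, every bijection of $M$ is an automorphism of the reduct of $\M$ to the empty signature, and truth of $\theta$ is preserved under such automorphisms. Hence $\M \models \theta(\tuple m, \tuple a)$ iff $\M \models \theta(\mathfrak h(\tuple m), \mathfrak h(\tuple a))$ iff $\M \models \theta(\mathfrak h(\tuple m), \tuple b)$, which gives $\tuple m \in R$ iff $\mathfrak h(\tuple m) \in S$, so $\mathfrak h$ maps $R$ to $S$ as required. The only mildly delicate step is the cardinality-matching for the extension of the partial bijection, which is handled uniformly by splitting into the finite and infinite cases.
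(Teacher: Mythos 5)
Your proof is correct and follows essentially the same route as the paper's: aligning the parameter tuples via the matching identity types (using nonemptiness), extending $a_i \mapsto b_i$ to a bijection of $M$ by a cardinality argument on the complements, and invoking the fact that every bijection is an automorphism over the empty signature. No gaps.
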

\begin{proof}
	By assumption, $R = \{\tuple m \in M^{|x|}: \M \models \theta(\tuple m, \tuple a)\}$ and $S = \{\tuple m \in M^{|x|}: \M \models \theta(\tuple m, \tuple b)\}$ for two tuples of elements $\tuple a = a_1 \ldots a_n$, $\tuple b = b_1 \ldots b_n$ with $|\tuple a| = |\tuple b| = |\tuple y| = n$. Since $\theta(\tuple x, \tuple y)$ fixes the identity type of $\tuple y$ and $R, S \not = \emptyset$, $a_i = a_j$ if and only if $b_i = b_j$ for all $i, j \in 1 \ldots n$. This implies that, as sets, $\tuple a$ and $\tuple b$ contain the same number of elements. Therefore, the sets $M \backslash \tuple a$ and $M \backslash \tuple b$ of the elements of $M$ that do not appear in $\tuple a$ and in $\tuple b$ respectively have the same cardinality and there exists some bijection $\mathfrak g: (M \backslash \tuple a) \rightarrow (M \backslash \tuple b)$. 
	
	Now define $\mathfrak h: M \rightarrow M$ as 
	\[
		\mathfrak h(m) = \left\{
			\begin{array}{l l}
				b_i & \mbox{ if } m = a_i \text{ for some } i \in 1 \ldots n;\\
				\mathfrak g(m) & \text{ otherwise.}
			\end{array}
			\right.
	\]
	This is a well-defined function: indeed, if $m = a_i$ and $m= a_j$ for two $i, j \in 1 \ldots n$ then $a_i = a_j$, and thus -- since $\theta(\tuple x, \tuple y)$ fixes the identity type of $\tuple y$ -- $b_i = b_j$ too. It is 1-1: if $\mathfrak h(m) = \mathfrak h(m')$ then it is necessarily the case that $m$ and $m'$ are both in $\tuple a$ or both not in it, since otherwise would be mapped in $\tuple b$ and the other in $M \backslash \tuple b$. If both are in $\tuple a$ then $m= a_i$, $\mathfrak h(m) = b_i$, $m' = a_j$ and $\mathfrak h(m') = b_j$ for some $i, j \in 1 \ldots |y|$, and $b_i = b_j$, and therefore $m = a_i = a_j = m'$; and if neither is in $\tuple a$ then $m = m'$ because $\mathfrak g$ is a bijection. It is surjective: any value $b_i$ occurring in $\tuple b$ is equal to $\mathfrak h(a_i)$ for the corresponding $a_i$, and since $\mathfrak g$ is a bijection any element in $M \backslash \tuple b$ is the result of applying $\mathfrak h$ to some element in $M \backslash \tuple a$.

	Since $\theta$ is a first order formula over the empty signature and every bijection $M \rightarrow M$ is an automorphism over the empty signature, $\M \models \theta(\tuple m, \tuple a)$ if and only if $\M \models \theta(\mathfrak h(\tuple m), \mathfrak h(\tuple a))$. But $\mathfrak h(\tuple a) = \tuple b$, and so
	\begin{align*}
		\mathfrak h[R] &= \{\mathfrak h(\tuple m) : \M \models \theta(\tuple m, \tuple a)\} = \{\mathfrak h(\tuple m) : \M \models \theta(\mathfrak h(\tuple m), \tuple b)\}\\
		& = \{\tuple m' : \M \models \theta(\tuple m', \tuple b)\} = S
	\end{align*}
	where in the third equality we used the fact that $\mathfrak h$ is surjective.
\end{proof}
\section{Limited Contradictions and Doubly Strongly First Order Dependencies}
Among the additional connectives studied in the context of Team Semantics, the contradictory negation $\cneg \phi$ is both one of the earliest considered and one of the most natural to investigate. However, as we mentioned in Section \ref{subsec:sfo}, adding it to Dependence Logic or even to ``Constancy Logic'' $\FO(\const)$ -- the constancy atom being arguably the simplest kind of non-trivial generalized dependency -- results in a logic as expressive as full Second Order Logic.

This extreme unsafety of the contradictory negation in Team Semantics ultimately derives from the interaction between it and disjunctions and existential quantifiers: indeed, as the rules \textbf{TS-}$\vee$ and \textbf{TS-}$\exists$ show, Team Semantics implicitly performs second-order existential quantifications when evaluating these connectives, and therefore the unrestricted combination of them with contradictory negations can lead to unbounded nestings of existential and universal second-order quantifications. 

Therefore, one may want to investigate the properties of fragments of $\FO(\sim)$ (and extensions thereof by means of dependency atoms) in which such nestings are disallowed or limited, for example by allowing the contradictory negation to appear only before literals and dependency atoms. 
\begin{definition}[Limited Contradiction]
	$\FO(\sim_0)$ is the fragment of $\FO(\sim)$ in which the contradictory negation $\sim$ may appear only in front of first order literals. Likewise, if $\DD$ is any family of dependency atoms, $\FO(\sim_0, \DD)$ is the fragment of $\FO(\sim, \DD)$ in which the contradictory negation $\sim$ may only appear in front of first order literals or dependency atoms. 
\end{definition}
\begin{remark}
	One could wonder at this point if our choice, which we discussed in Remark \ref{rem:noterms}, to forbid complex terms inside generalized dependency atoms does not affect the above definition: even if $=\!\!(f(x); f(f(x)))$ can be reduced to $\exists z w (z = f(x) \wedge w = f(f(x)) \wedge =\!\!(z; w))$, translating $\sim =\!\!(f(x); f(f(x)))$ as $\cneg \exists z w (z=f(x) \wedge w = f(f(x)) \wedge =\!\!(z;w))$ would bring us outside $\FO(\sim_0, =\!\!(\cdot; \cdot))$. This is however not a problem, because that expression is also equivalent to $\exists z w (z = f(x) \wedge w=f(f(x)) \wedge \sim =\!\!(z;w))$: in general, contradictory negation does not commute with existential quantifiers, but it does so when the values of the quantified variables are ``forced'', as they are in this case. 
\end{remark}
Since $\FO(\sim) \equiv \FO$, we have that $\FO(\sim_0) \equiv \FO$ as well. Therefore, it is possible to ask which dependencies $\D$ or families of dependencies $\DD$ are safe for $\FO(\sim_0)$, in the sense that $\FO(\sim_0, \D) \equiv \FO(\sim_0) \equiv \FO$ (respectively $\FO(\sim_0, \DD) \equiv \FO(\sim_0) \equiv \FO$). 

As we will now see, this problem may be reduced to the problem of whether certain families of dependencies are strongly first order for $\FO$. Given a dependency atom $\D$, one can define the complementary atom $\cneg \D$: 
\begin{definition}[Complement of a Dependency Atom]
	Let $\D$ be any generalized dependency atom. Then we write $\cneg \D$ for the generalized dependency atom defined as 
	\[
		\cneg \D = \{(M, R) : (M, R) \not \in \D\}.
	\]
	If $\DD$ is a family of dependencies, we write $\cneg \DD$ for the family $\{\cneg \D : \D \in \DD\}$.
\end{definition}
For all models $\M$, teams $X$ over $\M$, and tuples of variables $\tuple v$ in the domain of $X$, $\M \models_X (\cneg \D) \tuple v$ if and only if $\M \not \models \D \tuple v$: thus, $(\cneg \D) \tuple v \equiv \cneg (\D \tuple v)$ and there is no difference between applying the complement $(\cneg \D)$ of the dependency $\D$ to the tuple of variables $\tuple v$ and applying the contradictory negation operator $\sim$ to the dependency atom $\D \tuple v$. 

\begin{theorem}
	Let $\DD$ be any family of generalized dependencies. Then $\FO(\sim_0, \DD) \equiv \FO$ if and only if $\FO(\DD, \cneg \DD) \equiv \FO$. 
	\label{thm:2sfo-sim0}
\end{theorem}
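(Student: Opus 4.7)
The plan is to reduce both directions to syntactic rewritings using two ingredients: (i) the identity $\sim \D \tuple v \equiv (\cneg \D) \tuple v$, valid for every generalized dependency atom $\D$, which is noted in the paragraph immediately preceding the theorem statement and which follows directly from the definitions of $\cneg \D$ and of the rule TS-$\sim$; and (ii) the safety of the possibility operator $\diamond$ established in Proposition \ref{propo:diamond_safe}, which lets us absorb the sole remaining use of $\sim$ (in front of first order literals) into a connective that is already known to be safe for arbitrary families of dependencies.

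The easier direction is from $\FO(\sim_0, \DD) \equiv \FO$ to $\FO(\DD, \cneg \DD) \equiv \FO$. Given any sentence $\phi \in \FO(\DD, \cneg \DD)$, I would rewrite each atom $(\cneg \D) \tuple v$ occurring in $\phi$ as $\sim \D \tuple v$; by (i) the resulting sentence $\phi^*$ is equivalent to $\phi$, and since $\sim$ in $\phi^*$ appears only in front of dependency atoms we have $\phi^* \in \FO(\sim_0, \DD)$, so by the assumption $\phi^*$ (and hence $\phi$) is equivalent to some first order sentence.

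For the reverse direction, assume $\FO(\DD, \cneg \DD) \equiv \FO$ and let $\phi \in \FO(\sim_0, \DD)$. I would first rewrite each subformula $\sim \D \tuple v$ as $(\cneg \D) \tuple v$ using (i). Each remaining occurrence of $\sim$ in the resulting formula stands in front of a first order literal $\ell$; for these I would apply the equivalence $\sim \ell \equiv \diamond(\lnot \ell)$, which follows from flatness (Proposition \ref{propo:flat}) since $\M \models_X \sim \ell$ holds iff some $s \in X$ satisfies $\lnot \ell$ in Tarskian semantics, iff the nonempty singleton $\{s\} \subseteq X$ satisfies $\lnot \ell$ in team semantics, iff $\M \models_X \diamond(\lnot \ell)$. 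The resulting sentence $\phi'$ lies in $\FO(\diamond, \DD, \cneg \DD)$ and is equivalent to $\phi$; Proposition \ref{propo:diamond_safe} applied to the family $\DD \cup \cneg \DD$ then yields an equivalent sentence of $\FO(\DD, \cneg \DD)$, which by hypothesis is equivalent to some sentence of $\FO$.

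I do not anticipate a serious obstacle: both directions amount to a syntactic translation followed by an invocation of the already-established safety of $\diamond$, and the only substantive verification is the literal-level identity $\sim \ell \equiv \diamond(\lnot \ell)$, which rests on the flatness of first order logic under team semantics.
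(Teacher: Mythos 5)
Your proof is correct and follows essentially the same route as the paper's: both directions are syntactic rewritings using $(\cneg\D)\tuple v \equiv\; \cneg(\D\tuple v)$, with the residual negated literals absorbed via $\cneg\ell \equiv \diamond(\lnot\ell)$ and Proposition \ref{propo:diamond_safe}. The only difference is that you spell out the flatness-based verification of the literal-level identity, which the paper leaves implicit.
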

\begin{proof}
	Suppose that $\FO(\DD, \cneg \DD) \equiv \FO$, and let $\phi$ be a sentence of $\FO(\sim_0, \DD)$. By definition, the contradictory negation $\sim$ may appear in $\phi$ only in front of first order literals or of dependency atoms $\D \tuple v$ for $\D \in \DD$. Now, if $\alpha$ is a first order literal, $\cneg \alpha$ is equivalent to $\diamond \lnot \alpha$: therefore, $\phi$ is equivalent to some sentence of $\FO(\diamond, \DD, \cneg \DD)$. But by Proposition \ref{propo:diamond_safe} the possibility operator $\diamond$ is safe for all collections of dependencies, so $\phi$ is equivalent to some sentence of $\FO(\DD, \cneg \DD)$. Now by assumption $\FO(\DD, \cneg \DD) \equiv \FO$, and so $\phi$ is equivalent to some first order sentence.

	On the other hand, since $(\cneg \D) \tuple v \equiv \cneg (\D \tuple v)$ every sentence of $\FO(\DD, \cneg \DD)$ is equivalent to some sentence of $\FO(\sim_0, \DD)$, and so if $\FO(\DD, \cneg \DD)$ is more expressive than $\FO$ so is $\FO(\sim_0, \DD)$. 
\end{proof}

This justifies the interest in the following definition: 
\begin{definition}[Doubly Strongly First Order Dependencies]
	Let $\D$ be a generalized dependency. Then $\D$ is \emph{doubly strongly first order} if and only if $\{\D, \cneg \D\}$ is strongly first order. Likewise, a family $\DD$ of dependencies is doubly strongly first order if and only if $\DD ~\cup \cneg \DD$ is strongly first order.
\end{definition}

What can we say about doubly strongly first order dependencies, aside from the fact that, by Theorem \ref{thm:2sfo-sim0}, $\D$ is doubly strongly first order if and only if it is safe for $\FO(\sim_0)$? 

A first, easy observation is that the constancy atom $\const$ is doubly strongly first order. Indeed, its negation $\sim \const$ is the ``non-constancy'' operator already studied in \cite{galliani2015upwards}, for which we have that $\M \models_X \sim \const[\tuple x] \Leftrightarrow |X(\tuple x)| \geq 2$. This is a first order, upwards closed dependency, and therefore by Theorem \ref{thm:upwards_sfo} $\{\const, \sim \const\} \subseteq \mathcal{UC} \cup \{\const\}$ is strongly first order. Since $\cneg(\cneg \D)  = \D$ for all dependencies $\D$, it follows at once that $\sim \const$ is also doubly strongly first order: more in general, whenever $\DD$ is doubly strongly first order so is $\cneg \DD$.

A similar argument holds for all downwards closed strongly first order dependencies: 
\begin{proposition}
Let $\DD$ be a strongly first order family of downwards closed generalized dependencies. Then $\DD$ is doubly strongly first order. 
\end{proposition}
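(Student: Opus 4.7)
The plan is to reduce the statement to known safety results by analyzing the complements $\cneg \D$. The first step is the observation that if $\D$ is downwards closed, then $\cneg \D$ is upwards closed: if $R \not\in \D$ and $R \subseteq R'$, then $R' \not\in \D$ as well, for otherwise downward closure would give $R \in \D$. Combined with the fact that every strongly first order dependency is first order (by the proposition proved earlier in the paper), it follows that for every $\D \in \DD$ the complement $\cneg \D$ lies in $\mathcal{UC}$, and hence $\cneg \DD \subseteq \mathcal{UC}$.

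Next, I would invoke Theorem \ref{thm:upwards_sfo}, which states that $\mathcal{UC} \cup \{\const\}$ is strongly first order. Since $\cneg \DD \subseteq \mathcal{UC} \cup \{\const\}$, every sentence of $\FO(\cneg \DD)$ is equivalent to some first order sentence, so $\cneg \DD$ is itself a strongly first order family of dependencies.

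The final step is to combine $\DD$ and $\cneg \DD$ by applying Theorem \ref{thm:sfo_safe_dc}. We have $\DD \subseteq \mathcal{DC}$ by hypothesis, and $\cneg \DD$ is strongly first order by the previous paragraph. That theorem then gives $\FO(\cneg \DD, \DD) \equiv \FO(\DD)$. Since $\DD$ is strongly first order by hypothesis, $\FO(\DD) \equiv \FO$, and chaining the two equivalences yields $\FO(\DD, \cneg \DD) \equiv \FO$, which is exactly the statement that $\DD$ is doubly strongly first order.

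I do not expect any genuine obstacle: the whole proof is a routine bookkeeping exercise assembling Theorem \ref{thm:upwards_sfo} and Theorem \ref{thm:sfo_safe_dc}. The only conceptual point that has to be stated explicitly is the duality between downward and upward closure under complementation, which is what lets the safety theorem for downwards closed dependencies apply in the reverse direction needed here.
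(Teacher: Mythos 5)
Your proof is correct and follows exactly the paper's own argument: complement downwards closed to get upwards closed, apply Theorem \ref{thm:upwards_sfo} to conclude $\cneg\DD$ is strongly first order, then apply Theorem \ref{thm:sfo_safe_dc} to get $\FO(\cneg\DD,\DD)\equiv\FO(\DD)\equiv\FO$. Your explicit check that $\cneg\D$ is first order (so that it genuinely lies in $\mathcal{UC}$) is a detail the paper leaves implicit, but otherwise the two proofs coincide.
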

\begin{proof}
    Since every $\D \in \DD$ is downwards closed, every $\cneg \D \in \cneg \DD$ is upwards closed: indeed, if $(M, R) \in (\cneg \D)$ then $(M, R) \not \in \D$, and since $\D$ is downwards closed $(M, R') \not \in \D$ for all $R' \supseteq R$, and therefore $(M, R') \in (\cneg \D)$ for all these $R'$. 
    
	Therefore, by Theorem \ref{thm:upwards_sfo}, $\cneg \DD$ is a strongly first order family of dependencies. But by Theorem \ref{thm:sfo_safe_dc}, strongly first order families of dependencies are safe for families of downwards closed dependencies: therefore, $\cneg \DD$ is safe for $\DD$ and 
    \[
        \FO(\cneg \DD, \DD) \equiv \FO(\DD) \equiv \FO.  
    \]
\end{proof}
\begin{remark}
A dependency that is definable in terms of doubly strongly first order dependencies is not necessarily doubly strongly first order itself. Indeed, as we saw, both the constancy atom $\const$ and its negation $\sim \const$ are doubly strongly first order. However, using them both we can define the dependency  
\[
	\not = \!\! (\tuple x; \tuple y) := (\const[\tuple x] \wedge \sim \const[\tuple y]) \vee \top
\]
	that is true in a team $X$ if and only if $X$ contains at least two assignments that agree over $\tuple x$ but not over $\tuple y$. This dependency is strongly first order\footnote{If a dependency is definable in terms of a strongly first order family of dependencies then it is necessarily strongly first order. This is easy to verify, because if $\D$ is definable in terms of $\DD$ then every sentence of $\FO(\D)$ is equivalent to some sentence of $\FO(\DD)$. But as the remark shows, doubly strong first-orderness is different from strong first-orderness in this respect.}, but it is not doubly strongly first order: indeed, its contradictory negation is simply the functional dependency atom $=\!\!(\tuple x, \tuple y)$, which is not strongly first order as Dependence Logic $\FO(=\!\!(\cdot; \cdot))$ is more expressive than $\FO$.
\end{remark}

\section{Characterizing Relativizable, Doubly Strongly First Order Dependencies}
This section contains the main result of this work, that is, a complete characterization of relativizable doubly strongly first order dependencies. As we will see, this characterization will imply that a relativizable dependency $\D$ is doubly strongly first order if and only if both $\D$ and $\cneg \D$ are \emph{separately} strongly first order, and that a family $\DD$ of relativizable dependencies is doubly strongly first order if and only if every $\D \in \DD$ is doubly strongly first order. 

In order to do that, we will first need to generalize some of the results of \cite{galliani2019characterizing} to dependencies that are not necessarily downwards closed.
\begin{definition}[Downwards Closure]
Let $\D$ be any generalized dependency. Then the \emph{downwards closure} $\D^{\downarrow}$ of $\D$ is the dependency defined as 
\[
	\D^{\downarrow} = \{(M, R): \exists R' \supseteq R \text{ s.t. } (M, R') \in \D\}.
\]
\end{definition}
\begin{lemma}
	For all dependencies $\D$, $\D^\downarrow$ is downwards closed and $\D \subseteq \D^\downarrow$. In fact, $\D^\downarrow$ is the smallest downwards closed dependency that contains $\D$, in the sense that if $\E$ is a downwards closed dependency and $\D \subseteq \E$ then $\D^\downarrow \subseteq \E$.
	\label{lem:Ddown}
\end{lemma}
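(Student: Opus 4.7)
The plan is to verify the three claims in sequence, each directly from the definitions.

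First I would show $\D \subseteq \D^\downarrow$ by the trivial witness: if $(M, R) \in \D$, then $R \supseteq R$ and $(M, R) \in \D$, so taking $R' = R$ in the definition of $\D^\downarrow$ shows that $(M, R) \in \D^\downarrow$.

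Next I would check downward closure of $\D^\downarrow$. Suppose $(M, R) \in \D^\downarrow$ and $R'' \subseteq R$; I need to produce some $R^\star \supseteq R''$ with $(M, R^\star) \in \D$. By assumption there is $R' \supseteq R$ with $(M, R') \in \D$, and then $R' \supseteq R \supseteq R''$, so taking $R^\star = R'$ witnesses $(M, R'') \in \D^\downarrow$.

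Finally I would prove minimality. Let $\E$ be downwards closed with $\D \subseteq \E$, and take any $(M, R) \in \D^\downarrow$. By definition there is $R' \supseteq R$ with $(M, R') \in \D \subseteq \E$; since $\E$ is downwards closed and $R \subseteq R'$, it follows that $(M, R) \in \E$. Hence $\D^\downarrow \subseteq \E$.

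None of the three steps presents a real obstacle: the lemma is essentially the statement that the collection of downward-closed dependencies containing $\D$ has a least element, and the definition of $\D^\downarrow$ is precisely engineered to supply this least element. The only subtlety worth flagging explicitly is that the ``supersets'' $R'$ appearing in the definition are taken in $M^k$ (where $k$ is the arity of $\D$), so the downward-closure step really does chain $R'' \subseteq R \subseteq R'$ inside a single ambient power set, and no issue of differing arities or differing domains $M$ arises.
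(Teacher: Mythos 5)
Your proof is correct and follows essentially the same route as the paper's: containment via the trivial witness $R' = R$, downward closure by chaining $R'' \subseteq R \subseteq R'$, and minimality by passing the witness $R'$ into $\E$ and using downward closure of $\E$. Nothing to add.
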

\begin{proof}
	If $(M, R) \in \D^\downarrow$, there exists some $R' \supseteq R$ such that $(M, R') \in \D$. Then if $S \subseteq R$ we also have that $S \subseteq R'$, and so $(M, S) \in \D^\downarrow$. Thus, $\D^\downarrow$ is downwards closed. If $(M, R) \in \D$, then trivially for $R' = R$ we have that $(M, R') \in \D$ and $R' \supseteq R$, so $(M, R) \in \D^\downarrow$: therefore, $\D \subseteq \D^\downarrow$. Finally, suppose that $\E$ is another downwards closed dependency such that $\D \subseteq \E$, and let $(M, R) \in \D^\downarrow$: then there exists some $R' \supseteq R$ such that $(M, R') \in \D$, and therefore $(M, R') \in \E$, and therefore -- since $\E$ is downwards closed and $R \subseteq R'$ -- $(M, R) \in \E$. Thus $\D^\downarrow \subseteq \E$. 
\end{proof}
\begin{lemma}
Let $\D$ be any strongly first order dependency. Then $\D^{\downarrow}$ is also strongly first order; and if $\D$ is relativizable then so is $\D^\downarrow$. 
\label{lem:dcexists}
\end{lemma}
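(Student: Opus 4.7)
The plan is to translate $\D^\downarrow \tuple x$ into an equivalent $\FO(\D)$-formula; strong first-orderness of $\D$ then reduces every sentence of $\FO(\D^\downarrow)$ to a first-order sentence. With two fresh tuples $\tuple z, \tuple y$ of length $|\tuple x|$, I would take
\[
\phi(\tuple x) \;:=\; \forall \tuple z \, \exists \tuple y \Bigl(\bigl(\tuple y = \tuple x \wedge \tuple z \neq \tuple x\bigr) \,\vee\, \bigl(\tuple y = \tuple z \wedge \D \tuple y\bigr)\Bigr),
\]
where $\tuple z \neq \tuple x$ abbreviates $\bigvee_i z_i \neq x_i$. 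Intuitively, $\forall \tuple z$ enumerates all $k$-tuples of the domain; the disjunction then splits the expanded team so that tuples $\tuple z$ lying in a chosen witnessing relation $R \supseteq X(\tuple x)$ with $(M, R) \in \D$ are routed to the right disjunct (making $Y_2(\tuple y) = R$), while tuples $\tuple z \notin R$ are routed to the left and harmlessly set to $\tuple y = \tuple x$. The clause $\tuple z \neq \tuple x$ on the left is the crucial device: it forces each assignment whose $\tuple z$-value is $s(\tuple x)$ (for $s \in X$) to land on the right, thereby ensuring $s(\tuple x) \in R$.

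I would then verify this characterisation on every non-empty team $X$. The forward direction is explicit: given $R \supseteq X(\tuple x)$ with $(M, R) \in \D$, set $H(s[\tuple a/\tuple z]) = \{\tuple a\}$ when $\tuple a \in R$ and $H(s[\tuple a/\tuple z]) = \{s(\tuple x)\}$ otherwise, and split the expanded team accordingly. The backward direction is the key step: for any witnessing $H$ and split, put $R := Y_2(\tuple y) \in \D$ and observe that for each $s \in X$ the assignment obtained by setting $\tuple z := s(\tuple x)$ cannot satisfy $\tuple z \neq \tuple x$, so it must lie in $Y_2$, which forces its $\tuple y$-value to equal $s(\tuple x)$ and hence $s(\tuple x) \in R$. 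Substituting $\phi(\tuple x)$ for each $\D^\downarrow \tuple x$ in any sentence of $\FO(\D^\downarrow)$ then yields an equivalent $\FO(\D)$-sentence, and strong first-orderness of $\D$ finishes the job.

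The main obstacle I expect is the corner case $X = \emptyset$: then $\phi(\tuple x)$ evaluates to $(M, \emptyset) \in \D$, while $\D^\downarrow \tuple x$ merely requires $\D$ to be non-empty on $\M$, and the two coincide only when $\D$ has the empty team property. To patch this I would extend $\phi$ with a disjunct active only on empty teams: since $\D$ is strongly first-order, a $\FO$-sentence equivalent to ``$\D$ is non-empty on $\M$'' is available (combining the $\FO$ translation of $\exists \tuple y \, \D \tuple y$ with the explicit $\FO$ sentence obtained from $\D(R)$ by setting $R$ to the empty relation), and the $\bot$ literal singles out the empty team; a standard combination of these ingredients restores the intended semantics without affecting behaviour on non-empty teams.

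For the relativizability claim, I would repeat the construction with $P$-guards, using
\[
\phi^{(P)}(\tuple x) \;:=\; \forall \tuple z \Bigl(\bigvee_i \lnot P z_i \,\vee\, \exists \tuple y \Bigl(\bigwedge_i P y_i \wedge \bigl((\tuple y = \tuple x \wedge \tuple z \neq \tuple x) \vee (\tuple y = \tuple z \wedge \D^{(P)} \tuple y)\bigr)\Bigr)\Bigr).
\]
The same analysis shows $\phi^{(P)}(\tuple x) \equiv (\D^\downarrow)^{(P)} \tuple x$, so every sentence of $\FO((\D^\downarrow)^{(P)})$ becomes one of $\FO(\D^{(P)})$, which by relativizability of $\D$ is a sentence of $\FO(\D)$, by strong first-orderness a sentence of $\FO$, and by the first part of the lemma a sentence of $\FO(\D^\downarrow)$, as required.
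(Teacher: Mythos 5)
Your core construction is essentially the paper's: the paper uses $\chi(\tuple x) = \forall p q \exists \tuple w((p \neq q \vee \tuple x = \tuple w) \wedge \D\tuple w)$, which differs from your $\phi(\tuple x)$ only in how the ``escape'' disjunct is keyed (two fresh variables with $p=q$ versus $\tuple z = \tuple x$), and your verification on nonempty teams is sound. Two points, however, need repair.

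First, the empty-team patch as you describe it cannot be done at the atom level. By Proposition~\ref{propo:flat} every first order sentence is \emph{vacuously} satisfied by the empty team, so a disjunct of the form $\bot \wedge \chi_0$ (with $\chi_0$ the first order sentence expressing that $\D$ is nonempty on $M$) is just equivalent to $\bot$, and $(\bot \wedge \chi_0) \sqcup \phi(\tuple x)$ is true on the empty team even in models where $\D$ is empty and hence $(M,\emptyset) \notin \D^\downarrow$. The test $\chi_0$ has to be hoisted to the sentence level, as the paper does: replace each atom by $\bot \sqcup \phi(\tuple x)$ and conjoin $\chi_0$ to the whole translated sentence (noting separately that in models where $\chi_0$ fails, any sentence containing a $\D^\downarrow$ atom is false). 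Also, since the replacement introduces $\sqcup$, the translated sentence lives in $\FO(\sqcup,\D)$ and you need Proposition~\ref{propo:sqcupsafe_sfo} to get back to $\FO$.

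Second, your relativized formula $\phi^{(P)}$ is not equivalent to $(\D^\downarrow)^{(P)}\tuple x$: it is missing the guard $\bigwedge_i P x_i$. If some $s \in X$ has $s(\tuple x) \notin (P^{\M})^{k}$, then $(\D^\downarrow)^{(P)}\tuple x$ is false (compare the paper's definition of $\All^{(P)}$, which includes exactly this guard), but in your formula the critical assignment with $\tuple z := s(\tuple x)$ escapes through the outer disjunct $\bigvee_i \lnot P z_i$, so $s(\tuple x)$ is never forced into the relation fed to $\D^{(P)}$ and the formula can come out true. The paper's $\chi'$ avoids this automatically because \emph{all} of $X(\tuple x)$ is channelled into the single $\D^{(P)}\tuple w$ atom; in your version the guard must be added explicitly. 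Both fixes are routine, but as written the proof has these gaps.
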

\begin{proof}
Over nonempty teams, $\D^\downarrow \tuple x$ is equivalent to the formula
\[
	\chi(\tuple x) = \forall p q \exists \tuple w ( (p \not = q \vee \tuple x = \tuple w) \wedge \D \tuple w) 
\]
	and that $(\D^{\downarrow})^{(P)} \tuple x$ is equivalent to 
\[
	\chi'(\tuple x) = \forall p q \exists \tuple w ( (p \not = q \vee \tuple x = \tuple w) \wedge \D^{(P)} \tuple w).
\]

	Indeed, if $\M \models_X \D^\downarrow \tuple x$ for $X$ nonempty then there exists some $R' \supseteq X(\tuple x)$ such that $(M, R') \in \D$. Then we can show that $\M \models_X \chi$ by choosing all possible values for $p$ and $q$ (remember that we assume that all first order models have at least two elements) and then letting $\tuple w$ take the same value as $\tuple x$ when $p = q$ and all the values in $R'$ when $p \not = q$. Then in the resulting team $Y$ we will have that $Y(\tuple w) = R'$, and so $\M \models_Y \D \tuple w$ as required. Conversely, if $\M \models_X \chi$ then there exists some choice function $H: X \rightarrow \mathcal P(M^k)$ such that, for $Y = X[M/pq][H/\tuple w]$, $\M \models_Y (p \not = q \vee \tuple x = \tuple w) \wedge \D \tuple w$. Now since $\M \models_Y p \not=q \vee \tuple x = \tuple w$ and $p$ and $q$ take all possible values for all possible values of $\tuple x$, we have that $Y(\tuple w) \supseteq Y(\tuple x) = X(\tuple x)$; and since $\M \models_Y \D \tuple w$, $(M, Y(\tuple w)) \in \D$. Therefore there exists some $R' = Y(\tuple w) \supseteq X(\tuple x)$ such that $(M, R') \in \D$, and hence $(M, X(\tuple x)) \in \D^\downarrow$. A completely similar argument applies to $\chi'$ and $(\D^\downarrow)^{(P)}$. 

	Some care, however, is required if the team $X$ is empty: in that case, $X[M/pq] = \emptyset$ as well, and so the technique used above to ``add'' tuples to $X(\tuple x)$ when $p \not = q$ is not viable. However, we have that $(M, \emptyset) \in \D^\downarrow$ if and only if there exists \emph{any} $R$ such that $(M, R) \in \D$, since the empty set is contained in all relations $R$, and it is easy to check that the $\FO(\D)$ sentence $\exists \tuple x \D \tuple x$ will be true in a model $\M$ if and only if $(M, R) \in \D$ for some $R$. Since $\D$ is strongly first order, this sentence will be equivalent to some first order sentence $\chi_0$. Observe that in all models $\M$ in which $\chi_0$ is true, $\D^\downarrow \tuple z$ will be equivalent to $\bot \sqcup \chi(\tuple z)$;\footnote{Since $\bot$ is false for all assignments, by Rule \textbf{TS-lit} $\M \models_X \bot$ if and only if $X = \emptyset$. Some renaming may be necessary if the tuple $\tuple z$ contains the variables $p$ and $q$.} and in all models $\M$ in which $\chi_0$ is false, $\D^\downarrow \tuple z$ will be false in all teams (including the empty team) and so every $\FO(\D^\downarrow)$ sentence in which at least one $\D^\downarrow$ atom appears will be false. 
	
	Then every sentence $\phi$ of $\FO(\D^\downarrow)$ in which $\D^\downarrow$ occurs at least once will be equivalent to the $\FO(\sqcup, \D)$ sentence $\chi_0 \wedge \phi'$, where $\phi' \in \FO(\sqcup, \D)$ is the result of replacing every such occurrence $\D^\downarrow \tuple z$ of $\D^\downarrow$ in $\phi$ with $\bot \sqcup \chi(\tuple z)$. Since $\D$ is strongly first order, this is equivalent to some first order sentence by Proposition \ref{propo:sqcupsafe_sfo}. Every sentence of $\FO(\D^\downarrow)$ in which $\D^\downarrow$ does not occur, on the other hand, is already a first order sentence, so in conclusion every sentence of $\FO(\D^\downarrow)$ is equivalent to some first order sentence and $\D^\downarrow$ is strongly first order. 

	The argument to show that $\D^\downarrow$ is relativizable is analogous. 
\end{proof}

\begin{lemma}
	Let $\D$ be any generalized dependency. Then $(\D^\downarrow)_{\max} = \D_{\max}$.
\label{lem:dcmax}
\end{lemma}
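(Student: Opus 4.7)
The plan is to prove the set equality $(\D^\downarrow)_{\max} = \D_{\max}$ by establishing both containments directly from the definitions of $\D^\downarrow$ and of the $(\cdot)_{\max}$ operator. No induction or auxiliary machinery is needed; the argument is essentially a maximality chase, and the only real content is the observation that enlarging $\D$ to $\D^\downarrow$ cannot change which $R$'s are maximal, since maximality kills off exactly the ``witness'' relations that $\D^\downarrow$ adds.

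For the inclusion $\D_{\max} \subseteq (\D^\downarrow)_{\max}$: I take $(M,R) \in \D_{\max}$. Then $(M,R) \in \D \subseteq \D^\downarrow$ by Lemma \ref{lem:Ddown}. For maximality inside $\D^\downarrow$, suppose $R' \supsetneq R$. If $(M,R') \in \D^\downarrow$, then by definition of $\D^\downarrow$ there exists some $R'' \supseteq R'$ with $(M,R'') \in \D$; but then $R'' \supseteq R' \supsetneq R$, contradicting $(M,R) \in \D_{\max}$. So $(M,R') \notin \D^\downarrow$ for every $R' \supsetneq R$, hence $(M,R) \in (\D^\downarrow)_{\max}$.

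For the inclusion $(\D^\downarrow)_{\max} \subseteq \D_{\max}$: I take $(M,R) \in (\D^\downarrow)_{\max}$. Since $(M,R) \in \D^\downarrow$, there is some $R' \supseteq R$ with $(M,R') \in \D \subseteq \D^\downarrow$. If $R' \supsetneq R$, this contradicts the maximality of $R$ inside $\D^\downarrow$; hence $R' = R$, giving $(M,R) \in \D$. To see that $(M,R)$ is maximal in $\D$, suppose toward contradiction that there is some $R'' \supsetneq R$ with $(M,R'') \in \D$; then $(M,R'') \in \D^\downarrow$ again contradicts the maximality of $R$ in $\D^\downarrow$. So $(M,R) \in \D_{\max}$.

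There is no genuine obstacle here — both directions are purely set-theoretic manipulations that exploit the fact that $R$ witnesses its own membership in $\D^\downarrow$ precisely when $R \in \D$, while any strict extension of $R$ that witnesses membership in $\D^\downarrow$ automatically produces a strict extension of $R$ sitting in $\D$. The argument requires only the definitions of $\D^\downarrow$, $\D_{\max}$, and $(\D^\downarrow)_{\max}$ together with Lemma \ref{lem:Ddown}.
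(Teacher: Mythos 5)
Your proof is correct and follows essentially the same maximality-chasing argument as the paper's own proof: both directions are handled by the same contradiction arguments, using that a strict extension witnessing membership in $\D^\downarrow$ yields a strict extension in $\D$, and that $R$ itself must be its own witness when it is maximal in $\D^\downarrow$.
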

\begin{proof}
	Suppose that $(M, R) \in \D_{\max}$, that is, $(M, R)$ is maximal for $\D$. Then in particular $(M, R) \in \D$, and therefore $(M, R) \in \D^\downarrow$. Now suppose that $(M, R) \not \in (\D^\downarrow)_{\max}$: then there exists some $R' \supsetneq R$ for which $(M, R') \in \D^\downarrow$. But then, by the definition of $\D^\downarrow$, there exists some $R'' \supseteq R' \supsetneq R$ such that $(M, R'') \in \D$. This contradicts the assumption that $(M, R)$ is maximal for $\D$; therefore, $(M, R) \in (\D^\downarrow)_{\max}$.  
    
	Conversely, suppose that $(M, R) \in (\D^\downarrow)_{\max}$. Then in particular $(M, R) \in \D^\downarrow$, and so there exists some $R' \supseteq R$ for which $(M, R') \in \D$. But then $(M, R') \in \D^\downarrow$ too, since $\D \subseteq \D^\downarrow$; and since $(M, R)$ is maximal for $\D^\downarrow$, the only possibility is that $R = R'$ and therefore that $(M, R) \in \D$. Now suppose that $(M, R)$ is not maximal for $\D$. Then there must exist some $R'' \supsetneq R$ for which $(M, R'') \in \D$. But then $(M, R'') \in \D^\downarrow$ too, which contradicts the maximality of $(M, R)$ for $\D^\downarrow$. Therefore, $(M, R)\in \D_{\max}$. 
\end{proof}
We are now able to generalize Theorems \ref{thm:max_exist} and \ref{thm:max_def}:
\begin{theorem}
	Let $\D$ be a strongly first order, relativizable dependency, and suppose that $(M, R) \in \D$. Then $R$ is contained in some $R'$ such that $(M, R') \in \D_{\max}$; and there exists some first order sentence 
	\begin{equation}
		\D^m(R) = \bigvee_{i = 1}^n \exists \tuple y \forall \tuple x(R \tuple x \leftrightarrow \theta_i(\tuple x, \tuple y)),
		\label{eq:maxdef}
	\end{equation}
	where each $\theta_i$ is a first order formula over the empty signature, such that if $(M, R) \in \D_{\max}$ then $(M, R) \models \D^m(R)$. 
	\label{thm:sfo_max_def}
\end{theorem}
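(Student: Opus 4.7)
The plan is to reduce both claims to the downwards-closed case via the operation $\D \mapsto \D^\downarrow$ and the two preparatory lemmas that have just been established. The key observation is that the theorem statement only talks about $\D$ and $\D_{\max}$, and by Lemma \ref{lem:dcmax} the operator $(\cdot)_{\max}$ is invariant under downwards closure, so passing from $\D$ to $\D^\downarrow$ loses nothing of interest for the conclusion.

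Concretely, I would first invoke Lemma \ref{lem:dcexists} to conclude that $\D^\downarrow$ is itself strongly first order and relativizable. Since $\D^\downarrow$ is by construction downwards closed (Lemma \ref{lem:Ddown}), it satisfies all three hypotheses of Theorems \ref{thm:max_exist} and \ref{thm:max_def}.

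For the first half of the statement, given $(M,R) \in \D$, I would use $\D \subseteq \D^\downarrow$ to get $(M,R) \in \D^\downarrow$, then apply Theorem \ref{thm:max_exist} to produce some $R' \supseteq R$ with $(M,R') \in (\D^\downarrow)_{\max}$. By Lemma \ref{lem:dcmax}, $(\D^\downarrow)_{\max} = \D_{\max}$, so $(M,R') \in \D_{\max}$ as required.

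For the second half, I would apply Theorem \ref{thm:max_def} to $\D^\downarrow$ to obtain a first order sentence $(\D^\downarrow)^m(R)$ of the form stated in \eqref{eq:maxdef}, and then simply set $\D^m(R) := (\D^\downarrow)^m(R)$. If $(M,R) \in \D_{\max}$, then by Lemma \ref{lem:dcmax} again $(M,R) \in (\D^\downarrow)_{\max}$, and hence $(M,R) \models \D^m(R)$. No step here involves any genuine obstacle; the work has been pushed entirely into the preceding lemmas (particularly Lemma \ref{lem:dcexists}, which is where the nontrivial syntactic translation using $\sqcup$ and the sentence $\chi_0$ lives), so this final theorem is essentially a bookkeeping corollary of the infrastructure already built.
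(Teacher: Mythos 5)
Your proposal is correct and follows exactly the same route as the paper's own proof: pass to $\D^\downarrow$, use Lemma \ref{lem:dcexists} to preserve strong first-orderness and relativizability, apply Theorems \ref{thm:max_exist} and \ref{thm:max_def} to the downwards-closed $\D^\downarrow$, and transfer the conclusions back via $(\D^\downarrow)_{\max} = \D_{\max}$ from Lemma \ref{lem:dcmax}. Nothing to add.
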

\begin{proof}
	By Lemma \ref{lem:dcexists}, $\D^\downarrow$ is strongly first order and relativizable; and if $(M, R) \in \D$, we have at once that $(M, R) \in \D^\downarrow$. So by Theorem \ref{thm:max_exist}, $R$ is contained into some $R'$ such that $(M, R') \in (\D^\downarrow)_{\max}$. By Lemma \ref{lem:dcmax}, $(\D^\downarrow)_{\max} = \D_{\max}$, so $R$ is contained into some $R'$ such that $(M, R) \in \D_{\max}$. 
	
	Additionally, by Theorem \ref{thm:max_def} we know that all the relations that are maximal for $\D^\downarrow$ (and, hence, for $\D$) satisfy some sentence in the form of Equation (\ref{eq:maxdef}).
\end{proof}

The next lemma will show that, given any strongly first order, relativizable $\D$, we can obtain another strongly first order and relativizable dependency by restricting $\D$ to the subsets of certain first order definable relations:
\begin{lemma}
    Let $\D(R)$ be a $k$-ary strongly first order, relativizable dependency and let $\theta(\tuple x, \tuple y)$ be a first order formula over the empty signature, where $\tuple x$ is a tuple of $k$ distinct variables. Then 
    \[
        \D_\theta = \{(M, R) : (M, R) \in \D, (M, R) \models \exists \tuple y \forall \tuple x(R \tuple x \rightarrow \theta(\tuple x, \tuple y))\}
    \]
    is also strongly first order and relativizable. 
    \label{lem:DtoDtheta}
\end{lemma}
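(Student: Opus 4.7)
The idea is to express the atom $\D_\theta \tuple x$ directly in $\FO(\D, \const)$, and its relativized version $\D_\theta^{(P)} \tuple x$ in $\FO(\D^{(P)}, \const)$, and then apply Proposition \ref{propo:const_safe_sfo} on both fronts.

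First I would show the equivalence
\[
	\D_\theta \tuple x ~\equiv~ \D \tuple x \wedge \exists \tuple y \bigl(\const[\tuple y] \wedge \theta(\tuple x, \tuple y)\bigr).
\]
By Proposition \ref{propo:exists1}, the right-hand side is satisfied by $(\M, X)$ precisely when $(M, X(\tuple x)) \in \D$ and there exists some fixed tuple $\tuple a \in M^{|\tuple y|}$ such that $\M \models \theta(s(\tuple x), \tuple a)$ for every $s \in X$, which by flatness of $\theta$ (Proposition \ref{propo:flat}) is exactly the condition $(M, X(\tuple x)) \models \exists \tuple y \forall \tuple x(R\tuple x \rightarrow \theta(\tuple x, \tuple y))$ that appears in the definition of $\D_\theta$. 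The empty-team case requires a separate but immediate check: both sides reduce to the condition $(M, \emptyset) \in \D$, since the inner formula is vacuously satisfied by $\emptyset$ on both sides. Once this equivalence is established, every sentence of $\FO(\D_\theta)$ is equivalent to one of $\FO(\D, \const)$, which by Proposition \ref{propo:const_safe_sfo} applied to the strongly first order, relativizable collection $\{\D\}$ is equivalent to some first order sentence. Hence $\D_\theta$ is strongly first order.

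Next I would handle relativizability by an analogous translation. Writing $\theta^P(\tuple x, \tuple y)$ for the standard relativization of $\theta$ to $P$ (replacing each $\exists z$ by $\exists z (Pz \wedge \cdot)$ and each $\forall z$ by $\forall z(Pz \rightarrow \cdot)$, which is available since $\theta$ is over the empty signature), the unfolding of the definition of $\D_\theta^{(P)}$ gives
\[
	\D_\theta^{(P)} \tuple x ~\equiv~ \D^{(P)} \tuple x \wedge \exists \tuple y \Bigl(\bigwedge_{y \in \tuple y} Py \wedge \const[\tuple y] \wedge \theta^P(\tuple x, \tuple y)\Bigr),
\]
verified by the same Proposition \ref{propo:exists1} argument: the constancy constraint forces $\tuple y$ to denote a single tuple $\tuple a$, the conjuncts $Py$ confine $\tuple a$ to $(P^\M)^{|\tuple y|}$, and $\theta^P$ evaluates $\theta$ inside $P^\M$. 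Hence every sentence of $\FO(\D_\theta^{(P)})$ is equivalent to one of $\FO(\D^{(P)}, \const)$, which by the relativized half of Proposition \ref{propo:const_safe_sfo} is again equivalent to a first order sentence, and a fortiori to a sentence of $\FO(\D_\theta)$. This establishes relativizability.

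The only genuinely delicate step is the verification of the first equivalence: one must check it for arbitrary teams, including the empty team, and one must be sure that the outer $\D \tuple x$ conjunct plays the role of enforcing membership in $\D$ while the constancy-guarded existential captures the extra first-order condition on $R$. Everything else is routine bookkeeping, and the two conclusions then follow uniformly from Proposition \ref{propo:const_safe_sfo}.
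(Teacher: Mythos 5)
Your proposal is correct and follows essentially the same route as the paper: the paper also establishes the equivalences $\D_\theta \tuple x \equiv \D \tuple x \wedge \exists \tuple a (\const[\tuple a] \wedge \theta(\tuple x, \tuple a))$ and $\D_\theta^{(P)} \tuple x \equiv \D^{(P)} \tuple x \wedge \exists \tuple a (\const[\tuple a] \wedge \bigwedge_{a \in \tuple a} Pa \wedge \theta^{(P)}(\tuple x, \tuple a))$ via Propositions \ref{propo:flat} and \ref{propo:exists1}, and then concludes by Proposition \ref{propo:const_safe_sfo}. Your explicit treatment of the empty-team case is a minor addition the paper leaves implicit, but it changes nothing substantive.
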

\begin{proof}
	Observe that, by Propositions \ref{propo:flat} and \ref{propo:exists1}, $\D_\theta \tuple x$ is logically equivalent to the $\FO(\D, \const)$ formula 
    \[
	    \D \tuple x \wedge \exists \tuple a (\const[\tuple a] \wedge \theta(\tuple x, \tuple a)).
    \]
	Therefore, every $\FO(\D_\theta)$ sentence is equivalent to some $\FO(\D, \const)$ sentence and hence -- by Proposition \ref{propo:const_safe_sfo} -- to some $\FO$ sentence. Therefore, $\D_\theta$ is strongly first order. 
    
    To show that $\D_\theta$ is also relativizable, observe that its relativization to some unary predicate $P$ can be defined in terms of constancy atoms and of the relativization of $\D$ to $P$, since 
    \[
	    \D_\theta^{(P)} \tuple x \equiv \D^{(P)} \tuple x \wedge \exists \tuple a \left (\const[\tuple a] \wedge \bigwedge_{a \in \tuple a} P a \wedge \theta^{(P)} (\tuple x, \tuple a)\right )
	\]
	where $\theta^{(P)}$ is the relativization (in the usual First Order Logic sense) of $\theta$ to the unary predicate $P$. 

	Therefore, every $\FO(\D_\theta^{(P)})$ sentence is equivalent to some $\FO(\D^{P}, \const)$ sentence, and hence  -- again by Proposition \ref{propo:const_safe_sfo} -- to some first order sentence. 
\end{proof}

The next lemma will be the main ingredient of our characterization of doubly strongly first order dependencies. In brief, it will show that whenever both $\D$ and $\cneg \D$ are strongly first order there can be no infinite ``stair'' of relations satisfying alternatively $\D$ and $\cneg \D$ as per Figure \ref{fig:nostair}:
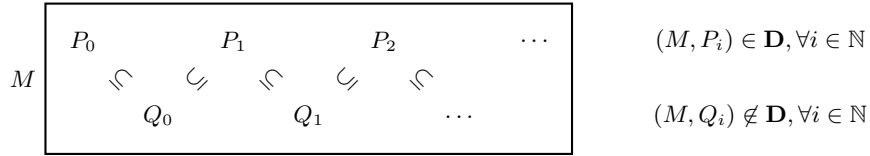
\begin{figure}
		 \begin{center}
		 \begin{tikzpicture}
			 \node[circle,minimum size=2.5em] (P0) at (0,0) {$P_0$};
			 \node[circle,minimum size=2.5em] (Q0) at (1, -1) {$Q_0$}; 
			 \node[circle,minimum size=2.5em] (P1) at (2, 0) {$P_1$}; 
			 \node[circle,minimum size=2.5em] (Q1) at (3,-1) {$Q_1$};
			 \node[circle,minimum size=2.5em] (P2) at (4, 0) {$P_2$}; 
			 \node[circle,minimum size=2.5em] (Q2) at (5, -1) {$\ldots$};
			 \node (P3) at (6, 0) {$\ldots$};

			 \draw[white] (P0) to node[black, sloped]{$\subseteq$} (Q0);
			 \draw[white] (Q0) to node[black, sloped]{$\subseteq$} (P1);
			 \draw[white] (P1) to node[black, sloped]{$\subseteq$} (Q1);
			 \draw[white] (Q1) to node[black, sloped]{$\subseteq$} (P2);
			 \draw[white] (P2) to node[black, sloped]{$\subseteq$} (Q2);
			 
			 \draw[black, thick] (-0.5,-1.5) rectangle (6.5,0.5);
			 \node (M) at (-0.8, -0.5) {$M$};

			 \node (D) at (9, 0) {$(M, P_i) \in \D, \forall i \in \mathbb  N$};
			 \node (notD) at (9, -1) {$(M, Q_i) \not \in \D, \forall i \in \mathbb  N$};
		 \end{tikzpicture}
		 \end{center}
		 \caption{The configuration of $k$-ary relations $P_i$ and $Q_i$ over some domain $M$ that is forbidden by Lemma \ref{lemma:nostair} for $\D$, $\cneg \D$ strongly first order.}
		 \label{fig:nostair}
	 \end{figure}
\begin{lemma}
	Let $\D$ be a $k$-ary dependency, and let $(P_i)_{i \in \mathbb N}$ and $(Q_i)_{i \in \mathbb N}$ be $k$-ary relations over the same domain $M$ such that 
	\begin{enumerate}
		\item For all $i \in \mathbb N$, $(M, P_i) \in \D$; 
		\item For all $i \in \mathbb N$, $(M, Q_i) \in \cneg \D$; 
		\item For all $i \in \mathbb N$, $P_i \subseteq Q_i \subseteq P_{i+1}$. 
	\end{enumerate}
	Then at least one between $\D$ and $\cneg \D$ is not strongly first order.
	\label{lemma:nostair}
\end{lemma}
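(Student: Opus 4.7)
The proof is by contradiction: assume both $\D$ and $\cneg\D$ are strongly first order, and derive a contradiction from the staircase. First, the inclusions in condition 3 must be strict (otherwise some $Q_i$ would lie simultaneously in $\D$ and $\cneg\D$), so we have an infinite strictly ascending chain of $k$-ary relations in $M$, forcing $M$ to be infinite. By Lemma \ref{lem:dcexists} both $\D^\downarrow$ and $(\cneg\D)^\downarrow$ are then strongly first order, and by Lemma \ref{lem:dcmax}, $(\D^\downarrow)_{\max}=\D_{\max}$ and symmetrically for $\cneg\D$.

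The heart of the argument is to apply Theorem \ref{thm:sfo_max_def} (through the downward closures) to obtain finitely many first-order templates $\theta^\D_1,\ldots,\theta^\D_n$ and $\theta^{\cneg\D}_1,\ldots,\theta^{\cneg\D}_m$ over the empty signature such that every relation in $\D_{\max}$ (resp.\ $(\cneg\D)_{\max}$) over $M$ is defined by some $\theta^\D_i$ (resp.\ $\theta^{\cneg\D}_j$) applied to suitable parameters. The same theorem provides, for each $P_i$, a maximal cover $R_i\in\D_{\max}$ with $P_i\subseteq R_i$, and similarly each $Q_j$ sits inside some $S_j\in(\cneg\D)_{\max}$. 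By pigeonhole, infinitely many $R_i$'s share a single $\D$-template $\theta^\D$ and infinitely many $S_j$'s share a single $\cneg\D$-template $\theta^{\cneg\D}$. After using Proposition \ref{propo:fixtype} to assume both templates fix the identity type of their parameter tuples, Proposition \ref{propo:autom} produces bijections $M\to M$ that carry any two same-template relations to one another. Since every generalized dependency is closed under isomorphism, such bijections preserve membership in both $\D$ and $\cneg\D$. Choosing indices carefully and composing the resulting bijections with the strict containments $P_i\subsetneq Q_i\subsetneq P_{i+1}$, one transports some $P_i$ onto a relation that the staircase pattern forces to be equal to some $Q_{j'}$; this relation then lies simultaneously in $\D$ and in $\cneg\D$ --- the desired contradiction.

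The main obstacle, on which the delicacy of the argument hinges, is that Theorem \ref{thm:sfo_max_def} requires $\D$ to be relativizable, whereas the lemma is stated without that assumption. I see two natural routes to close this gap: (a) prove a maximal-element normal form for $\D_{\max}$ directly from strong first orderness, by reusing the team-semantic encoding $\forall pq\,\exists\tuple w\big((p\neq q \vee \tuple x=\tuple w)\wedge \D\tuple w\big)$ from Lemma \ref{lem:dcexists} and extracting the finite template list from the equivalent first-order sentence guaranteed by strong first orderness; or (b) sharpen the pigeonhole/bijection step so that it needs only a weaker finiteness property of $\D_{\max}$ that follows from strong first orderness alone, together with a Ramsey-style argument to locate indices $i,j$ for which the transported relation is guaranteed to land on a specific $Q_{j'}$. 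In either route the combinatorial core --- finitely many templates, an identity-type-fixing bijection of $M$, and the forced collision of the $\D$-side with the $\cneg\D$-side of the staircase --- is what delivers the contradiction.
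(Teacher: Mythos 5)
Your proposal has two genuine gaps, and the second one is fatal as written. First, as you yourself note, Theorem \ref{thm:sfo_max_def} is only available for \emph{relativizable} dependencies, while Lemma \ref{lemma:nostair} is stated without that hypothesis; neither of your two proposed repairs is carried out, and route (a) is not obviously viable, since the normal form for $\D_{\max}$ in the paper is extracted from the machinery of \cite{galliani2019characterizing}, where relativizability is used essentially. Second, and more seriously, the ``forced collision'' step is an assertion rather than an argument. Proposition \ref{propo:autom} gives you bijections of $M$ carrying one template-defined relation onto another template-defined relation with the same defining formula; it says nothing about where such a bijection sends the relations $P_i$ and $Q_j$ themselves, which are arbitrary and need not be definable at all. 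Having infinitely many maximal covers $R_i \in \D_{\max}$ pairwise isomorphic (and likewise for the $S_j$) produces no contradiction by itself, and there is no mechanism in your sketch that transports some $P_i$ \emph{exactly onto} some $Q_{j'}$; the containments $P_i \subsetneq Q_i \subsetneq P_{i+1}$ do not interact with the bijections in any controlled way. So the combinatorial core you describe does not yet exist.

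For contrast, the paper's proof deliberately avoids all of this template machinery precisely so that no relativizability is needed. It assumes both $\D$ and $\cneg\D$ strongly first order, takes $M$ countable (L\"owenheim--Skolem), and packages the whole staircase into a single model $\M = (\mathbb N, <, P, Q)$ with $P = \{(\tuple m, i) : \tuple m \in P_i\}$ and similarly for $Q$. The $\FO(\cneg\D)$ sentence $\exists i (i < d \wedge \forall \tuple w(\lnot P \tuple w i \vee (P \tuple w i \wedge \cneg \D \tuple w)))$ expresses that some union of $P_i$'s with indices below $d$ fails $\D$, and is equivalent to a first order $\phi(d)$; dually one gets $\psi(d)$ for the $Q_j$'s and $\D$. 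Both $\exists n\,\phi(n)$ and $\exists n\,\psi(n)$ are false in $\M$ because every standard index has finitely many predecessors and the union over a finite initial set of indices collapses to a single $P_{\max(I)}$ or $Q_{\max(J)}$. But at a nonstandard element $d$ of any elementary extension, $\bigcup_i P_i = \bigcup_i Q_i$ lies below $d$, and this union either satisfies $\D$ (making $\psi(d)$ true) or not (making $\phi(d)$ true), so $\M$ would have no nonstandard elementary extension, contradicting the upward L\"owenheim--Skolem theorem. If you want to salvage your approach you would need to both eliminate the relativizability assumption and supply an actual argument for the collision; the paper's route sidesteps both problems.
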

\begin{proof}
	Let $\D$ and $\cneg \D$ be strongly first order, and suppose that relations $P_i, Q_i$ as per our hypothesis exist over some domain $M$.
	 	 
	 Then $M$ is clearly infinite, and by the L\"owenheim-Skolem Theorem we can assume that it is countable. So we can identify it, up to isomorphism, with $\mathbb N$. Now let the relation $<$ be the usual ordering over the natural numbers and let $P$ and $Q$ be $(k+1)$-ary relations whose interpretations are $\{(\tuple m, i): \tuple m \in P_i\}$ and $\{(\tuple m, i): \tuple m \in Q_i\}$
	respectively. Now consider the model $\M = (\mathbb N, <, P, Q)$. I state that, if $\FO(\D) \equiv \FO(\cneg \D) \equiv \FO$, there is no elementary extension of $\M$ that corresponds to a non-standard model of $\mathbb N$, and in particular that $\M$ has no uncountable elementary extension. This is impossible due to the L\"owenheim-Skolem Theorem, and so one between $\D$ and $\cneg \D$ must not be strongly first order. 
    
    In order to show that $\M$ has no non-standard elementary extensions, consider the $\FO(\cneg \D)$ sentence 
    \begin{equation}
        \exists i (i < d \wedge \forall \tuple w(\lnot P \tuple w i \vee (P \tuple w i \wedge \cneg \D \tuple w)))
        \label{eq:union_negD}
    \end{equation}
	in the signature of $\M$ augmented by some new constant symbol $d$. Since $\cneg \D$ is strongly first order, this sentence is equivalent to some first order sentence $\phi(d)$. I state that $\phi(d)$ is true if and only if there exists a set of indexes $I \subseteq M$ such that $i < d$ for all $i \in I$ and such that $\left(M, \bigcup_{i \in I}P_i)\right) \not \in \D$, where $P_i$ is the relation $\{\tuple m: (\tuple m, i) \in P\}$. Indeed, if such a family of indexes exists, we can satisfy (\ref{eq:union_negD}) by choosing the values of $I$ as the values of the variable $i$, then taking all possible values of $\tuple w$ for all chosen $i$, and then splitting the team by putting in the right disjunct all the assignments $s$ for which $P \tuple w i$ (that is, for which $s(\tuple w) \in P_{s(i)}$); and conversely, if (\ref{eq:union_negD}) can be satisfied, the values that the variable $i$ can take will form a set $I$ of indexes $< d$ such that $\bigcup_{i \in I} P_i$ does not satisfy $\D$. 
    
    Now, for the model $\mathfrak M$ with domain $\mathbb N$ described above no such family $I$ may be found no matter the choice of $d$. Indeed, there will be only finitely many indexes less than $d$, and so if all elements of $I$ are less than $d$ then $\bigcup_{i \in I} P_i = P_{\max(I)}$, which satisfies $\D$. Hence, $\M \models \lnot \exists n \phi(n)$. 
    
    Similarly, the $\FO(\D)$ sentence 
    \begin{equation}
		\exists j (j < d \wedge \forall \tuple w (\lnot Q \tuple w j \vee (Q \tuple w j \wedge \D(\tuple w)))).
		\label{eq:union_D}
	\end{equation}
	is true if and only there exists a set $J$ of indexes $< d$ such that $\bigcup_{j \in J} Q_j$ satisfies $\D$; and as above, this sentence must be equivalent to some first order $\psi(d)$, because $\D$ is strongly first order, and $\M \models \lnot \exists n \psi(n)$ because every $d \in \mathbb N$ has finitely many predecessors.
	
	Now let $\M'$ be any elementary extension of $\M$, and let $d$ be any nonstandard element of it (that is, any element bigger than any $n \in \mathbb N$ in the ordering). Then at least one between $\phi(d)$ and $\psi(d)$ must hold. Indeed, in $\M'$ -- like in $\M$ -- we will have that $P_i \subseteq Q_i \subseteq P_{i+1}$ for all indexes $i \in \mathbb N$; and therefore, 
	\[
	    \bigcup_{i \in \mathbb N} P_i = \bigcup_{i \in \mathbb N}Q_i 
	\]
	where all indexes in $\mathbb N$ are less than our element $d$. If this union satisfies $\D$, $\psi(d)$ holds; and if if instead it does not satisfy $\D$, $\phi(d)$ holds. So $\M' \models (\exists n \phi(n)) \vee (\exists n \psi(n))$ and $\M'$ is not an elementary extension of $\M$, contradicting our premise: therefore, $\M$ cannot have elementary extensions with non-standard elements (and in particular it cannot have uncountable elementary extensions, which is impossible due to the L\"owenheim-Skolem Theorem). 
\end{proof}

The next lemma will provide a ``local'' characterization of the relativizable dependencies $\D$ such that both $\D$ and $\cneg \D$ are strongly first order, by showing that whenever $(M, R) \in \D$ the relation $R$ must satisfy a sentence of a certain form that in turn entails $\D(R)$. Once such a characterization is obtained, deriving a global characterization will be a simple matter of applying the compactness theorem: 
\begin{lemma}
	Let $\D$ be a relativizable dependency such that both $\D$ and $\cneg \D$ are strongly first order and let $\M = (M, R) \in \D$ for $M$ countable. Then there exists a first order sentence $\eta_\M$ of the form
	\begin{equation}
		\eta_\M = \psi \wedge \bigwedge_{i=1}^{n} \exists \tuple y_i (\forall \tuple x (R \tuple x \rightarrow \theta_i(\tuple x, \tuple y_i))) \wedge \bigwedge_{j=1}^{n'} \lnot \exists \tuple z_{j} (\forall \tuple x(R \tuple x \rightarrow \xi_j(\tuple x, \tuple z_j)))
		\label{eq:eta_M}
	\end{equation}
	where $\psi$ is a first order sentence over the empty signature and all the $\theta_i$ and the $\xi_j$ are first order formulas over the empty signature, such that 
	\begin{enumerate}
		\item $\M \models \eta_\M$; 
		\item $\eta_\M \models \D(R)$. 
	\end{enumerate}
	\label{lemma:eta_M}
\end{lemma}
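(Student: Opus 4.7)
The approach is a compactness argument combined with Lemma \ref{lemma:nostair}. Since $\D$ is strongly first order, it is first-order definable, say by a sentence $\phi_\D(R)$ over $\{R\}$. Let $T$ be the set of all sentences over $\{R\}$ of the three single-conjunct forms occurring in (\ref{eq:eta_M}) that hold in $\M = (M, R)$: empty-signature sentences $\psi$, "upper bound" sentences $\exists \tuple y \forall \tuple x(R\tuple x \to \theta(\tuple x, \tuple y))$, and "exclusion" sentences $\lnot \exists \tuple z \forall \tuple x(R\tuple x \to \xi(\tuple x, \tuple z))$. It will suffice to show $T \models \phi_\D(R)$: by the compactness theorem, a finite sub-conjunction $\eta_\M$ of members of $T$ then already entails $\phi_\D(R)$, and collecting all empty-signature conjuncts into a single $\psi$ gives exactly the shape of (\ref{eq:eta_M}).

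To establish the entailment, suppose for contradiction that $(M', R') \models T \cup \{\lnot \phi_\D(R)\}$. By Löwenheim--Skolem one may take $M'$ countable; since $T$ contains the full empty-signature theory of $M$, and countable models of such a theory are isomorphic as bare sets, one may identify $M' = M$. This yields a relation $R' \subseteq M^k$ with $(M, R) \in \D$ and $(M, R') \in \cneg \D$ which agrees with $R$ on every "is contained in some $\theta(M, \tuple a)$" and "is not contained in any $\xi(M, \tuple z)$" assertion.

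From this I would manufacture an infinite stair $P_0 \subseteq Q_0 \subseteq P_1 \subseteq Q_1 \subseteq \cdots$ over $M$ with $P_i \in \D$ and $Q_i \in \cneg \D$, contradicting Lemma \ref{lemma:nostair}. Setting $P_0 := R$, Theorem \ref{thm:sfo_max_def} produces a maximal extension $P_0 \subseteq P_0^* = \theta(M, \tuple a) \in \D_{\max}$. The corresponding form-2 sentence lies in $T$, so $R' \subseteq \theta(M, \tuple a')$ for some $\tuple a'$, and Propositions \ref{propo:fixtype}--\ref{propo:autom} supply a bijection $\mathfrak{h}$ of $M$ sending $\theta(M, \tuple a)$ to $\theta(M, \tuple a')$. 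Thus $\mathfrak{h}(P_0)$ and $R'$ both sit inside the same maximal $\D$-relation $\theta(M, \tuple a')$, providing the template for alignment; the stair is then extended by iterating this construction, using max-closure to step upward in $\D$ and the persistent failure of any candidate $\eta_\M$ to manufacture a $\cneg \D$-extension at each stage.

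The principal obstacle is carrying out this iteration rigorously so that each new element genuinely contains the previous ones with strictly alternating $\D/\cneg\D$-membership. A single application of compactness delivers only one $\cneg \D$-counterexample $R'$, whereas the stair demands countably many aligned extensions; the natural remedy is to apply compactness once to an enriched theory that simultaneously describes the whole countable zigzag (using fresh predicate symbols $R_0, R_1, \ldots$ one for each stair level, imposing the chain inclusions $R_0 \subseteq R_1 \subseteq \cdots$ together with $\phi_\D(R_{2i}) \wedge \lnot \phi_\D(R_{2i+1})$, and transferring the containment-signature hypotheses uniformly across all levels), rather than iterating compactness. The automorphism-based alignment from Propositions \ref{propo:fixtype} and \ref{propo:autom} then ensures strict inclusions can be realised at every stage of the resulting model, producing the configuration forbidden by Lemma \ref{lemma:nostair}.
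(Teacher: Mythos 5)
Your setup matches the paper's: you form the same theory $T$ of empty-signature sentences, positive containment sentences, and negated containment sentences true in $(M,R)$, reduce the lemma to $T \models \D(R)$ by compactness, pass to a countable counterexample $(M, R')$ with $(M,R') \in \cneg\D$ satisfying $T$, and aim to contradict Lemma \ref{lemma:nostair}. That skeleton is correct and is exactly the paper's strategy. The gap is the part you yourself flag as "the principal obstacle": producing the infinite stair. Your proposed remedy -- one application of compactness to a theory in fresh predicates $R_0 \subseteq R_1 \subseteq \cdots$ with alternating $\D$/$\cneg\D$ membership -- does not resolve anything, because compactness still requires you to exhibit, for every $n$, a model realizing a finite alternating chain of length $n$; that finite satisfiability claim \emph{is} the hard content, and it is precisely what the paper proves by induction on $n$. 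Moreover, the mechanism you sketch for generating the chain ("max-closure to step upward in $\D$") cannot work as stated: once you take a maximal $\D$-extension $P_0^* \supseteq R$, there is in general no $\cneg\D$-relation above it that can itself be extended further (for the constancy atom, the maximal $\cneg\D$-relations are all of $M^k$ and the upward iteration halts after one step), and indeed if an upward iteration always succeeded, Lemma \ref{lemma:nostair} would wrongly rule out constancy being doubly strongly first order.

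The missing ideas are two. First, Lemma \ref{lem:DtoDtheta}: at each stage one must restrict $\D$ (resp.\ $\cneg\D$) to the relations contained in some set defined by the empty-signature formula produced at the previous stage, and take maximal elements of the \emph{restricted} dependency; this is what forces the new level to sit inside the old one and keeps the chain inclusions intact. Second, the chain is built \emph{downward}, not upward: the induction step inserts a fresh pair $R^{n+1}_{n+1} \subseteq S^{n+1}_{n+1}$ just above $R$ (using the counterexample $S = R'$ to certify $\cneg\D$-membership of the new $S$-level via Proposition \ref{propo:autom}), and then transports the entire previously built chain upward by a bijection of $M$, which preserves inclusions and $\D$/$\cneg\D$-membership because everything in sight is defined over the empty signature with identity types fixed. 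Your alignment-by-bijection remark gestures at the right tool, but without the restricted dependencies and the insert-below-and-transport structure, the induction does not close and the stair cannot be assembled.
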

\begin{proof}
	Let $T$ be the theory 
	\begin{align*}
		T =   	& \{\psi: M \models \psi\} ~ \cup\\
			&  \{\exists \tuple y \forall \tuple x (R \tuple x \rightarrow \theta(\tuple x, \tuple y)) :  (M, R) \models \exists \tuple y \forall \tuple x (R \tuple x \rightarrow \theta(\tuple x, \tuple y))\} ~\cup\\
			& \{\lnot \exists \tuple z \forall \tuple x (R \tuple x \rightarrow \xi(\tuple x, \tuple z)) :  (M, R) \not \models  \exists \tuple z \forall \tuple x (R \tuple x \rightarrow \xi(\tuple x, \tuple z))\} 
	\end{align*}
	where $\tuple x$ is a tuple of distinct variables such that $|\tuple x|$ is the arity of $\D$, $\psi$ ranges over all first order sentences over the empty signature, $\tuple y$ and $\tuple z$ range over tuples of distinct variables disjoint from $\tuple x$ of all finite lengths (including the empty tuple of variables, in which case their existential quantification is vacuous), and $\theta(\tuple x, \tuple y)$ and $\xi(\tuple x, \tuple z)$ range over first order formulas with free variables in $\tuple x \tuple y$ (respectively $\tuple x \tuple z$) over the empty signature. If we can show that $T \models \D(R)$, the conclusion follows: indeed, by compactness we can then find a finite theory $T_f \subseteq T$ such that $T_f \models \D(R)$, and then $\bigwedge T_f$ is the required expression in the form of (\ref{eq:eta_M}).

	Suppose that this is not the case, that is, there exists some model $\A$ such that $\A \models T$ but $\A \not \in \D$. By the L\"owenheim-Skolem Theorem we can assume that $\dom(\A)$ is also countable; and since $\A$ and $\M$ satisfy the same sentences over the empty signature, $\A$ and $\M$ have the same cardinality (finite or countably infinite). Thus, up to isomorphism, we can assume that the domain of $\A$ is the same as that of $\M$, i.e., $\A = (M, S)$ for some relation $S$. Also, $R$ is necessarily nonempty: indeed, if $R$ were empty then $\forall \tuple x (R \tuple x \rightarrow \bot)$ would be in $T$, and so since $(M, S) \models T$ the relation $S$ would also be empty, which would contradict the assumption that $(M, R) \in \D$ but $(M, S) \not \in \D$. Then $S$ is also necessarily nonempty, because $\lnot \forall \tuple x (R \tuple x \rightarrow \bot)$ is in $T$.
	
	I aim to prove, by induction on $n$, that for every integer $n \in \mathbb N$ there exists in $M$ a descending chain of relations $S^n_0 \supseteq R^n_0 \supseteq S^n_1 \supseteq R^n_1 \ldots S^n_n \supseteq R^n_n \supseteq R$ such that 
	\begin{enumerate}
		\item $(M, S^n_i) \in \cneg \D$ and $(M, R^n_i)  \in \D$ for all $i = 1 \ldots n$;
		\item Every $R^n_i$, for $0 \leq i \leq n$, is defined by some formula $\theta_i(\tuple x, \tuple y)$ over the empty signature that fixes the identity type of $\tuple y$ and by some tuple of elements $\tuple a_i^n$, in the sense that 
			\[
				R^n_i = \{\tuple m:  M \models \theta_i(\tuple m, \tuple a_i^n)\};
			\]
		\item Every $S^n_i$, for $0 \leq i \leq n$, is defined by some formula $\xi_i(\tuple x, \tuple z)$ over the empty signature that fixes the identity type of $\tuple z$ and by some tuple of elements $\tuple b^n_i$, in the sense that 
			\[
				S^n_i =  \{\tuple m : M \models \xi_i(\tuple m, \tuple b_i^n)\}.
			\]
	\end{enumerate}
	\begin{description}
		\item[Base Case:] 
			Figure \ref{fig:basecase} illustrates the argument for the base case.
		
			Since $(M, S) \not \in \D$, $(M, S) \in \cneg \D$; and therefore, by Theorem \ref{thm:sfo_max_def}, there exists some $S' \supseteq S$ such that $(M, S') \in (\cneg \D)_{\max}$. Moreover, again by Theorem \ref{thm:sfo_max_def}, $S'$ is first order definable over the empty signature: $(M, S') \models \exists \tuple z  \forall \tuple x(S' \tuple x \leftrightarrow \xi_0(\tuple x, \tuple z))$, where $\xi_0(\tuple x, \tuple z)$ is a first order formula over the empty signature which, by Proposition \ref{propo:fixtype}, we can assume fixes the identity type of $\tuple z$. Then, since $S \subseteq S'$, $(M, S) \models \exists \tuple z \forall \tuple x(S \tuple x \rightarrow \xi_0(\tuple x, \tuple z))$; and since $(M, S)$ is a model of the theory $T$, this implies that $(M, R) \models \exists \tuple z \forall \tuple x(R \tuple x \rightarrow \xi_0(\tuple x, \tuple z))$ too.
			
			Now consider the dependency 
			\[
				\E_0 = \{(M', R') \in \D: (M', R') \models \exists \tuple z \forall \tuple x (R' \tuple x \rightarrow \xi_0(\tuple x, \tuple z))\}.
			\]

			By Lemma \ref{lem:DtoDtheta}, $\E_0$ is also strongly first order and relativizable, and $(M, R) \in \E_0$; therefore, by Theorem \ref{thm:sfo_max_def}, there exists some $R_0^0 \supseteq R$ such that $(M, R_0^0) \in (\E_0)_{\max}$, and this $R_0^0$ is definable by some $\theta_0(\tuple x, \tuple y)$ over the empty signature that (again by Proposition \ref{propo:fixtype}) we can assume fixes the identity type of $\tuple y$ and by some tuple $\tuple a_0^0 \in M^{|y|}$, in the sense that $R_0^0 = \{\tuple m : M \models \theta_0^0(\tuple m, \tuple a_0^0)\}$.

			Since $(M, R^0_0) \in (\E_0)_{\max}$, $(M, R^0_0) \in \E_0$. Therefore, $(M, R_0^0) \in \D$, and there exists some tuple $\tuple b_0^0$ in $M$ such that $R_0^0 \subseteq S_0^0 = \{\tuple m : M \models \xi_0(\tuple m, \tuple b_0^0)\}$. Now $S^0_0$ is nonempty, as it contains $R^0_0$ and hence $R$, and $S'$ is nonempty, as it contains $S$, and they are both defined by the same formula $\xi_0(\tuple x, \tuple z)$ that fixes the identity type of $\tuple z$; and therefore, by Proposition \ref{propo:autom}, there exists a bijection $\mathfrak h: M \rightarrow M$ that maps $S'$ into $S_0^0$. This implies that $(M, S_0^0)$ is isomorphic to $(M, S')$, and thus that $(M, S_0^0) \in \cneg \D$ as required. 
				\begin{figure}
		 \begin{center}
		 \begin{tikzpicture}
			 \draw[black, thick] (-0.5,-2.5) rectangle (2.5,0.5);
			 \node (M) at (-0.8, -1) {$M$};

			 \node[circle,draw,black] (R) at (0,-0) {$R$}; 
			 \node[circle,draw,black] (S) at (0,-2) {$S$}; 

			 \draw[black, dashed] (-0.4,-2.4) rectangle (2.4, -1.6);
			 \node at (2.2,-2) (Sp) {$S'$};

			 \draw[black] (-0.4,-0.4) rectangle (1,0.4);
			 \node at (0.8,0) (R00) {$R_0^0$};
			 
			 \draw[black] (1,0.4) rectangle (2.4,-0.4);
			 \node at (2.2,0) (S00) {$S_0^0$};

			 \draw[black, thick, ->] (2.2,-1.5) to node[anchor=east]{$\mathfrak h$} (2.2,-0.5);
		 \end{tikzpicture}
		 \end{center}
				\caption{The base case: building $S_0^0$ and $R_0^0$ given $S$ and $R$. $S$ is contained in a maximal $S'$ that satisfies $\cneg \D$, and this $S'$ is defined by some $\xi_0$ that fixes the identity type of its parameters. If $S$ is contained in some relation defined by $\xi_0$, so is $R$; and therefore there is a maximal $R_0^0$ (defined by some $\theta_0$ that fixes the identity type of its parameters) containing $R$ that satisfies $\D$ and is contained in some relation $S_0^0$ defined by $\xi_0$. Finally, $S'$ and $S_0^0$ must be isomorphic, because they are both nonempty and defined by $\xi_0$, so $S_0^0$ satisfies $\cneg \D$.}
		 \label{fig:basecase}
	 \end{figure}
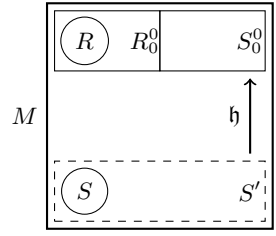

 \item[Induction Case:] Figure \ref{fig:induct_case} illustrates the argument for the induction case. 

	 Suppose that a chain $S^n_0 \supseteq R^n_0 \ldots S^n_n \supseteq R^n_n \supseteq R$ exists as per our induction hypothesis.
			Then, in particular, $R^n_n$ is defined by some $\theta_n(\tuple x, \tuple y)$ that fixes the identity type of $\tuple y$; and since $R \subseteq R^n_n$, $(M, R) \models \exists \tuple y \forall \tuple x(R \tuple x \rightarrow \theta_n(\tuple x, \tuple y))$. But then, since $(M, S) \models T$, $(M, S) \models \exists \tuple y \forall \tuple x(S \tuple x \rightarrow \theta_n(\tuple x, \tuple y))$ too. Now consider the dependency 
			\[
				\F_n = \{(M', R') \in \cneg \D : (M', R') \models \exists \tuple y \forall \tuple x(R' \tuple x \rightarrow \theta_n(\tuple x, \tuple y))\}.
			\]
			By Lemma \ref{lem:DtoDtheta}, $\F_n$ is strongly first order and relativizable, and $(M, S) \in \F_n$; therefore, there exists some $S' \supseteq S$ such that $(M, S') \in (\F_n)_{\max}$, and this $S'$ is defined by some $\xi_{n+1}(\tuple x, \tuple z)$ over the empty signature that fixes the identity type of $\tuple z$. Therefore, since $S \subseteq S'$, $(M, S) \models \exists \tuple z \forall \tuple x(S \tuple x \rightarrow \xi_{n+1}(\tuple x, \tuple z))$; and since $(M, S) \models T$, this implies that $(M, R) \models \exists \tuple z \forall \tuple x(R \tuple x \rightarrow \xi_{n+1}(\tuple x, \tuple z))$ as well. 

			Finally, consider the dependency 
			\[
				\E_{n+1} = \{(M', R') \in \D : (M', R') \models \exists \tuple z \forall \tuple x(R' \tuple x \rightarrow \xi_{n+1}(\tuple x, \tuple z))\}.
			\]
			Again, by Lemma \ref{lem:DtoDtheta}, $\E_{n+1}$ is strongly first order and relativizable, and $(M, R) \in \E_{n+1}$; therefore, there exists some $R^{n+1}_{n+1} \supseteq R$ such that $(M, R^{n+1}_{n+1}) \in (\E_{n+1})_{\max}$. This $R^{n+1}_{n+1}$ will, again, be first order definable over the empty signature by some $\theta_{n+1}(\tuple x, \tuple y)$ that we can assume fixes the identity type of $\tuple y$ by Proposition \ref{propo:fixtype}, and by some tuple $\tuple a^{n+1}_{n+1}$. Furthermore, since $(M, R^{n+1}_{n+1}) \in \E_{n+1}$, it will be the case that $(M, R^{n+1}_{n+1}) \in \D$ and that there exists some $\tuple b^{n+1}_{n+1}$ such that $R^{n+1}_{n+1} \subseteq S^{n+1}_{n+1} = \{\tuple m : M \models \xi_{n+1}(\tuple x, \tuple b^{n+1}_{n+1})\}$. Now since $S^{n+1}_{n+1}$ and $S'$ are defined by the same $\xi_{n+1}(\tuple x, \tuple z)$ and are both nonempty, by Proposition \ref{propo:autom} there exists some isomorphism $\mathfrak g$ between $(M, S')$ and $(M, S^{n+1}_{n+1})$. Therefore, $(M, S^{n+1}_{n+1}) \in \F_n$: thus, $(M, S^{n+1}_{n+1}) \in \cneg \D$, and there exists some $\tuple a^{n+1}_n$ for which $S^{n+1}_{n+1} \subseteq R^{n+1}_n = \{\tuple m : \theta_n(\tuple m, \tuple a^{n+1}_n)\}$. 

			Now, $R^{n+1}_{n}$ and $R^n_n$ are defined by the same formula $\theta_n(\tuple x, \tuple y)$, which fixes the identity type of $\tuple y$, and they are both nonempty since they both contain $R$. Therefore, by Proposition \ref{propo:autom} there exists some bijection $\mathfrak h: M \rightarrow M$ that maps $R^n_n$ into $R^{n+1}_{n}$. Then we can fix, for all $i = 0 \ldots n$, $R^{n+1}_i$ as $\mathfrak h[R^n_i]$ and $S^{n+1}_i$ as $\mathfrak h[S^n_i]$. 

			As required, we will have that $S^{n+1}_0 \supseteq R^{n+1}_0 \supseteq S^{n+1}_1 \supseteq \ldots S^{n+1}_n \supseteq R^{n+1}_n \supseteq S^{n+1}_{n+1} \supseteq R^{n+1}_{n+1} \supseteq R$, because $\mathfrak h$ preserves inclusions. Additionally, $(M, S^{n+1}_i) \in \cneg \D$ and $(M, R^{n+1}_i) \in \D$ for all $i = 0 \ldots n+1$, as required, since $\D$ and $\cneg \D$ are closed under isomorphisms, and for all $i \in 0 \ldots n$ 
			\[
				S^{n+1}_i = \mathfrak h[S^n_i] = \{\mathfrak h(\tuple m) : M \models \xi_i(\tuple m, \tuple b^n_i)\} = \{\tuple m' : M \models \xi_i(\tuple m', \mathfrak h(\tuple b^n_i))\}
			\]
			and 
			\[
				R^{n+1}_i = \mathfrak h[R^n_i] = \{\mathfrak h(\tuple m) : M \models \theta_i(\tuple m, \tuple a^n_i)\} = \{\tuple m' : M \models \theta_i(\tuple m', \mathfrak h(\tuple a^n_i))\}
			\]
			which implies that the $S^{n+1}_i$ and $R^{n+1}_i$ are still defined by the corresponding formulas $\theta_i(\tuple x, \tuple y)$ and $\xi_i(\tuple x, \tuple z)$ (which fix the identity of $\tuple y$ and $\tuple z$ respectively) and by the tuples $\tuple a^{n+1}_i = \mathfrak h(\tuple a^n_i)$, $\tuple b^{n+1}_i = \mathfrak h(\tuple b^n_i)$.

	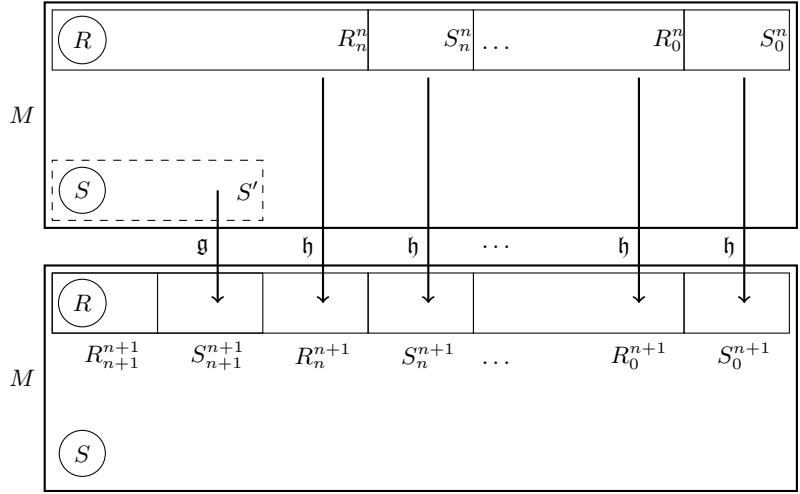
\begin{figure}
		\begin{center}
		\begin{tikzpicture}
			 \draw[black, thick] (-0.5,-2.5) rectangle (9.5,0.5);
			 \node (M) at (-0.8, -1) {$M$};

			 \node[circle,draw,black] (R) at (0,-0) {$R$}; 
			 \node[circle,draw,black] (S) at (0,-2) {$S$}; 

			 \draw[black, dashed] (-0.4,-2.4) rectangle (2.4, -1.6);
			 \node at (2.2,-2) (Sp) {$S'$};

			 \draw[black] (-0.4,-0.4) rectangle (3.8,0.4);
			 \node at (3.6,0) (Rnn) {$R_n^n$};
			 
			 \draw[black] (3.8,0.4) rectangle (5.2,-0.4);
			 \node at (5.0,0) (Snn) {$S_n^n$};
			 \node at (5.5, -0.1) {$\ldots$};

			 \draw[black](5.2,0.4) rectangle (8, -0.4);
			 \node at (7.8, 0) (Rn0){$R_0^n$};
			 
			 \draw[black](8,0.4) rectangle (9.4,-0.4);
			 \node at (9.2,0) (Sn0){$S_0^n$};

			 \draw[black, thick] (-0.5,-6) rectangle (9.5,-3);
			 \node (M2) at (-0.8, -4.5) {$M$};

			 \node[circle,draw,black] (R) at (0,-3.5) {$R$}; 
			 \node[circle,draw,black] (S) at (0,-5.5) {$S$}; 

			 \draw[black] (-0.4,-3.9) rectangle (1,-3.1);
			 \node at (0.4,-4.2) (Rn1n1) {$R_{n+1}^{n+1}$};

			 \draw[black] (1,-3.9) rectangle (2.4,-3.1);
			 \node at (1.8,-4.2) (Sn1n1) {$S_{n+1}^{n+1}$};

			 \draw[black] (-0.4,-3.9) rectangle (3.8,-3.1);
			 \node at (3.2,-4.2) (Rn1n) {$R_n^{n+1}$};
			 
			 \draw[black] (3.8,-3.1) rectangle (5.2,-3.9);
			 \node at (4.6,-4.2) (Sn1n) {$S_n^{n+1}$};
			 \node at (5.5, -4.3) {$\ldots$};

			 \draw[black](5.2,-3.1) rectangle (8, -3.9);
			 \node at (7.4, -4.2) (Rn10){$R_0^{n+1}$};
			 
			 \draw[black](8,-3.1) rectangle (9.4,-3.9);
			 \node at (8.8,-4.2) (Sn10){$S_0^{n+1}$};

			 \draw[black, thick, ->] (1.8,-2) to node[anchor=east]{$\mathfrak g$} (1.8,-3.5);

			 \draw[black, thick, ->] (3.2,-0.5) to (3.2,-3.5);
			 \node at (3.0,-2.75){$\mathfrak h$};

			 \draw[black, thick, ->] (4.6,-0.5) to (4.6,-3.5);
			 \node at (4.4,-2.75){$\mathfrak h$};

			 \node at (5.5, -2.75) {$\ldots$};

			\draw[black, thick, ->] (7.4,-0.5) to (7.4,-3.5);
			\node at (7.2,-2.75){$\mathfrak h$};

			 \draw[black, thick, ->] (8.8,-0.5) to (8.8,-3.5);
			 \node at (8.6,-2.75){$\mathfrak h$};

		 \end{tikzpicture}
		 \end{center}
				\caption{The induction case. $M$ is showed twice to avoid cluttering the figure too much. $R$ is contained in some set defined by $\theta_n$ (i.e. $R^n_n$), therefore so is $S$; and so there exists some maximal $S' \supseteq S$ that satisfies $\cneg \D$ and is contained in some relation defined by $\theta_n$. This $S'$ is defined by some $\xi_{n+1}$. So $R$ is also contained in some relation defined by $\xi_{n+1}$, and there is a maximal $R^{n+1}_{n+1} \supseteq R$  -- defined by some $\theta_{n+1}$ -- that satisfies $\D$, and is contained in some relation $S^{n+1}_{n+1}$ defined by $\xi_{n+1}$. $S^{n+1}_{n+1}$ must be isomorphic to $S'$ via some $\mathfrak g$, because they are both defined by $\xi_{n+1}$, and so it must satisfy $\cneg \D$. Like $S'$, $S^{n+1}_{n+1}$ must be contained in some relation $R^{n+1}_n$ defined by $\theta_n$; and therefore there must be some isomorphism $\mathfrak h$ between $R^n_n$ and $R^{n+1}_n$. So $R^{n+1}_n$ is in $\D$, and we can use $\mathfrak h$ to fix $R^{n+1}_i = \mathfrak h[R^n_i]$ and $S^{n+1}_i = \mathfrak h[S^n_i]$ for all $i=0 \ldots n$.}
		 \label{fig:induct_case}
	 \end{figure}

	\end{description}
	Finally, we can use Lemma \ref{lemma:nostair} to conclude that one among $\D$ and $\cneg \D$ is not strongly first order. Indeed, consider the theory 
	\[
		U = \{\forall \tuple x (P_i \tuple x \rightarrow Q_i \tuple x) \wedge (Q_i \tuple x \rightarrow P_{i+1} \tuple x) : i \in \mathbb N\} \cup \{\D(P_i), \lnot \D(Q_i)\} : i \in \mathbb N\}
	\]
	that states that there is an infinite \emph{ascending} chain $P_0 \subseteq Q_0 \subseteq P_1 \subseteq Q_i \subseteq \ldots$ of relations satisfying alternatively $\D$ and $\lnot \D$. 

	Then $U$ is finitely satisfiable: indeed, for any finite subset $U_f$ of $U$, if $n$ is the highest index for which $P_n$ or $Q_n$ appear in $U_f$, the model with domain $M$ in which $P_0 \ldots P_n$ are interpreted as $R^n_n \ldots R^n_0$ (note the inverse order) and $Q_0 \ldots Q_n$ are interpreted as $S^n_n \ldots S^n_0$ (likewise in inverse order) satisfies $U_f$, since $P_i = R_{n-i} \subseteq S_{n-i} = Q_i$ and $Q_i = S_{n-i} \subseteq R_{n-i-1} = P_{i+1}$.
	
	Therefore, there is a model $\mathfrak N$ that satisfies $U$; and by Lemma \ref{lemma:nostair}, this implies that at least one between $\D$ and $\cneg \D$ is not strongly first order. 
\end{proof}

The next two lemmas will be used in our translation of relativizable, doubly strongly first order dependencies: 
\begin{lemma}
	For all first order models $\M$ with domain $M$, teams $X$ over $M$, and formulas $\theta(\tuple v, \tuple y)$ over the empty signature with $\tuple v$ contained in the variables of $X$, 
	\[
		\M \models_X \exists \tuple y (\const[\tuple y] \wedge \theta (\tuple v, \tuple y)) \Leftrightarrow (M, R:= X(\tuple v)) \models \exists \tuple y \forall \tuple x(R \tuple x \rightarrow \theta(\tuple x, \tuple y)).
	\]
	\label{lemma:exists_theta}
\end{lemma}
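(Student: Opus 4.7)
The plan is to prove both directions of the biconditional by combining Proposition \ref{propo:exists1}, which handles the interaction of existential quantification with constancy atoms, and Proposition \ref{propo:flat}, the flatness property for first-order formulas under Team Semantics. The key observation is that $\exists \tuple y (\const[\tuple y] \wedge \theta(\tuple v, \tuple y))$ amounts to choosing a single tuple $\tuple m$ of values for $\tuple y$ that works simultaneously for every assignment in $X$, while the right-hand side says exactly that there is a tuple witnessing $\tuple y$ such that every $\tuple x \in R = X(\tuple v)$ satisfies $\theta(\tuple x, \tuple y)$.

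For the forward direction, suppose $\M \models_X \exists \tuple y (\const[\tuple y] \wedge \theta(\tuple v, \tuple y))$. By Proposition \ref{propo:exists1}, there exists some $\tuple m \in M^{|\tuple y|}$ such that $\M \models_{X[\tuple m/\tuple y]} \theta(\tuple v, \tuple y)$. Since $\theta$ is first order, Proposition \ref{propo:flat} then yields $\M \models_s \theta(\tuple v, \tuple y)$ in the Tarskian sense for every $s \in X[\tuple m/\tuple y]$, i.e., $\M \models \theta(s(\tuple v), \tuple m)$ for every $s \in X$. Setting $R := X(\tuple v)$, this says that every $\tuple x \in R$ satisfies $\theta(\tuple x, \tuple m)$, so $\tuple m$ witnesses $\tuple y$ in $\exists \tuple y \forall \tuple x(R \tuple x \rightarrow \theta(\tuple x, \tuple y))$.

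The converse is just this argument in reverse: if some $\tuple m$ witnesses the right-hand side, then $\M \models \theta(\tuple x, \tuple m)$ for every $\tuple x \in X(\tuple v)$, hence $\M \models \theta(s(\tuple v), \tuple m)$ for every $s \in X$; by flatness $\M \models_{X[\tuple m/\tuple y]} \theta(\tuple v, \tuple y)$, and Proposition \ref{propo:exists1} gives $\M \models_X \exists \tuple y (\const[\tuple y] \wedge \theta(\tuple v, \tuple y))$. I do not expect any real obstacle; the only subtlety is the empty-team edge case $X = \emptyset$, where $R = \emptyset$ makes the right-hand side vacuously true and the empty team satisfies the left-hand side trivially (with the empty choice function for $\tuple y$). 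Otherwise the argument is a direct unpacking of the definitions and a single appeal to each of the two cited propositions.
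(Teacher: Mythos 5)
Your proof is correct and follows essentially the same route as the paper's: both directions are obtained by applying Proposition \ref{propo:exists1} to reduce the existential quantifier with the constancy atom to a single witnessing tuple, and then Proposition \ref{propo:flat} to pass between team satisfaction and pointwise Tarskian satisfaction of $\theta$. The empty-team remark is a harmless extra observation; the paper's argument covers that case implicitly.
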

\begin{proof}
	Suppose that $\M \models_X \exists \tuple y(\const[\tuple y] \wedge \theta(\tuple v, \tuple y))$. Then by Proposition \ref{propo:exists1} there exists a tuple $\tuple a \in M^{|\tuple y|}$ such that $\M \models_{X[\tuple a/\tuple y]} \theta(\tuple v, \tuple y)$. Thus, by Proposition \ref{propo:flat}, for every assignment $s[\tuple a / \tuple y] \in X[\tuple a/\tuple y]$ we have that $\M \models_{s[\tuple a/\tuple y]} \theta(\tuple v, \tuple y)$ in the ordinary Tarskian sense. Therefore, for every $\tuple m \in X(\tuple v)$ we have that $M \models \theta(\tuple m, \tuple a)$, and therefore $(M, R:=X(\tuple v)) \models \exists y \forall x (R \tuple x \rightarrow \theta(\tuple x, \tuple y))$ as required. 

	Conversely, suppose that $(M, R:=X(\tuple v)) \models \exists \tuple y \forall \tuple x (R \tuple x \rightarrow \theta(\tuple x, \tuple y))$. Then there exists some tuple of elements $\tuple a$ such that, for all $\tuple m \in X(\tuple v)$, $M \models \theta(\tuple m, \tuple a)$. But then $\M \models_{X[\tuple a/\tuple y]} \const[\tuple y]$ and, again by Proposition \ref{propo:flat}, $\M \models_{X[\tuple a/\tuple y]} \theta(\tuple v, \tuple y)$, and so $\M \models_X \exists \tuple y(\const[\tuple y] \wedge \theta(\tuple v, \tuple y))$ as required.
\end{proof}
\begin{lemma}
	For all first order models $\M$ with domain $M$, teams $X$ over $\M$, and formulas $\xi(\tuple v, \tuple z)$ over the empty signature with $\tuple v$ contained in the free variables of $X$,
	\[
		\M \models_X  \top \vee \exists \tuple z (\All(\tuple z) \wedge \lnot \xi(\tuple v, \tuple z)) \Leftrightarrow (M, R:=X(\tuple v)) \models \lnot \exists \tuple z \forall \tuple x (R \tuple x \rightarrow \xi(\tuple x, \tuple z)).
	\]
	\label{lemma:notexists_xi}
\end{lemma}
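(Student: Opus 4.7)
The plan is to unfold the left-hand side via the semantic rules and observe that the resulting combinatorial condition on $X$ matches exactly what the right-hand side asserts about $X(\tuple v)$, using flatness (Proposition \ref{propo:flat}) to handle the first-order formula $\xi$.

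First I would apply TS-$\vee$ to the left-hand side: since $\M \models_Y \top$ for every $Y \subseteq X$, the condition is equivalent to the existence of some $Z \subseteq X$ satisfying the right disjunct (then $Y = X$ makes $X = Y \cup Z$ trivially). Applying TS-$\exists$-vec (Proposition \ref{propo:quant-vec}) to $Z$ yields a choice function $H : Z \to \mathcal P(M^{|\tuple z|}) \setminus \{\emptyset\}$ such that, writing $Z' = Z[H/\tuple z]$, we have $\M \models_{Z'} \All(\tuple z) \wedge \lnot \xi(\tuple v, \tuple z)$. The totality atom forces $Z'(\tuple z) = \bigcup_{s \in Z} H(s) = M^{|\tuple z|}$, while flatness applied to the first-order literal $\lnot \xi$ gives, for all $s \in Z$ and all $\tuple a \in H(s)$, that $M \not\models \xi(s(\tuple v), \tuple a)$. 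Combining these two conditions gives exactly: for every $\tuple a \in M^{|\tuple z|}$ there is some $\tuple m \in X(\tuple v)$ with $M \not\models \xi(\tuple m, \tuple a)$, which, setting $R := X(\tuple v)$, is the right-hand side in negation-normal form.

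For the reverse implication, given the right-hand side, I would pick for each $\tuple a \in M^{|\tuple z|}$ some $\tuple m_{\tuple a} \in X(\tuple v)$ with $M \not\models \xi(\tuple m_{\tuple a}, \tuple a)$ together with some $s_{\tuple a} \in X$ satisfying $s_{\tuple a}(\tuple v) = \tuple m_{\tuple a}$, then set $Z = \{s_{\tuple a} : \tuple a \in M^{|\tuple z|}\}$ and $H(s) = \{\tuple a : s_{\tuple a} = s\}$; each $H(s)$ is nonempty by construction, and the two combinatorial conditions above hold by design, closing the argument via $Y = X$. I do not expect any serious obstacle; the main points of care are the nonemptiness of each $H(s)$ (automatic, as every $s \in Z$ arose as some $s_{\tuple a}$) and the degenerate case $X = \emptyset$, where both sides fail trivially (the right-hand side because $\exists \tuple z \forall \tuple x(R \tuple x \rightarrow \xi(\tuple x, \tuple z))$ is vacuously true for $R = \emptyset$, and the left-hand side because the empty team cannot satisfy $\All(\tuple z)$ when $M$ is nonempty). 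The schema mirrors Lemma \ref{lemma:exists_theta}, with the constancy atom replaced by totality and an extra $\top \vee$ to accommodate the dualized quantifier pattern.
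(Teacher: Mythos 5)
Your proof is correct and follows essentially the same route as the paper's: unfold \textbf{TS-$\vee$} (absorbing the split via $\top$) and \textbf{TS-$\exists$}, use totality to see that the chosen values of $\tuple z$ exhaust $M^{|\tuple z|}$, and apply Proposition \ref{propo:flat} to $\lnot \xi$ (which is a first-order \emph{formula} rather than a literal, but flatness applies regardless). The only cosmetic difference is in the converse direction, where you pick one witness assignment $s_{\tuple a}$ per tuple $\tuple a$ while the paper instead lets $H(s)$ be the full set of counterexample tuples for each relevant $s$; both yield the required team.
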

\begin{proof}
	Suppose that $\M \models_X \top \vee \exists \tuple z (\All(\tuple z) \wedge \lnot \xi(\tuple v, \tuple z))$. Then $X = X_1 \cup X_2$ for two $X_1, X_2 \subseteq X$ and there exists a function $H: X_2 \rightarrow \mathcal P(M^{|\tuple z|})$ such that, for $Y = X_2[H/\tuple z]$, $\M \models_{Y} \All(\tuple z) \wedge \lnot \xi(\tuple v, \tuple z)$. Now take any $\tuple b \in M^{|\tuple z|}$. Since $\M \models_{Y} \All(\tuple z)$, there will be some assignment $s \in Y$ such that $s(\tuple z) = \tuple b$; and since $\M \models_Y \lnot \xi(\tuple v, \tuple z)$, for $\tuple m = s(\tuple v) \in Y(\tuple v) = X_2(\tuple v) \subseteq X(\tuple v)$ we have that $M \not \models \xi(\tuple m, \tuple b)$. Therefore, for every $\tuple b \in M^{|\tuple z|}$ there exists some $\tuple m \in X(\tuple v)$ such that $\lnot \xi(\tuple m, \tuple b)$, and hence $(M, R:=X(\tuple v)) \models \lnot \exists \tuple z \forall \tuple x (R \tuple x \rightarrow \xi(\tuple x, \tuple z))$. 

	Conversely, suppose that $(M, R:=X(\tuple v)) \models \lnot \exists \tuple z \forall \tuple x (R \tuple v \rightarrow \xi(\tuple x, \tuple z))$. Then for every tuple $\tuple b \in M^{|\tuple z|}$ there exists some tuple $\tuple m \in X(\tuple v)$ such that $M \not \models \xi(\tuple m, \tuple b)$, and by the definition of $X(\tuple v)$ we will have that this $\tuple m$ is $s(\tuple v)$ for some assignment in $X$. Then let $X = X_1 \cup X_2$, where $X_2 = \{s \in X :  \exists \tuple b \in M^{|\tuple z|} \text{ s.t. } M \not \models \xi(s(\tuple v), \tuple b)\}$ and $X_1 = X \backslash X_2$. Of course $\M \models_{X_1} \top$; and if we choose the function $H: X_2 \rightarrow \mathcal P(M^{|\tuple z|})$ so that $H(s) = \{\tuple b : M \not \models \xi(s(\tuple v), \tuple b)\}$ for all $s \in X_2$, we have that $H(s) \not = \emptyset$ for all $s \in X_2$, by the definition of $X_2$; that $\M \models_{X[H/\tuple z]} \All(\tuple z)$, since for every possible choice of $\tuple b$ we know that there is some $s \in X$ such that $\lnot \xi(s(\tuple v), \tuple b)$, this $s$ will be assigned to $X_2$ by its definition, and then by the definition of $H$ we will have that $\tuple b \in H(s)$; and that $\M \models_{X_2[H/\tuple z]} \lnot \xi(\tuple v, \tuple z)$, because of Proposition \ref{propo:flat} and of the definition of $H$.
\end{proof}

We now all have all the ingredients to give a complete characterization of doubly strongly first order relativizable dependencies:
\begin{theorem}
	Let $\D(R)$ be a relativizable first order dependency. Then the following are equivalent: 
	
	\begin{enumerate}[label=\roman*)]
		\item $\D$ is equivalent to some sentence of the form
	\begin{equation}
		\bigvee_{k=1}^l \left( \psi_k \wedge
		\bigwedge_{i=1}^{n_k} \exists \tuple y^k_i (\forall \tuple x (R \tuple x \rightarrow \theta^k_i(\tuple x, \tuple y^k_i))) \wedge \bigwedge_{j=1}^{n'_k} \lnot \exists \tuple z^k_{j} (\forall \tuple x(R \tuple x \rightarrow \xi^k_j(\tuple x, \tuple z^k_j)))
		\right)
		\label{eq:DSFO_NF}
	\end{equation}
	where all the $\psi_k$ are first order sentences over the empty vocabulary and all the $\theta_i^k$ and the $\xi_j^k$ are first order formulas over the empty vocabulary; 
	\item Both $\D$ and $\cneg \D$ are definable in $\FO(\DD_0, \sqcup, \const, \All)$; 
	\item $\D$ is doubly strongly first order. 
	\item Both $\D$ and $\cneg \D$ are strongly first order.
	\end{enumerate}
	\label{thm:doublystrongly}
\end{theorem}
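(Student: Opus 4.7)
The plan is to establish the cycle $(i) \Rightarrow (ii) \Rightarrow (iii) \Rightarrow (iv) \Rightarrow (i)$. The step $(iii) \Rightarrow (iv)$ is immediate, since every sentence of $\FO(\D)$ is already a sentence of $\FO(\D, \cneg\D)$ (and likewise for $\cneg\D$), so a strongly first order $\{\D,\cneg\D\}$ forces $\D$ and $\cneg\D$ to be strongly first order separately. The step $(ii) \Rightarrow (iii)$ is also quick: given any sentence $\phi \in \FO(\D,\cneg\D)$, replace each occurrence of $\D\tuple x$ and $\cneg\D\tuple x$ by its defining $\FO(\DD_0,\sqcup,\const,\All)$-formula (the relativizability hypothesis ensures this substitution is well behaved even when the atoms occur inside subformulas), producing a sentence of $\FO(\DD_0,\sqcup,\const,\All)$ which by Corollary \ref{coro:to_FO} is equivalent to a first order sentence.

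For $(i) \Rightarrow (ii)$, I would translate the normal form (\ref{eq:DSFO_NF}) directly, disjunct by disjunct. The outer disjunction over $k$ becomes a $\sqcup$; each $\psi_k$ (over the empty signature) is expressible as a $0$-ary atom in $\DD_0$; each positive conjunct $\exists\tuple y_i^k\,\forall\tuple x(R\tuple x \to \theta_i^k(\tuple x,\tuple y_i^k))$ becomes $\exists\tuple y_i^k(\const[\tuple y_i^k]\wedge\theta_i^k(\tuple v,\tuple y_i^k))$ by Lemma \ref{lemma:exists_theta}; and each negative conjunct $\lnot\exists\tuple z_j^k\,\forall\tuple x(R\tuple x \to \xi_j^k(\tuple x,\tuple z_j^k))$ becomes $\top \vee \exists\tuple z_j^k(\All(\tuple z_j^k)\wedge\lnot\xi_j^k(\tuple v,\tuple z_j^k))$ by Lemma \ref{lemma:notexists_xi}. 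For $\cneg\D$, I would first observe that the class of sentences of shape (\ref{eq:DSFO_NF}) is closed under negation modulo first order equivalence: applying De Morgan to $\lnot$ of (\ref{eq:DSFO_NF}) and distributing $\wedge$ over $\vee$ yields a disjunction of conjunctions in which the roles of the $\theta_i^k$ and $\xi_j^k$ are swapped and the sentential parts $\lnot\psi_k$ are gathered into a single sentence over the empty signature. Then the same translation applies.

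The real content is $(iv) \Rightarrow (i)$, but most of the labour has already been absorbed into Lemma \ref{lemma:eta_M}, which hands us, for every countable $\M = (M,R) \in \D$, a conjunctive sentence $\eta_\M$ of exactly the shape appearing inside the outer disjunction of (\ref{eq:DSFO_NF}) such that $\M \models \eta_\M$ and $\eta_\M \models \D(R)$. Let $\Sigma$ be the set of all such conjunctive sentences that entail $\D(R)$, and consider the first order theory $T = \{\D(R)\} \cup \{\lnot\eta : \eta \in \Sigma\}$. If $T$ were consistent, Löwenheim–Skolem would give a countable model $(M,R) \models T$; since $(M,R) \models \D(R)$ we have $(M,R) \in \D$, and Lemma \ref{lemma:eta_M} produces some $\eta \in \Sigma$ with $(M,R)\models\eta$, contradicting $\lnot\eta \in T$. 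Hence $T$ is inconsistent and compactness delivers finitely many $\eta_1,\ldots,\eta_l \in \Sigma$ with $\D(R) \models \eta_1 \vee \cdots \vee \eta_l$; combined with $\eta_i \models \D(R)$ for each $i$ this gives equivalence, which is precisely (\ref{eq:DSFO_NF}).

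The main obstacle has already been overcome in Lemma \ref{lemma:eta_M} (which in turn relies on the staircase obstruction of Lemma \ref{lemma:nostair}); once that local normal-form result is available, the final theorem is essentially a Löwenheim–Skolem plus compactness wrap-up, and the translations between Team Semantics and first order relational conditions are handled cleanly by Lemmas \ref{lemma:exists_theta} and \ref{lemma:notexists_xi}.
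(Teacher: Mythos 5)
Your proposal is correct and follows essentially the same route as the paper: the same cycle of implications, the same translations via Lemmas \ref{lemma:exists_theta} and \ref{lemma:notexists_xi}, and the same L\"owenheim--Skolem-plus-compactness argument built on Lemma \ref{lemma:eta_M}. The only (harmless) cosmetic differences are that you obtain the translation of $\cneg\D$ by first closing the normal form (\ref{eq:DSFO_NF}) under negation via De Morgan rather than writing the dual $\FO(\DD_0,\sqcup,\const,\All)$ formula directly, and that your theory $T$ negates all shape-conforming sentences entailing $\D(R)$ rather than only the $\eta_\M$ produced by Lemma \ref{lemma:eta_M}; also, relativizability plays no role in the substitution step of ii) $\Rightarrow$ iii) (that step needs only definability and compositionality).
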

\begin{proof}
	Let us show that i) implies ii), that ii) implies iii), that iii) implies iv), and finally that iv) implies i):
	\begin{description}
		\item [i) $\Rightarrow$ ii)]
			Suppose that $\D(R)$ is in the form of Equation (\ref{eq:DSFO_NF}) and, for each first order sentence $\psi$ over the empty signature $\psi$, let $[\psi] \in \DD_0$ be the $0$-ary first order dependency defined as $[\psi] = \{M : M \models \psi\}$. Then $\D \tuple v$ is equivalent to the $\FO(\DD_0, \sqcup, \const, \All)$ formula
	\[
		\bigsqcup_{k=1}^l \left( [\psi_k] \wedge 
		\bigwedge_{i=1}^{n_k} \exists \tuple y^k_i (\const[\tuple y^k_i] \wedge \theta^k_i (\tuple v, \tuple y^k_i)) \wedge \bigwedge_{j=1}^{n'_k} (\top \vee \exists \tuple z^k_j (\All(\tuple z^k_j) \wedge \lnot \xi^k_j(\tuple v, \tuple z^k_j)))
		\right) 
	\]
			where, up to renaming, we can assume that $\tuple v$ is disjoint from all the $\tuple y^k_i$ and $\tuple z^k_j$ and where each $\lnot \xi^k_j$ stands for the corresponding expression in Negation Normal Form. 
			Indeed, by Lemmas \ref{lemma:exists_theta} and \ref{lemma:notexists_xi} -- as well as the rules for conjunction and global disjunction -- the above expression is satisfied by a team $X$ in a model $\M$ if and only if there exists some $k \in 1 \ldots l$ such that 
			\begin{enumerate}
				\item $M \models \psi_k$; 
				\item For all $i \in 1 \ldots n_k$, $(M, X(\tuple v)) \models \exists \tuple y^k_i  \forall \tuple x(R \tuple x \rightarrow \theta^k_i(\tuple x, \tuple y^k_i))$; 
				\item For all $j \in 1 \ldots n'_k$, $(M, X(\tuple v)) \models \lnot \exists \tuple z^k_j \forall \tuple x(R \tuple x \rightarrow \xi^k_j(\tuple x, \tuple z^k_j))$. 
			\end{enumerate}

			These are precisely the conditions for Equation (\ref{eq:DSFO_NF}) to be true in $(M, X(\tuple v))$, that is, for it to be the case that $\M \models_X \D \tuple v$. 

			%Indeed, suppose that $\M \models_X \D \tuple v$. Then, by hypothesis, there exists some $k$ such that 
	%\[
	%	(M, X(\tuple v)) \models \psi_k \wedge \bigwedge_{i=1}^{n_k} \exists \tuple y^k_i (\forall \tuple x (R \tuple x \rightarrow \theta^k_i(\tuple x, \tuple y^k_i))) \wedge \bigwedge_{j=1}^{n'_k} \lnot \exists \tuple z^k_{j} (\forall \tuple x(R \tuple x \rightarrow \xi^k_j(\tuple x, \tuple z^k_j)))
	%\]
	%		Then $M \models \psi_k$, and hence $\M \models_X [\psi_k]$. Furthermore, for each $i \in 1 \ldots n_k$, there exists some tuple $\tuple a$ of elements such that $(M, X(\tuple v)) \models \forall \tuple x (R \tuple x \rightarrow \theta^k_i(\tuple x, \tuple a))$: therefore, by choosing this $\tuple a$ as for the same value for $\tuple y^k_i$ we have that $\M \models_{X[\tuple m/\tuple y^k_i]} \const[\tuple y^k_i]$ and that $\M \models_{X[\tuple m/\tuple y^k_i]} \theta^k_i(\tuple v, \tuple y^k_i)$ by Proposition [REF]. Also, for each $j \in 1 \ldots n'_k$ and for every choice of $\tuple b$ we have that $(M, X(\tuple v)) \models \lnot \forall \tuple x(R \tuple x \rightarrow \xi^k_j(\tuple x, \tuple z^k_j))$, i.e., there exists some element $\tuple m_k \in X(\tuple v)$ such that $M \not \models \xi^k_j(\tuple m, \tuple b)$. Then, let our choice function $H$ BLABLABLA EMPTY SET AAARGH
	%		
	Likewise, $\cneg \D \tuple v$ is equivalent to 
	\[
		\bigwedge_{k=1}^l \left(
			[\lnot \psi_k] \sqcup 
			\bigsqcup_{i=1}^{n_k} (\top \vee \exists \tuple y^k_i (\All(\tuple y^k_i) \wedge \lnot \theta^k_i(\tuple v, \tuple y^k_i))) \sqcup \bigsqcup_{j=1}^{n'_k} \exists \tuple z^k_j (\const[\tuple z^k_j] \wedge \xi^k_j(\tuple v, \tuple z^k_j))
			\right).
	\]

	Therefore, both $\D$ and $\cneg \D$ are indeed definable in $\FO(\DD_0, \sqcup, \const, \All)$. 
\item [ii) $\Rightarrow$ iii)]
	Since both $\D$ and $\cneg \D$ are definable in $\FO(\DD_0, \sqcup, \const, \All)$, every sentence of $\FO(\D, \cneg \D)$ is equivalent to some sentence of $\FO(\DD_0, \sqcup, \const, \All)$ and therefore -- by Corollary \ref{coro:to_FO} -- to some sentence of $\FO$. Therefore, $\D$ is doubly strongly first order. 
\item  [iii) $\Rightarrow$ iv)] 
	Obvious, since $\FO(\D),\FO(\cneg \D) \preceq \FO(\D, \cneg \D) \equiv \FO$.
	\item [iv) $\Rightarrow$ i)] 
		Suppose that both $\D$ and $\cneg \D$ are strongly first order. Then, by Lemma \ref{lemma:eta_M}, for every countable $\M = (M, R) \in \D$ there exists some first order sentence $\eta_\M$ of the form of Equation (\ref{eq:eta_M}) such that $\M \models \eta_\M$ and that $\eta_\M \models \D(R)$. Now consider the first order theory 
			\[
				T = \{\lnot \eta_\M : \M \in \D, \M \mbox{ is countable}\} \cup \{\D(R)\}
			\]
			Then $T$ is unsatisfiable: indeed, if it had a model then by the L\"owenheim-Skolem Theorem it would have a countable model $\M = (M, R)$, but this is impossible because we would have that $\M \in \D$ (since $\D(R) \in T$) and thus $\M \models \eta_\M$, despite the fact that $\lnot \eta_\M \in T$. Therefore, there is a finite subset of it that is unsatisfiable, or, in other words, we have that 
			\[
				\D(R) \models \bigvee_{k=1}^l \eta_{\M_k}
			\]
			for some finite set $\M_1 \ldots \M_l$ of countable models of $\D$. But each such $\eta_{\M_k}$ entails $\D(R)$, and therefore $\D(R)$ is equivalent to $\bigvee_{k=1}^l \eta_{\M_k}$ which is in the form of Equation (\ref{eq:DSFO_NF}). 
	\end{description}
\end{proof}
As a corollary of the above result, we have that a family of dependencies is doubly strongly first order if and only if every dependency in it is doubly strongly first order: 
\begin{corollary}
Let $\DD$ be a family of relativizable dependencies. Then $\DD$ is doubly strongly first order if and only if every $\D \in \DD$ is doubly strongly first order. 
\end{corollary}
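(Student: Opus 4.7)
The plan is to handle the two directions separately, with the forward direction being essentially trivial and the backward direction following almost immediately from the main theorem combined with Corollary \ref{coro:to_FO}.

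For the forward direction, suppose $\DD$ is doubly strongly first order, i.e.\ $\FO(\DD, \cneg \DD) \equiv \FO$. Then for any fixed $\D \in \DD$, every sentence of $\FO(\D, \cneg \D)$ is already a sentence of $\FO(\DD, \cneg \DD)$ and hence equivalent to some first order sentence, so $\D$ is doubly strongly first order. This step is just noting an inclusion of languages.

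For the backward direction, suppose every $\D \in \DD$ is doubly strongly first order. Applying the equivalence i) $\Leftrightarrow$ ii) of Theorem \ref{thm:doublystrongly} separately to each $\D \in \DD$, we obtain that both $\D$ and $\cneg \D$ are definable in $\FO(\DD_0, \sqcup, \const, \All)$. Crucially, the defining language $\FO(\DD_0, \sqcup, \const, \All)$ is the same uniform language for every $\D \in \DD$ and does not depend on the specific dependency, so we can simultaneously replace every atom $\D \tuple v$ and every atom $(\cneg \D)\tuple v$ occurring in any given sentence of $\FO(\DD, \cneg \DD)$ by its $\FO(\DD_0, \sqcup, \const, \All)$ definition. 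Hence every sentence of $\FO(\DD, \cneg \DD)$ is equivalent to some sentence of $\FO(\DD_0, \sqcup, \const, \All)$.

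The final step is to invoke Corollary \ref{coro:to_FO}, which gives $\FO(\DD_0, \sqcup, \const, \All) \equiv \FO$ on sentences. Chaining these equivalences yields $\FO(\DD, \cneg \DD) \equiv \FO$, i.e.\ $\DD$ is doubly strongly first order. There is no real obstacle here: the work is entirely loaded into Theorem \ref{thm:doublystrongly}, whose point is precisely that the target ``common language'' $\FO(\DD_0, \sqcup, \const, \All)$ is independent of $\D$, so that combining several doubly strongly first order dependencies does not create any phenomenon that cannot be captured inside that single fixed language.
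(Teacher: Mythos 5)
Your proof is correct and follows essentially the same route as the paper: the forward direction by language inclusion, and the backward direction by using Theorem \ref{thm:doublystrongly} to define each $\D$ and $\cneg\D$ in the single uniform language $\FO(\DD_0, \sqcup, \const, \All)$ and then applying Corollary \ref{coro:to_FO}. The only quibble is that you should invoke the implication iii) $\Rightarrow$ ii) of the theorem (from ``doubly strongly first order'' to definability) rather than ``i) $\Leftrightarrow$ ii)'', but since all four conditions are proved equivalent this is immaterial.
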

\begin{proof}
	If $\DD$ is doubly strongly first order and $\D \in \DD$, every sentence of $\FO(\D, \cneg \D)$ will also be a sentence of $\FO(\DD, \cneg \DD) \equiv \FO$, and therefore $\D$ is doubly strongly first order. 

	Conversely, suppose that every $\D \in \DD$ is doubly strongly first order and relativizable. Then by Theorem \ref{thm:doublystrongly}, for every $\D \in \DD$ both $\D$ and $\cneg \D$ are definable in $\FO(\DD_0, \sqcup, \const, \All)$. Therefore, every sentence of $\FO(\DD, \cneg \DD)$ is equivalent to some sentence of $\FO(\DD_0, \sqcup, \const, \All)$, and thus -- because of Corollary \ref{coro:to_FO} -- to some first order sentence. Therefore, $\DD$ is doubly strongly first order.
\end{proof}
\section{Conclusions and Further Work}
In this work I provided a complete characterization of the relativizable dependencies that are doubly strongly first order, in the sense that they and their negations can be jointly added to First Order Logic with Team Semantics without increasing its expressive power. It was also shown that these dependencies are the same as the ones that are safe for First Order Logic plus the contradictory negation operator restricted to first order literals and dependency atoms; that they are exactly the ones for which both they and their negations can be \emph{separately} added to First Order Logic with Team Semantics without increasing its expressive power; and that a family of dependencies is doubly strongly first order if and only if every dependency in it is individually doubly strongly first order. 

There are several directions in which this work could be extended. One, for example, could consist in studying more in depth the notion of relativizability, proving that all strongly first order / doubly strongly first order dependencies are relativizable or finding a counterexample, in order to rid the main result of this work (as well as the characterizations of the downwards closed dependencies and non-jumping dependencies that are strongly first order) from the requirement of relativizability or prove its necessity. 

Another direction would be to consider the problem of characterizing safe dependencies in other extensions of First Order Logic based on Team Semantics. In order to further pursue this, it would be useful to develop a classification of Team Semantics-based connectives to mirror and extend the classification of dependencies in various categories (downwards closed, upwards closed, union closed, \ldots) that has proven fruitful in the study of dependencies. It is important to remark, in this context, that even though much of the work in this direction so far has taken for granted the presence of the ``standard'' connectives of First Order Logic (with the Team Semantics rules arising for them through the Game-Theoretic Semantics) there is no pressing reason why that should be the case: as the case of \textbf{FOT} logic \cite{kontinen2019logics} shows, logics that eschew them may also be well worth investigating. 

The problem of characterizing strongly first order dependencies remains a major open questions in this area of research, of course, not only because of its own intrinsic interest but also because it constitutes a useful testbed of our understanding of Team Semantics-based extensions of First Order Logic. 

Finally, quantitative variants of Team Semantics such as the ones investigated in \cite{durand2018approximation,durand2018probabilistic,hannula2019facets,hannula2020descriptive} have been recently attracted much interest, both because of their potential applications and because of their connections to problems in metafinite model theory. The same question about the classification of the logics based on them that have been discussed in the case of ``classical'' Team Semantics could be asked for these semantics, and their answer could do much to illuminate the properties of these more semantics and clarify the advantages and drawbacks of logics built on them.  
%
%
% BibTeX users should specify bibliography style 'splncs04'.
% References will then be sorted and formatted in the correct style.
%
 \bibliographystyle{splncs04}
 \bibliography{biblio}
\end{document}